\theoremstyle{plain}
\newtheorem*{Thm-A}{Theorem A}
\newtheorem*{Thm-B}{Theorem B}
\newtheorem{thm}{Theorem}[section]
\newtheorem{cor}[thm]{Corollary}
\newtheorem{lem}[thm]{Lemma}
\newtheorem{prop}[thm]{Proposition}
\newtheorem{conven}{Convention}
\theoremstyle{definition}
\newtheorem{defn}[thm]{Definition}
\newtheorem{rem}[thm]{\bf Remark}
\newtheorem{exm}[thm]{Example}
\numberwithin{equation}{section}
\def\Hom{{\rm Hom}}
\def\Ext{{\rm Ext}}
\def\Aut{{\rm Aut}}
\def\co{{\mathcal O}}
\def\vc{{\vec{c}}}
\def\bbX{{\mathbb X}}
\def\bbL{{\mathbb L}}
\begin{document}
\title[Geometric model for weighted projective lines of type $(p,q)$]{Geometric model for weighted projective lines of type $(p,q)$}

\author[J. Chen, S. Ruan and H. Zhang] {Jianmin Chen, Shiquan Ruan and Hongxia Zhang$^*$}

\thanks{$^*$ the corresponding author}
\footnote[0]{\textcolor{black}{2020} \textit{Mathematics Subject Classification}. 05C10, 05E10, 16D90, 16G70, 57K20.}
\keywords{Weighted projective line; geometric model; tilting graph; mapping class group}

\dedicatory{}
\commby{}

\begin{abstract}
We give a geometric model for the category of coherent sheaves over the weighted projective line of type $(p,q)$ in terms of an annulus with marked points on its boundary. We establish a bijection between indecomposable sheaves over the weighted projective line and certain homotopy classes of oriented curves in the annulus, and prove that the dimension of extension group between indecomposable sheaves equals to the positive intersection number between the corresponding curves.

By using the geometric model, we provide a combinatorial description for the titling graph of tilting bundles, which is composed by quadrilaterals (or degenerated to a line).
Moreover, we obtain that the automorphism group of the coherent sheaf category is isomorphic to the mapping class group of the marked annulus, and show the compatibility of their actions on the tilting graph of coherent sheaves and on the triangulation of the geometric model respectively. A geometric
description of the perpendicular category with respect to an exceptional sheaf is presented at the end of the paper.
\end{abstract}

\maketitle
\section{Introduction}

Geometric models for categories have been studied by various authors in recent years. For instance, a geometric construction of cluster categories of type $A$ was given by Caldero-Chapton-Schiffler in \cite{CCS2006}, of type $D$ was given by Schiffler in \cite{S08} and of type $A_\infty$ was given by Holm-J{\o}rgensen in \cite{HJ2012}. There is also some progress for geometric models of abelian categories. Baur-Marsh in \cite{BM2012} provided a geometric model for tube categories. In \cite{W2008}, Warkentin established a bijection between string modules over a quiver of affine type $A$ and certain oriented curves in a marked annulus.
By these geometric realizations, many algebraic properties (e.g. the extension dimensions, Auslander-Reiten triangles, Auslander-Reiten sequences) of these categories can be studied in geometric terms. We refer to \cite{BM07,BM08,BS2021,BT2019,DLL19,FST2008,HZZ23,S08,T15} for related topics.

Weighted projective lines and their coherent sheaves categories were introduced by Geigle and Lenzing in \cite{GL87}, in order to give a geometric realization of canonical algebras in the sense of Ringel \cite{Ringel1984}. The study of weighted projective lines has a high contact with many mathematical branches, such as Lie theory \cite{Crawley-Boevey2010,DengRuanXiao2020,DouJiangXiao2012,Schiffmann2004}, representation theory \cite{M2004,CLLR2021, FG383,HR1999} and singularity theory \cite{EbelingPloog2010,Hubner1989,Hubner1996,Lenzing1994,Lenzing1998,Lenzing2011}, which in particular including
the aspects of Arnold's strange duality \cite{EbelingPloog2010,EbelingTakahashi2013} and homological mirror symmetry \cite{Ebeling2003,Scerbak1978}.

By Geigle-Lenzing \cite{GL87}, the coherent sheaf category ${\rm coh}\mbox{-}\mathbb{X}(p,q)$ of type $(p,q)$ is derived equivalent to the finitely generated module category ${\rm mod}\tilde{A}_{p,q}$ of the canonical algebra $\tilde{A}_{p,q}$ (affine type $A$).
Inspired by \cite{BT2019, W2008}, we hope to give a geometric model for the category ${\rm coh}\mbox{-}\mathbb{X}(p,q)$ in terms of a marked annulus.

Let $A_{p,q}$ be an annulus with $p$ marked points on the inner boundary and $q$ marked points on the outer boundary. We establish a bijection between indecomposable sheaves over ${\rm coh}\mbox{-}\mathbb{X}(p,q)$ and certain homotopy classes of oriented curves in the annulus $A_{p,q}$ (see Theorem \ref{corresponding}). Under this correspondence, Auslander-Reiten sequences in ${\rm coh}\mbox{-}\mathbb{X}(p,q)$ can
be realized via elementary moves in $A_{p,q}$ (see Proposition \ref{irreducible morphism}), and the dimension of extension space between two indecomposable coherent sheaves equals to the positive intersection number of the correspondence curves (see Theorem \ref{dimension and positive intersection}). Moreover,
we obtain that the tilting sheaves in ${\rm coh}\mbox{-}\mathbb{X}(p,q)$ are in natural bijection with the triangulations of $A_{p,q}$ (see Theorem \ref{triangulation and tilting}), and the flip of an arc is compatible with the tilting mutation (see Proposition \ref{mutation and flip}).

We point out here that the geometric model given in Theorem \ref{corresponding} has intuitive difference from the geometric realization of ${\rm mod}\tilde{A}_{p,q}$ in \cite{BT2019, W2008}. More precisely, the indecomposable modules in the postprojective (\emph{resp.} preinjective) components of ${\rm mod}\tilde{A}_{p,q}$ correspond to the bridging curves whose orientation is from the outer boundary to the inner boundary (\emph{resp.} from the inner boundary to the outer boundary). However, all line bundles in ${\rm coh}\mbox{-}\mathbb{X}(p,q)$ correspond to the bridging curves whose orientations are from the outer boundary to the inner boundary.

There is another interesting point, since the dimensions of the Hom-space and Ext-space between two indecomposable sheaves have explicit formulas due to the structure of the category ${\rm coh}\mbox{-}\mathbb{X}(p,q)$, Theorem \ref{dimension and positive intersection} makes the positive intersection number of the correspondence curves in $A_{p,q}$ easily calculated. Therefore, it seems that the category of coherent sheaves over $\mathbb{X}(p,q)$ provides a nice categorification model for the annulus $A_{p,q}$.

The geometric model has applications on the automorphism group of ${\rm coh}\mbox{-}\mathbb{X}(p,q)$ and the tilting graph of coherent sheaves. Denote by
$$ \mathcal{T}_{A_{p,q}}:=\{{\rm Triangulations\ of}\ A_{p,q}\} {\text{\quad and \quad}} \mathcal{T}_{\mathbb{X}}:=\{{\rm Tilting \ sheaves \ in} \ {\rm coh}\mbox{-}\mathbb{X}(p,q)\}.$$
Then there is a bijection $\phi:  \mathcal{T}_{A_{p,q}} \rightarrow\mathcal{T}_{\mathbb{X}}$, see (\ref{map}).

 There is an isomorphism $\psi$ between the mapping class group $\mathcal{MG} (A_{p,q})$ of the marked annulus $A_{p,q}$ and the automorphism group $\Aut({\rm coh}\mbox{-}\mathbb{X}(p,q))$ of ${\rm coh}\mbox{-}\mathbb{X}(p,q)$, see \eqref{automor psi}. Any automorphism of ${\rm coh}\mbox{-}\mathbb{X}(p,q)$ preserves tilting sheaves. Hence there is a natural group action of $\Aut({\rm coh}\mbox{-}\mathbb{X}(p,q))$ on $\mathcal{T}_{\mathbb{X}}$.
On the other hand, the mapping class group $\mathcal{MG} (A_{p,q})$ naturally acts on the set of triangulations of $A_{p,q}$.
It turns out that these two actions are compatible.
That is, we have the following commutative diagram, where the commutativity is in the sense of (\ref{compatible of groups action}).
\begin{figure}[h]
\begin{tikzpicture}
\node (1) at (0,1.5) {{\tiny{$\mathcal{MG} (A_{p,q})$}}};
\node (2) at (0,-0.1){{\tiny{$\mathcal{T}_{A_{p,q}}$}}};
\node (3) at (3.2,1.5) {{\tiny{$\Aut({\rm coh}\mbox{-}\mathbb{X}(p,q))$}}};
\node (4) at (3.3,-0.1) {{\tiny{$\mathcal{T}_{\mathbb{X}}$}}};
\draw[->] (-0.1,0.15) arc (255:-75:0.6);
\draw[->] (3.1,0.15) arc (255:-75:0.6);
\draw[->](1) --node[above]{\tiny{$\psi$}}(3);
\draw[->](2) --node[above]{\tiny{$\phi$}}(4);
\end{tikzpicture}
\end{figure}

\begin{thm}\label{compatible}
For any $f\in \mathcal{MG} (A_{p,q})$ and any triangulation $\Gamma$ of $A_{p,q}$, we have
\begin{align}\label{compatible of groups action}
\phi(f(\Gamma))=\psi(f)(\phi(\Gamma))
\end{align}
\end{thm}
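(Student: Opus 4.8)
The plan is to reduce \eqref{compatible of groups action} to a compatibility statement for single arcs, and then to verify that statement on a generating set of the mapping class group.

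First I would unwind both sides. By the construction of $\phi$ in \eqref{map}, a triangulation $\Gamma=\{\gamma_1,\dots,\gamma_n\}$ of $A_{p,q}$ goes to the tilting sheaf $\phi(\Gamma)=\bigoplus_{i=1}^{n}X_{\gamma_i}$, where $X_\gamma$ is the indecomposable sheaf attached to the (canonically oriented) arc $\gamma$ by Theorem \ref{corresponding}. Since $f$ carries $\Gamma$ to $f(\Gamma)=\{f(\gamma_1),\dots,f(\gamma_n)\}$, the left side of \eqref{compatible of groups action} is $\bigoplus_i X_{f(\gamma_i)}$; and since every element of $\Aut(\coh\mathbb{X}(p,q))$ is additive, the right side is $\bigoplus_i \psi(f)(X_{\gamma_i})$. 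So \eqref{compatible of groups action} amounts to the assertion that $\psi(f)(X_\gamma)\cong X_{f(\gamma)}$ for every arc $\gamma$ appearing in a triangulation, equivalently for every indecomposable summand of a tilting sheaf; it is no harder, and more convenient, to establish it for every curve $\gamma$ occurring in the correspondence of Theorem \ref{corresponding}.

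Next I would prove this single-curve compatibility. The set of $f\in\mathcal{MG}(A_{p,q})$ for which it holds is a subgroup: if it holds for $f_1$ and $f_2$ then $\psi(f_1f_2)(X_\gamma)=\psi(f_1)\bigl(\psi(f_2)(X_\gamma)\bigr)=\psi(f_1)(X_{f_2(\gamma)})=X_{(f_1f_2)(\gamma)}$, using that $\psi$ is a group homomorphism and that $\mathcal{MG}(A_{p,q})$ acts on homotopy classes of curves; inverses are similar. Hence it suffices to treat a set of generators of $\mathcal{MG}(A_{p,q})$ — the Dehn twist along the core curve, the two rotations cyclically permuting the marked points on the inner, respectively the outer, boundary, and, when $p=q$, the involution exchanging the two boundary components. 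For each such generator $g$ I would write down the explicit automorphism $\psi(g)$ furnished by \eqref{automor psi} (a shift in the Picard group, respectively the symmetry of $\coh\mathbb{X}(p,q)$ that interchanges its two weighted points) and match it, arc by arc, with the combinatorial effect of $g$ on curves through the explicit dictionary of Theorem \ref{corresponding}. That the resulting object assignment $X_\gamma\mapsto X_{g(\gamma)}$ is genuinely the object part of an autoequivalence — and therefore coincides with $\psi(g)$, since $\psi$ is built through exactly this correspondence — follows from Proposition \ref{irreducible morphism}, which makes the assignment intertwine elementary moves with irreducible morphisms and hence Auslander-Reiten sequences, and from Theorem \ref{dimension and positive intersection} (together with its companion for $\Hom$), which makes it preserve the dimensions of extension spaces.

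A second route, which I would use as a cross-check, is an induction along the flip graph of triangulations of $A_{p,q}$: this graph is connected, $f$ commutes with flips by construction, and $\psi(f)$ commutes with tilting mutation because every automorphism does, while flips correspond to mutations by Proposition \ref{mutation and flip}; thus \eqref{compatible of groups action} for all $\Gamma$ reduces to one fixed base triangulation acted on by the generators of $\mathcal{MG}(A_{p,q})$, i.e. again to the previous step. I expect the real obstacle to be that generator computation, in particular the orientation bookkeeping: $\phi$ depends on a canonical choice of orientation for each arc of a triangulation, consistent with the outer-to-inner orientation carried by all line bundles (as noted in the introduction), and one must check this choice is respected by each generator; the boundary swap in the case $p=q$ reverses the two boundary components, so matching it with the exceptional-point symmetry of $\coh\mathbb{X}(p,q)$ is the most delicate point. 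Granting these identifications, \eqref{compatible of groups action} follows from the chain $\phi(f(\Gamma))=\bigoplus_i X_{f(\gamma_i)}=\bigoplus_i \psi(f)(X_{\gamma_i})=\psi(f)(\phi(\Gamma))$.
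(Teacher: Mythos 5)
Your proposal is correct and follows essentially the same route as the paper: reduce \eqref{compatible of groups action} to the generators $r_1$, $r_2$ (and $\sigma$ when $p=q$) of $\mathcal{MG}(A_{p,q})$ using that $\psi$ is a group homomorphism, and then verify the identity arc by arc via the explicit dictionary \eqref{map} together with the description of $\psi$ on generators in \eqref{automor psi}. The extra machinery you invoke (Proposition \ref{irreducible morphism}, Theorem \ref{dimension and positive intersection}, the flip-graph cross-check, and the Dehn twist as a separate generator, which is already $r_1^p=r_2^q$) is not needed, since the generator computation is a direct comparison of formulas, exactly as the paper carries it out.
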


The \emph{tilting graph} $\mathcal{G}(\mathcal{T}_{\mathbb{X}})$ of ${\rm coh}\mbox{-}\mathbb{X}(p,q)$ has as vertices the isomorphism classes of tilting sheaves in ${\rm coh}\mbox{-}\mathbb{X}(p,q)$, while two vertices are connected by an edge if and only if the associated tilting sheaves differ by precisely one indecomposable direct summand. The full subgraph of $\mathcal{G}(\mathcal{T}_{\mathbb{X}})$  consisting of tilting bundles will be denoted by $\mathcal{G}(\mathcal{T}^{\nu}_{\mathbb{X}})$.

The connectedness of titling graph for weighted projective lines has been investigated widely in the literature through category aspect, see for example \cite{BKL2008, HU2005, GengSF2020, FG383}. However, the explicit shape of the tilting graph is still unknown.
By using the above geometric model, we provide a combinatorial description of $\mathcal{G}(\mathcal{T}^{\nu}_{\mathbb{X}})$.

Let $\Lambda_{(p,q)}$ be a graph with vertices
$$\Lambda_{(p,q)}^0=\{(c_1, \cdots, c_p)\in\mathbb{Z}^{p}|c_1\leq \cdots\leq c_p\leq c_1+q\},$$ and there exists an edge between two vertex $(c_1, \cdots, c_p)$ and $(d_1, \cdots, d_p)$ if and only if $$\sum_{i=1}^{p}|c_i-d_i|=1.$$

\begin{thm}\label{description of tilting graph of vector bundles}
The tilting graph $\mathcal{G}(\mathcal{T}^{\nu}_{\mathbb{X}})$ coincides with the graph $\Lambda_{(p,q)}$.
\end{thm}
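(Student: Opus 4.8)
The plan is to identify tilting bundles in $\coh\mbox{-}\bbX(p,q)$ with lattice points of $\Lambda_{(p,q)}$ via line bundle filtrations, and then to match the edge relations on both sides. First I would recall that every tilting bundle in $\coh\mbox{-}\bbX(p,q)$ has exactly $p+q-1$ (equivalently, rank of the Grothendieck group) indecomposable summands, that all these summands are line bundles (since $(p,q)$ is of domestic/tame type and a tilting bundle in this type consists of line bundles — this follows from the structure theory, or from the geometric model where tilting bundles correspond to triangulations whose arcs are all bridging curves), and that line bundles over $\bbX(p,q)$ are indexed by the rank-one part $\bbL$ of the Picard group, i.e. by $\bbZ\vec{x}_1\oplus\bbZ\vec{x}_2$ modulo the relation $p\vec{x}_1 = q\vec{x}_2 = \vec{c}$. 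Using the normal form $\vec{\ell} = a\vec{c} + b_1\vec{x}_1 + b_2\vec{x}_2$ with $0\le b_i$, I would set up coordinates so that a $p$-tuple $(c_1,\dots,c_p)$ records the "slopes" of $p$ of the summands along the inner boundary while the remaining $q-1$ summands are forced by the structure of the triangulation; the constraint $c_1\le\cdots\le c_p\le c_1+q$ is exactly the condition that these line bundles, together with the canonically determined ones, form a slope-ordered exceptional sequence with no self-extensions.

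Next I would set up the bijection $\Lambda_{(p,q)}^0 \to \mathcal{T}_{\bbX}^\nu$ explicitly. Via the geometric model (Theorem~\ref{corresponding}) a tilting bundle corresponds to a triangulation of $A_{p,q}$ all of whose arcs are bridging curves oriented from the outer to the inner boundary. Such a triangulation is combinatorially a way of connecting the $p$ inner marked points to the $q$ outer marked points by non-crossing bridging arcs; up to the $\bbZ$-worth of winding (the degree shift) this is recorded by how many arcs land at each inner marked point, or equivalently by a weakly increasing integer vector $(c_1,\dots,c_p)$ with $c_p \le c_1+q$ — the interval length $q$ reflecting that going once around the annulus identifies the outer marked points cyclically. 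I would verify this is a bijection by checking that both sides have the same cardinality fiber-wise over the degree-shift $\bbZ$-action and that the assignment is injective, using Theorem~\ref{triangulation and tilting} to avoid re-proving that triangulations biject with tilting sheaves.

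Then I would match the edges. On the graph side, two vertices are adjacent iff $\sum_i |c_i - d_i| = 1$, i.e. they differ in exactly one coordinate by exactly one. On the category side, adjacency in $\mathcal{G}(\mathcal{T}^\nu_{\bbX})$ means the tilting bundles share all but one indecomposable summand; by Proposition~\ref{mutation and flip} this is precisely a flip of one arc in the corresponding triangulation, and I must check that a flip which keeps all arcs bridging (so that we stay among tilting bundles, not general tilting sheaves) is exactly the operation of changing one $c_i$ by $\pm 1$. This requires showing: (a) changing a single $c_i$ by $1$ is always realizable by an arc flip between two bona fide triangulations (existence of the mutation in both directions, which may fail at the "boundary" of $\Lambda_{(p,q)}$ — hence the graph degenerates to a line when $p=1$), and (b) conversely every arc flip among tilting-bundle triangulations changes the vector by a single unit step, never by a larger jump and never altering two coordinates at once. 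The main obstacle I anticipate is precisely this two-way matching of the mutation/flip operation with the unit-step combinatorics: one has to control exactly which flips of a bridging triangulation produce another all-bridging triangulation (as opposed to producing an arc that bounds a "peripheral" region, corresponding to a non-line-bundle summand), and to see that those admissible flips are in bijection with the coordinate moves $c_i \mapsto c_i \pm 1$ subject to keeping the tuple weakly increasing within a window of width $q$. I would handle this by a careful local analysis of the arcs at a single marked point, using the explicit form of $\phi$ in \eqref{map} and the description of line bundles, and I would treat the degenerate case $p=1$ (and symmetrically $q=1$) separately, where $\Lambda_{(1,q)}$ is literally the line $\bbZ$ and the tilting bundles are exactly the pairs $\{\co(\vec\ell), \co(\vec\ell + \vec x_2)\}$-type configurations mutated along a single axis.
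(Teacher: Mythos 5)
Your proposal follows essentially the same route as the paper: use the geometric model to identify a tilting bundle with an all-bridging triangulation of $A_{p,q}$, encode that triangulation by the weakly increasing vector $(c_1,\dots,c_p)$ with $c_p\le c_1+q$ (the paper's Theorem \ref{bijection between tilting bundle and combination vertices}), and then match bundle-mutations with unit coordinate steps by a local analysis of which flips preserve the all-bridging property (the paper carries this out via Propositions \ref{position} and \ref{the bound of index}, closing the two-way matching by checking that the degree of each vertex is $2(p-r(T))$ on both sides). One minor slip: a tilting sheaf in ${\rm coh}\mbox{-}\mathbb{X}(p,q)$ has $p+q$ indecomposable direct summands (the rank of the Grothendieck group), not $p+q-1$; this does not affect your argument.
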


Denote by $\eta$ the bijection from $\Lambda_{(p,q)}$ to $\mathcal{G}(\mathcal{T}^{\nu}_{\mathbb{X}})$ obtained in Theorem \ref{description of tilting graph of vector bundles}.
Let $H_{p,q}=\langle r_1, r_2|r_1r_2=r_2r_1, r_1^{p}=r_2^{q}\rangle$ and
\begin{equation*}
\widetilde{H}_{p,q}=
  \left\{
  \begin{array}{ll}
   H_{p,q}, & p\neq q;\\
   H_{p,q}\times\mathbb{Z}_2, &  p=q.\\
  \end{array}
\right.\end{equation*}
Then $\widetilde{H}_{p,q}$ coincides with the mapping class group $\mathcal{MG} (A_{p,q})$, hence there exists a group isomorphism from $\widetilde{H}_{p,q}$ to $\Aut({\rm coh}\mbox{-}\mathbb{X}(p,q))$, still denoted by $\psi$. We construct an unexpected group action of $\widetilde{H}_{p,q}$ on the graph $\Lambda_{(p,q)}$ (c.f. Proposition \ref{bijective map of vertex such that commutative}), which is compatible with the group action of $\Aut({\rm coh}\mbox{-}\mathbb{X}(p,q))$ on $\mathcal{G}(\mathcal{T}^{\nu}_{\mathbb{X}})$ in the following sense.

\begin{thm}\label{compatible2}
For any $f\in \widetilde{H}_{p,q}$ and any vertex $\nu$ in $\Lambda_{(p,q)}$, we have
\begin{align}\label{compatible of groups action2}
\eta(f(\nu))=\psi(f)(\eta(\nu)).
\end{align}
Consequently, we have the following commutative diagram
\begin{figure}[h]
\begin{tikzpicture}
\node (1) at (0,1.55) {{\tiny{$\widetilde{H}_{p,q}$}}};
\node (2) at (0,-0.1){{\tiny{$\Lambda_{(p,q)}$}}};
\node (3) at (3.38,1.55) {{\tiny{$\Aut({\rm coh}\mbox{-}\mathbb{X}(p,q))$}}};
\node (4) at (3.3,-0.1) {{\tiny{$\mathcal{G}(\mathcal{T}^{\nu}_{\mathbb{X}})$}}};
\draw[->] (-0.1,0.15) arc (255:-75:0.6);
\draw[->] (3.1,0.15) arc (255:-75:0.6);
\draw[->](1) --node[above]{\tiny{$\psi$}}(3);
\draw[->](2) --node[above]{\tiny{$\eta$}}(4);
\end{tikzpicture}
\end{figure}
\end{thm}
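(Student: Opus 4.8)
The plan is to reduce the statement to the already-established compatibility on the level of the full geometric model, namely Theorem \ref{compatible}, by identifying all four objects in the square as restrictions of the objects appearing there. First I would recall that $\Lambda_{(p,q)}$ is identified with $\mathcal{G}(\mathcal{T}^{\nu}_{\mathbb{X}})$ via $\eta$ (Theorem \ref{description of tilting graph of vector bundles}), and that $\mathcal{G}(\mathcal{T}^{\nu}_{\mathbb{X}})$ is the full subgraph of $\mathcal{G}(\mathcal{T}_{\mathbb{X}})$ spanned by the tilting \emph{bundles}. On the geometric side, tilting bundles correspond under $\phi$ (Theorem \ref{triangulation and tilting}) to a distinguished class of triangulations of $A_{p,q}$ — those consisting only of bridging arcs (equivalently, containing no arc with both endpoints on the same boundary component), since the line bundles are exactly the bridging curves oriented from the outer to the inner boundary. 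Thus $\eta$ is nothing but the restriction of $\phi$ to this sub-collection of triangulations, precomposed with the combinatorial identification of such triangulations with the vertex set $\Lambda_{(p,q)}^0$.

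The second step is to check that the $\widetilde{H}_{p,q}$-action on $\Lambda_{(p,q)}$ constructed in Proposition \ref{bijective map of vertex such that commutative} is precisely the restriction of the $\mathcal{MG}(A_{p,q})$-action on $\mathcal{T}_{A_{p,q}}$ under the identification $\widetilde{H}_{p,q}\cong \mathcal{MG}(A_{p,q})$. This requires two sub-points: (i) that every mapping class sends bridging-arc triangulations to bridging-arc triangulations — which holds because a homeomorphism of the annulus either preserves or swaps the two boundary components, and in both cases carries bridging arcs to bridging arcs; and (ii) that the induced permutation of $\Lambda_{(p,q)}^0$ agrees with the one defined combinatorially from the generators $r_1,r_2$ (the Dehn-twist-type elements acting by the shifts $c_i\mapsto c_i+1$ componentwise and the cyclic relabelling of coordinates) and, when $p=q$, the extra $\mathbb{Z}_2$ (the boundary-swap, acting by the order-reversing involution on tuples). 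This is a direct matching of generators, using how $\psi$ sends $r_1$, $r_2$ to the grading shift $(1)$ and the twist automorphisms on $\coh\text{-}\mathbb{X}(p,q)$.

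With these identifications in hand, the final step is immediate: for $f\in\widetilde{H}_{p,q}$ and a vertex $\nu\in\Lambda_{(p,q)}^0$, write $\Gamma_\nu$ for the corresponding bridging-arc triangulation, so that $\eta(\nu)=\phi(\Gamma_\nu)$ and $\psi$ is the same isomorphism on both sides. Then
\begin{align*}
\eta(f(\nu)) = \phi(f(\Gamma_\nu)) = \psi(f)(\phi(\Gamma_\nu)) = \psi(f)(\eta(\nu)),
\end{align*}
where the middle equality is exactly \eqref{compatible of groups action} of Theorem \ref{compatible}. The commutativity of the displayed square follows. I expect the only real work — the main obstacle — to be step two, specifically verifying sub-point (ii): that the explicit, "unexpected" combinatorial formula for the $\widetilde{H}_{p,q}$-action on tuples really is the one induced by mapping classes. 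This amounts to tracking a tilting bundle $\bigoplus_i \co(\vec{x}_i)$, reading off its grading data $(c_1,\dots,c_p)$ from the arcs of $\Gamma_\nu$, applying a generator of $\mathcal{MG}(A_{p,q})$ to the arcs, and re-reading the data; the bookkeeping with the $\mathbb{L}$-grading and the cyclic ordering of marked points is where errors are easy to make, and the $p=q$ case needs the boundary-swap handled separately because it is not realized by any element of $H_{p,q}$ alone.
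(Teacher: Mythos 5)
Your overall strategy is sound and close in substance to the paper's, but the two arguments are organized in opposite directions. The paper does not route through Theorem \ref{compatible} at all: it takes the tilting bundle $T$ with $\varphi(T)=(c_1,\dots,c_p)$, directly computes $\varphi(T(\vec{x}_1))=(c_p-q,c_1,\dots,c_{p-1})$ and $\varphi(T(\vec{x}_2))=(c_1-1,\dots,c_p-1)$, \emph{defines} $\rho_1,\rho_2$ (and, for $p=q$, the involution $\rho$ of Proposition \ref{bijective map of vertex such that commutative}) to be exactly these induced maps, checks the relations $\rho_1\rho_2=\rho_2\rho_1$ and $\rho_1^{p}=\rho_2^{q}$ so that one gets a genuine $\widetilde{H}_{p,q}$-action, and then the compatibility \eqref{compatible of groups action2} is immediate from Theorem \ref{description of tilting graph of vector bundles} because commutativity holds on generators by construction. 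Your plan instead transports the whole square from Theorem \ref{compatible} by observing that tilting-bundle triangulations are exactly the all-bridging-arc triangulations and that mapping classes preserve this class; this is a legitimate alternative and makes the "for all $f$" step cost nothing, but it does not avoid the real work, which is your sub-point (ii): identifying the transported action with the explicit combinatorial one. That is precisely the computation the paper performs, so the two proofs coincide where it matters.

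One concrete caveat on sub-point (ii): your guessed formulas do not all match the action the theorem actually refers to. The generator $r_1$ acts by the cyclic relabelling \emph{with a shift by $-q$ on the wrapped coordinate} (not a bare relabelling), and for $p=q$ the boundary swap is \emph{not} the order-reversing involution on tuples --- the paper's $\rho$ involves negating and renormalizing the cut triangulation, and e.g.\ sends $(0,0,1,4)$ to $(1,1,1,2)$ in type $(4,4)$, which no reversal (with or without sign changes) produces. Since the "unexpected action" in the statement is by definition the one with these explicit formulas, your verification step must be carried out against them; with that correction your outline goes through.
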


The paper is organized as follows. In Section 2, we recall some basic facts on weighted projective lines of type $(p,q)$. In Section 3, we show that
the marked annulus $A_{p,q}$ gives a geometric model of the category ${\rm coh}\mbox{-}\mathbb{X}(p,q)$. In Section 4, we establish a bijection between tilting sheaves in ${\rm coh}\mbox{-}\mathbb{X}(p,q)$ and triangulations of $A_{p,q}$, and show that the flip of an arc is compatible with the tilting mutation of an indecomposable sheaf. Sections 5 and 6 focus on tilting mutation and tilting graphs, with the aim to prove Theorem \ref{description of tilting graph of vector bundles}. We give a geometric interpretation of the automorphism group of ${\rm coh}\mbox{-}\mathbb{X}(p,q)$ in Section 7, and prove Theorem \ref {compatible} and Theorem \ref{compatible2}. In the final Section 8, we present a geometric description of the perpendicular category of an exceptional sheaf in ${\rm coh}\mbox{-}\mathbb{X}(p,q)$.

\section{Weighted projective lines of type (p,q)}\label{wpl}

In this section, we recall from \cite{GL87, Len2011} some basic facts about the weighted projective lines of type $(p,q)$ for $p,q\in \mathbb{Z}_{\geq1}$.

Let $\mathbb{L}(p,q)$ be the abelian group on generators $\vec{x}_1, \vec{x}_2$ with relations $$p\vec{x}_1=q\vec{x}_2:=\vec{c}.$$
Then each $\vec{x}\in\mathbb{L}(p, q)$ can be uniquely written in \emph{normal form}
$$\vec{x}=l_1\vec{x}_1+l_2\vec{x}_2+l\vec{c},\;\;{\rm where}\,\, 0\leq l_1\leq p-1,\,0\leq l_2\leq q-1\,\,{\rm and}\;l\in \mathbb{Z}.$$
$\mathbb{L}(p,q)$ is an ordered group whose cone of positive elements is $\mathbb{N}\vec{x}_1+\mathbb{N}\vec{x}_2$.
We equip $\mathbb{L}(p,q)$ with the structure of a partially ordered set:
$\vec{x}\leq\vec{y}$ if and only if $\vec{y}-\vec{x}\in \mathbb{N}\vec{x}_1+\mathbb{N}\vec{x}_2$.

Let $\mathbf{k}$ be an algebraically closed field and ${\boldsymbol\lambda}=
(\lambda_1,\lambda_2)$ be a sequence of
pairwise distinct closed points on the projective line $\mathbb{P}_{\mathbf k}^1$. A \emph{weighted projective line} $\mathbb{X}(p,q)$ of weight type $(p,q)$ and parameter sequence ${\boldsymbol\lambda}$ is obtained from the projective line $\mathbb{P}_{\mathbf k}^1$ by attaching the weight $p,\; q$ to $\lambda_1, \lambda_2$, respectively. The parameter sequence can be normalized as $\lambda_1=\infty,\;\lambda_2=0.$

The \emph{homogeneous coordinate algebra} $S(p,q)$ of the weighted projective line $\mathbb{X}(p,q)$ is given by $\mathbf{k}[x_1, x_2]$,
which is $\mathbb{L}(p, q)$-graded by means of $\deg x_i=\vec{x}_i$ for $i=1,\,2$. That is, $S(p,q)=\bigoplus_{\vec{x}\in \mathbb{L}(p,q)} S(p,q)_{\vec{x}}$, where $S(p,q)_{\vec{x}}$ is the homogeneous component of degree $\vec{x}$. In particular, if we write $\vec{x}=l_1\vec{x}_1+l_2\vec{x}_2+l\vec{c}$ in its normal form, then $S(p,q)_{\vec{x}}\neq 0$ if and only if $l\geq 0$. Moreover, $\{x_1^{l_1+pa}x_2^{l_2+qb}\; |\; a+b=l, a, b\geq 0\}$ form a ${\mathbf k}$-basis of $S(p,q)_{\vec{x}}$; see \cite[Proposition 1.3]{GL87}.

We recall the definition of the category ${\rm coh}\mbox{-}\mathbb{X}(p,q)$ of coherent sheaves over $\mathbb{X}(p,q)$ by a convenient description via graded $S(p,q)$-modules. Let ${\rm mod}^{\bbL(p,q)}\mbox{-}S(p,q)$ be the abelian category of finitely generated $\bbL(p,q)$-graded $S(p,q)$-modules, and ${\rm mod}_0^{\bbL(p,q)}\mbox{-}S(p,q)$ be its Serre subcategory formed by finite dimensional modules. Denote by ${\rm qmod}^{\bbL(p,q)}\mbox{-}S(p,q):={\rm mod}^{\bbL(p,q)}\mbox{-}S(p,q)/{{\rm mod}_0^{\bbL(p,q)}\mbox{-}S(p,q)}$ the quotient abelian category. By \cite[Theorem 1.8]{GL87} the sheafification functor yields an equivalence
$$
{\rm qmod}^{\bbL(p,q)}\mbox{-}S(p,q)\stackrel{\sim}\longrightarrow {\rm coh}\mbox{-}\mathbb{X}(p,q).
$$
From now on we will identify these two categories.

Notice that in this paper, we will abbreviate ${\rm Hom}_{{\rm coh}\mbox{-}\mathbb{X}(p,q)}(-,-)$ and ${\rm Ext}^{1}_{{\rm coh}\mbox{-}\mathbb{X}(p,q)}(-,-)$ as ${\rm Hom}(-,-)$ and ${\rm Ext}^{1}(-,-)$ respectively.

\begin{prop}[\cite{GL87, Len2011}]\label{the properties of coherent sheaves}
The category ${\rm coh}\mbox{-}\mathbb{X}(p,q)$ is connected, ${\rm Hom}$-finite and $\mathbf{k}$-linear with the following properties:

\begin{itemize}
\item[(1)] ${\rm coh}\mbox{-}\mathbb{X}(p,q)$ satisfies Serre duality in the form ${\rm Ext}^{1}(X,Y)\cong{\rm DHom}(Y, \tau(X)),$ where the $\mathbf{k}$-equivalence $\tau:{\rm coh}\mbox{-}\mathbb{X}(p,q)\longrightarrow {\rm coh}\mbox{-}\mathbb{X}(p,q)$ is the shift $X\mapsto X(\vec{\omega})$ with the dualizing element $\vec{\omega}=-(\vec{x}_1+\vec{x}_2)\in \mathbb{L}(p,q).$

\item[(2)] ${\rm coh}\mbox{-}\mathbb{X}(p,q)={\rm vec}\mbox{-}\mathbb{X}(p,q)\bigvee{\rm coh}_{0}\mbox{-}\mathbb{X}(p,q),$ where ${\rm vec}\mbox{-}\mathbb{X}(p,q)$ denotes the full subcategory of ${\rm coh}\mbox{-}\mathbb{X}(p,q)$ consisting of vector bundles over $\mathbb{X}(p,q)$, and ${\rm coh}_{0}\mbox{-}\mathbb{X}(p,q)$ denotes the full subcategory of ${\rm coh}\mbox{-}\mathbb{X}(p,q)$ consisting of all objects of finite length, $\bigvee$ means that each indecomposable object of ${\rm coh}\mbox{-}\mathbb{X}(p,q)$ is either in ${\rm vec}\mbox{-}\mathbb{X}(p,q)$ or in ${\rm coh}_{0}\mbox{-}\mathbb{X}(p,q)$, and there are no non-zero morphism from ${\rm coh}_{0}\mbox{-}\mathbb{X}(p,q)$ to ${\rm vec}\mbox{-}\mathbb{X}(p,q).$

\item[(3)] Each indecomposable bundle over $\mathbb{X}(p,q)$ is a line bundle.

\item[(4)] Each line bundle $L$ is exceptional, i.e., ${\rm Hom}(L,L)\cong\mathbf{k}$ and ${\rm Ext}^{1}(L,L)=0$.

\item[(5)] For any $\vec{x}, \vec{y}\in\mathbb{L}(p,q)$, there has
$${\rm Hom}(\co(\vec{x}), \co(\vec{y}))\cong S(p,q)_{\vec{y}-\vec{x}}.$$
In particular, ${\rm Hom}(\co(\vec{x}), \co(\vec{y}))\neq 0$ if and only if $\vec{x}\leq \vec{y}.$

\item[(6)] Denote by $K_{0}({\rm coh}\mbox{-}\mathbb{X}(p,q))$ the \emph{Grothendieck group} of the category ${\rm coh}\mbox{-}\mathbb{X}(p,q)$. Then the classes $[\co(\vec{x})] \; (0\leq \vec{x}\leq \vec{c})$ form a $\mathbb{Z}$-basis of $K_{0}({\rm coh}\mbox{-}\mathbb{X}(p, q))$. Moreover, the rank of Grothendieck group $K_{0}({\rm coh}\mbox{-}\mathbb{X}(p,q))$ equals $p+q.$
\end{itemize}
\end{prop}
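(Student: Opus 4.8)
The plan is to derive every assertion from two inputs: the graded-module model ${\rm coh}\mbox{-}\mathbb{X}(p,q)\simeq {\rm qmod}^{\bbL(p,q)}\mbox{-}S(p,q)$ recalled above, and the derived equivalence $D^b({\rm coh}\mbox{-}\mathbb{X}(p,q))\simeq D^b({\rm mod}\,\tilde{A}_{p,q})$ noted in the introduction. The ambient formal properties are structural: $S=S(p,q)$ is a connected ($S_0=\mathbf{k}$) Noetherian $\bbL(p,q)$-graded domain with finite-dimensional homogeneous components, whence the $\mathbf{k}$-linearity, connectedness and $\Hom$-finiteness of the Serre quotient; and $S=\mathbf{k}[x_1,x_2]$ is graded regular of Krull dimension two, so the quotient category is hereditary. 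For (5) I would compute morphisms directly in the quotient: the twisting modules $S(\vec{x})$ are saturated, so $\Hom(\co(\vec{x}),\co(\vec{y}))$ coincides with the graded morphism space of degree $\vec{y}-\vec{x}$, namely $S_{\vec{y}-\vec{x}}$, and the recalled criterion $S_{\vec{z}}\neq0\Leftrightarrow l\geq0$ in normal form yields the nonvanishing $\Leftrightarrow \vec{x}\leq\vec{y}$. For (1) I would invoke graded local duality for the graded Gorenstein ring $S$: its canonical module is $S$ shifted by the negative of the sum of the degrees of the variables, i.e. by $\vec{\omega}=-(\vec{x}_1+\vec{x}_2)$, and transporting this through sheafification produces the functorial isomorphism $\Ext^1(X,Y)\cong D\Hom(Y,X(\vec{\omega}))$ with $\tau=(\vec{\omega})$. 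Property (4) then drops out: $\Hom(\co(\vec{x}),\co(\vec{x}))=S_0=\mathbf{k}$ by (5), while $\Ext^1(\co(\vec{x}),\co(\vec{x}))\cong D\,S_{\vec{\omega}}$ by (1) and (5), and the normal form $\vec{\omega}=(p-1)\vec{x}_1+(q-1)\vec{x}_2-2\vec{c}$ has $l=-2<0$, so this space vanishes.

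For (2), the two subcategories form a torsion pair: ${\rm coh}_0\mbox{-}\mathbb{X}(p,q)$ is the class of finite-length (torsion) objects and ${\rm vec}\mbox{-}\mathbb{X}(p,q)$ that of torsion-free objects, and each $X$ sits in a canonical sequence $0\to tX\to X\to X/tX\to0$ with $tX$ of finite length and $X/tX$ a bundle. A finite-length sheaf admits no nonzero map into a torsion-free one, which gives the asserted vanishing of morphisms from ${\rm coh}_0\mbox{-}\mathbb{X}(p,q)$ to ${\rm vec}\mbox{-}\mathbb{X}(p,q)$. Applying Serre duality (1), $\Ext^1\big(X/tX,\,tX\big)\cong D\Hom\big(tX,(X/tX)(\vec{\omega})\big)=0$, because $(X/tX)(\vec{\omega})$ is again a bundle; hence the canonical sequence splits and any indecomposable object lies wholly in one of the two subcategories.

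The genuinely $(p,q)$-specific statement is (3), which I expect to be the main obstacle. My plan is Auslander--Reiten theoretic. By (1) and (4) the space $\Ext^1(\co(\vec{x}),\tau\co(\vec{x}))\cong D\End(\co(\vec{x}))$ is one-dimensional, and the Koszul sequence attached to the regular sequence $x_1,x_2$ in $S$,
\[
0\to\co(\vec{x}-\vec{x}_1-\vec{x}_2)\xrightarrow{\binom{x_2}{-x_1}}\co(\vec{x}-\vec{x}_1)\oplus\co(\vec{x}-\vec{x}_2)\xrightarrow{(x_1,x_2)}\co(\vec{x})\to0,
\]
is a non-split extension (its middle term contains no copy of $\co(\vec{x})$), hence represents a generator of that space and is therefore the Auslander--Reiten sequence ending at $\co(\vec{x})$ inside ${\rm vec}\mbox{-}\mathbb{X}(p,q)$. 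Consequently line bundles are closed under $\tau^{\pm1}=(\pm\vec{\omega})$, and the irreducible morphisms among vector bundles join only line bundles. The remaining, and hardest, point is that these line bundles exhaust the indecomposable bundles; I would obtain this from connectedness of the Auslander--Reiten quiver of ${\rm vec}\mbox{-}\mathbb{X}(p,q)$, transporting the classification of indecomposables for the tame hereditary algebra $\tilde{A}_{p,q}$ across the derived equivalence to identify the vector bundles with the indecomposables lying outside the tubes, which under the sheaf structure form a single component reached from $\co$ along the Koszul irreducible maps.

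For (6) I would first read off $\mathrm{rk}\,K_0({\rm coh}\mbox{-}\mathbb{X}(p,q))=p+q$ from the tilting bundle $\bigoplus_{0\leq\vec{x}\leq\vec{c}}\co(\vec{x})$, whose endomorphism ring is the hereditary canonical algebra $\tilde{A}_{p,q}$ with exactly $p+q$ simple modules. A direct check with the normal form shows the interval $[0,\vec{c}]$ consists of precisely the $p+q$ elements $\{a\vec{x}_1\mid0\leq a\leq p\}\cup\{b\vec{x}_2\mid0\leq b\leq q\}$ subject to $p\vec{x}_1=\vec{c}=q\vec{x}_2$, since a mixed element such as $\vec{x}_1+\vec{x}_2$ already fails $\leq\vec{c}$ (its normal form against $\vec{c}$ has $l<0$). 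Finally I would prove that the $p+q$ classes $[\co(\vec{x})]$ with $0\leq\vec{x}\leq\vec{c}$ form a $\mathbb{Z}$-basis by exhibiting a unimodular change of basis to the classes of $\co$ together with the simple torsion sheaves at $\lambda_1,\lambda_2$, using the defining short exact sequences $0\to\co(\vec{x}-\vec{x}_i)\xrightarrow{x_i}\co(\vec{x})\to S_i\to0$ that convert line-bundle twists into torsion classes.
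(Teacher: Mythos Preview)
The paper does not prove this proposition at all: it is stated with the citation \cite{GL87, Len2011} and sits in the ``recall'' section, so there is no argument to compare against. Your proposal is therefore not competing with the paper but rather reconstructing the original Geigle--Lenzing arguments, and for the most part it does so faithfully. The derivations of (5) from saturation of the twisting modules, of (1) from the graded Gorenstein structure of $S=\mathbf{k}[x_1,x_2]$ with $a$-invariant $\vec{\omega}$, of (4) from (1) and (5), and of the splitting in (2) via Serre duality are exactly the standard route. Your count in (6) of $|[0,\vec{c}]|=p+q$ and the tilting argument for the rank are also the intended ones.

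The one place where your sketch is genuinely thin is (3). Exhibiting the Koszul sequence as the almost split sequence is fine, but inferring that line bundles exhaust the indecomposable bundles from ``connectedness of the Auslander--Reiten quiver of ${\rm vec}\mbox{-}\mathbb{X}(p,q)$, transported from $\tilde{A}_{p,q}$'' is close to circular: identifying which indecomposables over $\tilde{A}_{p,q}$ correspond to bundles already presupposes control over the bundle side. The clean argument in the cited sources is intrinsic: every bundle has a line bundle filtration, and for a weighted projective line of positive (orbifold) Euler characteristic one shows, using slope/degree considerations and the Riemann--Roch formula, that any short exact sequence of line bundles with indecomposable middle term must split; equivalently, every indecomposable bundle is semistable of integral slope and hence a line bundle. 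If you want to keep the AR-theoretic flavour, the missing ingredient is additivity of rank on almost split sequences together with the fact that a rank-$r$ indecomposable bundle would force an AR mesh with middle term of rank $2r$, which for $r\geq 2$ is incompatible with the shape of the line-bundle component you already computed; but this still needs the line-bundle filtration to get started.
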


By \cite[Proposition 1.1]{LR06}, the subcategory ${\rm coh}_0\mbox{-}\mathbb{X}(p,q)$ decomposes into a coproduct
$\coprod_{\lambda\in \mathbb{P}_{\mathbf k}^1}\mathcal{U}_{\lambda}$ of connected uniserial length categories, whose associated Auslander-Reiten quiver is a stable tube $\mathbb{ZA}_{\infty}/(\tau^r)$ for some $r\in \mathbb{Z}_{\geq 1}$ (see e.g.\cite{SS2007}). Here, $r$ is called the rank of the stable tube
$\mathbb{ZA}_{\infty}/(\tau^r)$, which depends on $\lambda$. Precisely, $r=p,\; q$ for $\lambda=\infty,\;0$, respectively and $r=1$ for $\lambda \in \mathbf k^{*}=\mathbf k\setminus \{0\}$. Objects that lie at the bottom of the stable tube are all simple objects of ${\rm coh}\mbox{-}\mathbb{X}(p,q)$. A stable tube of $r=1$ is called a \emph{homogeneous stable tube}. Each $\lambda \in \mathbf k^{*}$ is associated with a unique simple sheaf $S_{\lambda}$, called \emph{ordinary simple}; while $\lambda=\infty$ (\emph{resp.} $\lambda=0$) is associated with $p$ (\emph{resp.} $q$) simple sheaves $S_{\infty,i}\;(i\in\mathbb{Z}/p\mathbb{Z})$ (\emph{resp.} $S_{0,i}\; (i\in\mathbb{Z}/q\mathbb{Z}$)) called \emph{exceptional simples}.

Now we recall some important short exact sequences in ${\rm coh}\mbox{-}\mathbb{X}(p,q)$. For each ordinary simple sheaf $S_{\lambda}$, there is an exact sequence
$$0\longrightarrow \mathcal{O}\stackrel{u_{\lambda}}{\longrightarrow}\mathcal{O}(\vec{c})\longrightarrow S_{\lambda}\longrightarrow 0,$$
where the homomorphism $u_{\lambda}: \mathcal{O}\longrightarrow \mathcal{O}(\vec{c})$ is given by
multiplication with $x_2^{q}-\lambda x_1^{p}$.
By contrast, if $\lambda\in \{\infty, 0\}$, there are exact sequences
\begin{align}\label{important exact sequence}
&0\longrightarrow \mathcal{O}((i-1)\vec{x}_1)\stackrel{u_{\infty}}{\longrightarrow} \mathcal{O}(i\vec{x}_1)\longrightarrow S_{\infty,i}\longrightarrow 0,\;\;i\in\mathbb{Z}/p\mathbb{Z};\\
&0\longrightarrow \mathcal{O}((i-1)\vec{x}_2)\stackrel{u_{0}}{\longrightarrow} \mathcal{O}(i\vec{x}_2)\longrightarrow S_{0,i}\longrightarrow 0,\;\;i\in\mathbb{Z}/q\mathbb{Z};
\end{align}
where $u_{\infty}$ (\emph{resp.} $u_{0}$) is given by multiplication with $x_1$ (\emph{resp.} $x_2$).

For each $\vec{x}=l_1\vec{x}_1+l_2\vec{x}_2\in \mathbb{L}(p,q)$, one has
$$S_{\lambda}(\vec{x})=S_{\lambda}\;\;{\rm for}\;\, \lambda \in \mathbf k^{*};$$
and $$S_{\infty,i}(\vec{x})=S_{\infty, i+l_{1}},\;\; S_{0,i}(\vec{x})=S_{0, i+l_{2}}.$$

For $\lambda\in \{\infty,0\}$ and each $j \in \mathbb{Z}_{\geq 1}$, let $S_{\lambda,i}^{(j)}$ denote the indecomposable object in $\mathcal{U}_{\lambda}$ of length $j$ with top $S_{\lambda,i}.$ More precisely, the composition series of $S_{\lambda,i}^{(j)}$ has the following form:
$$S_{\lambda, i-j+1} \hookrightarrow S_{\lambda, i-j+2}^{(2)} \hookrightarrow \cdots\hookrightarrow S_{\lambda, i-2}^{(j-2)} \hookrightarrow S_{\lambda, i-1}^{(j-1)}\hookrightarrow  S_{\lambda, i}^{(j)}$$
with $S_{\lambda, i-k}^{(j-k)}/S_{\lambda, i-k-1}^{(j-k-1)}\cong S_{\lambda, i-k}$ for $0\leq k\leq j-2.$

\section{Geometric model for the category ${\rm coh}\mbox{-}\mathbb{X}(p,q)$}\label{gmwpl}

In this section, we aim to give a geometric model for ${\rm coh}\mbox{-}\mathbb{X}(p,q)$ in terms of an annulus with $p$ marked points on the inner boundary and $q$ marked points on the outer boundary. We will establish a correspondence between indecomposable coherent sheaves and homotopy classes of oriented curves in the marked annulus and then study algebraic properties (e.g. Auslander-Reiten sequences, the dimension of extension space of degree 1, exact sequences) of ${\rm coh}\mbox{-}\mathbb{X}(p,q)$ in geometric aspects.

\subsection{The universal cover of $A_{p,q}$.}

Let $A_{p,q}$ be an annulus with $p$ marked points on the inner boundary and $q$ marked points on the outer boundary. Without loss of generality, assume $1\leq p\leq q.$ In this subsection, we recall from \cite{BT2019} for the universal cover of $A_{p,q}$.

Assume the marked points are distributed in equidistance on the two boundaries of $A_{p,q}$. We label the marked points on the inner boundary with $0_{\partial},\,(\frac{1}{p})_{\partial},\,\cdots,\, (\frac{p-1}{p})_{\partial}$ in an anti-clockwise direction, and label the marked points on the outer boundary with $0_{\partial^{\prime}},$ $(\frac{q-1}{q})_{\partial^{\prime}},\,\cdots,$
$(\frac{1}{q})_{\partial^{\prime}}$ in a clockwise direction; c.f. Figure \ref{marked annulus}.

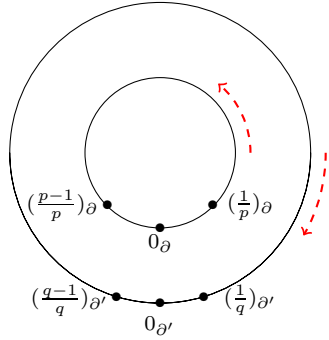
\begin{figure}[H]
\begin{tikzpicture}
\draw[](0,0)arc(0:-540:2);
\draw[](-1,0)arc(0:360:1);
\draw[dashed,red,thick,->](-0.8,0)arc(0:50:1.2);
\draw[dashed,red,thick,->](0.2,0)arc(0:-30:2.2);
\node()at(-2,-1){\tiny$\bullet$};
\node()at(-2,-1.2){\tiny{$0_{\partial}$}};
\node()at(-1.3,-0.7){\tiny$\bullet$};
\node()at(-0.8,-0.7){\tiny{$(\frac{1}{p})_{\partial}$}};
\node()at(-2.7,-0.7){\tiny$\bullet$};
\node()at(-3.3,-0.7){\tiny{$(\frac{p-1}{p})_{\partial}$}};
\node()at(-2,-2){\tiny$\bullet$};
\node()at(-2,-2.3){\tiny{$0_{\partial^{\prime}}$}};
\node()at(-2.58,-1.92){\tiny$\bullet$};
\node()at(-3.2,-1.96){\tiny{$(\frac{q-1}{q})_{\partial^{\prime}}$}};
\node()at(-1.42,-1.92){\tiny$\bullet$};
\node()at(-0.8,-1.96){\tiny{$(\frac{1}{q})_{\partial^{\prime}}$}};
\end{tikzpicture}
\caption{Marked annulus $A_{p,q}$}\label{marked annulus}
\end{figure}

Let ${\rm Cyl_{p,q}}$ be a rectangle of height 1 and width $1$ in $\mathbb{R}^{2}$ with $p$ marked points on the upper boundary $\partial$ and $q$ marked points on the lower boundary $\partial^{\prime}$, identifying its two vertical sides. Denote the points on the upper boundary of ${\rm Cyl_{p,q}}$ by $i_{\partial},$ for $0\leq i\leq \frac{p-1}{p}:$
$$0_{\partial}:=(0, 1),\;\,(\frac{1}{p})_{\partial}:=(\frac{1}{p}, 1),\;\,\cdots,\;\,(\frac{p-1}{p})_{\partial}:=(\frac{p-1}{p}, 1),$$
and denote the points on the lower boundary of ${\rm Cyl_{p,q}}$ by $j_{\partial^{\prime}}$, for $0\leq j\leq\frac{q-1}{q}:$
$$0_{\partial^{\prime}}:=(0, 0),\;\,(\frac{1}{q})_{\partial^{\prime}}:=(\frac{1}{q}, 0),\;\,\cdots,\;\,(\frac{q-1}{q})_{\partial^{\prime}}:=(\frac{q-1}{q}, 0).$$ Then $A_{p,q}$ can be identified with ${\rm Cyl_{p,q}}$ in the sense of viewing the upper (\emph{resp.} lower) boundary of ${\rm Cyl_{p,q}}$ as the inner (\emph{resp.} outer) boundary of $A_{p,q}$, where the marked points of ${\rm Cyl_{p,q}}$ are oriented  from left to right on the upper boundary $\partial$ and from right to left on the lower boundary $\partial^{\prime}$, as in Figure \ref{Annulus via rectangle}.
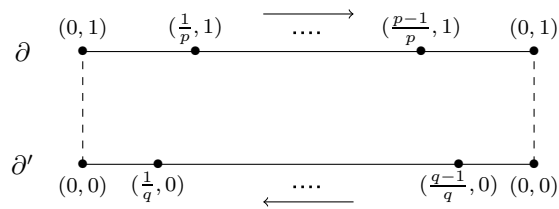
\begin{figure}[H]
\begin{tikzpicture}
\draw[-](-5,-0.5)--(1,-0.5);
\draw[->](-2.6,1.5)--(-1.4,1.5);
\draw[->](-1.4,-1)--(-2.6,-1);
\draw[-](-5,1)--(1,1);
\draw[-,dashed](-5,-0.5)--(-5,1);
\draw[-,dashed](1,-0.5)--(1,1);
\node()at(-5,1){\tiny$\bullet$};
\node()at(-5,-0.5){\tiny$\bullet$};
\node()at(-3.5, 1){\tiny$\bullet$};
\node()at(-4,-0.5){\tiny$\bullet$};
\node()at(1,-0.5){\tiny$\bullet$};
\node()at(-0.5,1){\tiny$\bullet$};
\node()at(1,1){\tiny$\bullet$};
\node()at(0,-0.5){\tiny$\bullet$};
\node()at(-5,1.3){\tiny{$(0,1)$}};
\node()at(-5,-0.8){\tiny{$(0,0)$}};
\node()at(-3.5,1.3){\tiny{$(\frac{1}{p}, 1)$}};
\node()at(-0.5,1.3){\tiny{$(\frac{p-1}{p}, 1)$}};
\node()at(-4,-0.8){\tiny{$(\frac{1}{q}, 0)$}};
\node()at(0,-0.8){\tiny{$(\frac{q-1}{q}, 0)$}};
\draw[line width=1pt,dotted](-2.2,1.25)--(-1.8,1.25);
\draw[line width=1pt,dotted](-2.2,-0.8)--(-1.8,-0.8);
\node()at(1,1.3){\tiny{$(0,1)$}};
\node()at(1,-0.8){\tiny{$(0, 0)$}};
\node()at(-5.8,-0.5){\footnotesize$\partial^{\prime}$};
\node()at(-5.8,1){\footnotesize$\partial$};
\end{tikzpicture}
\caption{Annulus via rectangle ${\rm Cyl_{p,q}}$}\label{Annulus via rectangle}
\end{figure}

It will be most convenient to work with in the universal cover $(\mathbb{U}, \pi_{p,q})$ of ${\rm Cyl_{p,q}}$, where $\mathbb{U}=\{(x,y)\in\mathbb{R}^{2}|0\leq y\leq 1\}$ inherits the orientation of ${\rm Cyl_{p,q}}$, and the covering map $$\pi:=\pi_{p,q}: \mathbb{U}\to {\rm Cyl_{p,q}},\;\;(x,y)\mapsto(x-\lfloor x\rfloor, y).$$
Here, $\lfloor x\rfloor$ denotes the integer part of $x$. Naturally,  $\pi$ is also a covering map of $A_{p,q}$.

Denote the marked point $(i, 1)$ on the upper boundary $\partial$ of $\mathbb{U}$ by $i_{\partial}$ and the marked point $(j, 0)$ on the lower boundary $\partial^{\prime}$ of $\mathbb{U}$ by $j_{\partial^{\prime}}$, for $ i\in \frac{\mathbb{Z}}{p}$ and $j\in \frac{\mathbb{Z}}{q}$, see Figure \ref{Universal cover of $A_{p,q}$}.

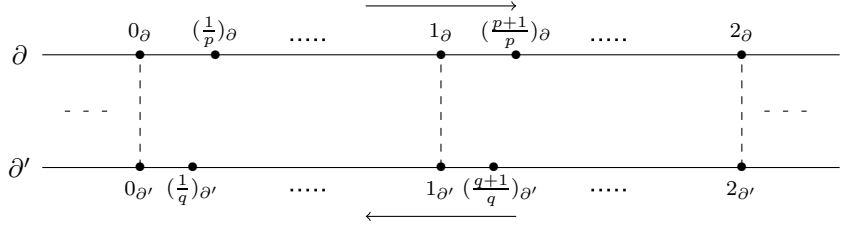
\begin{figure}[H]
\begin{tikzpicture}
\draw[-,dashed](-4,-0.5)--(-4,1);
\draw[-,dashed](0,-0.5)--(0,1);
\draw[-,dashed](4,-0.5)--(4,1);
\draw[-](-5.3,-0.5)--(5.3,-0.5);
\draw[-](-5.3,1)--(5.3,1);
\draw[->](-1,1.65)--(1,1.65);
\draw[->](1,-1.15)--(-1,-1.15);
\node()at(-4,-0.5){\tiny$\bullet$};
\node()at(0,-0.5){\tiny$\bullet$};
\node()at(-4,1){\tiny$\bullet$};
\node()at(0,1){\tiny$\bullet$};
\node()at(-3,1){\tiny$\bullet$};
\node()at(-3.3,-0.5){\tiny$\bullet$};
\node()at(4,1){\tiny$\bullet$};
\node()at(4,-0.5){\tiny$\bullet$};
\node()at(1,1){\tiny$\bullet$};
\node()at(0.7,-0.5){\tiny$\bullet$};
\node()at(-4,1.3){\tiny{$0_{\partial}$}};
\node()at(-5.6,1){\footnotesize{${\partial}$}};
\node()at(-5.6,-0.5){\footnotesize{${\partial^{\prime}}$}};
\node()at(-4,-0.8){\tiny{$0_{\partial^{\prime}}$}};
\node()at(-3,1.3){\tiny{$(\frac{1}{p})_{\partial}$}};
\node()at(-3.3,-0.8){\tiny{$(\frac{1}{q})_{\partial^{\prime}}$}};
\node()at(1,1.3){\tiny{$(\frac{p+1}{p})_{\partial}$}};
\node()at(0.8,-0.8){\tiny{$(\frac{q+1}{q})_{\partial^{\prime}}$}};
\draw[line width=1pt,dotted](-2,1.2)--(-1.5,1.2);
\draw[line width=1pt,dotted](-2,-0.8)--(-1.5,-0.8);
\draw[line width=1pt,dotted](2,1.2)--(2.5,1.2);
\draw[line width=1pt,dotted](2,-0.8)--(2.5,-0.8);
\node()at(4,1.3){\tiny{$2_{\partial}$}};
\node()at(4,-0.8){\tiny{$2_{\partial^{\prime}}$}};
\node()at(0,1.3){\tiny{$1_{\partial}$}};
\node()at(0,-0.8){\tiny{$1_{\partial^{\prime}}$}};
\draw[dash pattern=on 2pt off 5pt on 2pt off 5pt](-5,0.25)--(-4.3,0.25);
\draw[dash pattern=on 2pt off 5pt on 2pt off 5pt](4.3,0.25)--(5,0.25);
\end{tikzpicture}
\caption{Universal cover $\mathbb{U}$ of $A_{p,q}$}\label{Universal cover of $A_{p,q}$}
\end{figure}
\subsection{Arcs in $A_{p,q}$ and $\mathbb{U}$.}

In this subsection, we recall some definitions of curves and arcs in $A_{p,q}$ and $\mathbb{U}$. In this paper, we consider curves and arcs up to homotopy. The following definition refers to \cite{V2018, BZ2011}.

\begin{defn}
A {\emph {curve}} in $\mathbb{U}$ is a continuous function $c:[0,1]\longrightarrow \mathbb{U}$ such that $c(t)\in \mathbb{U}^0:=\mathbb{U}\setminus\{\partial, \partial^{\prime}\}$ for any $t\in (0,1)$. An {\emph {arc}} in $\mathbb{U}$ is a curve whose endpoints are marked points of $\mathbb{U}$, satisfying that it does not intersect itself in the interior of $\mathbb{U}$, the interior of the arc is disjoint from the boundary of $\mathbb{U}$, and it does not cut out a monogon or digon.
\end{defn}

If an arc in $\mathbb{U}$ starts at a marked point $x_{b_1}$ and ends at a marked point $y_{b_2}$ with $x, y\in \frac{\mathbb{Z}}{p}$ or $\frac{\mathbb{Z}}{q},
\,b_{i}\in \{\partial,\,\partial^{\prime}\}$ for $i=1,\,2$, it will be denoted by $[x_{b_1}, y_{b_2}]$.
The following definition refers to \cite{BT2019}.

\begin{defn}\label{peripheral arc and bridging arc}
Let $\alpha=[x_{b_1}, y_{b_2}]$ be an arc in $\mathbb{U}$. If $b_1\neq b_2,$ $\alpha$ is called a \emph{bridging arc}. If $b_1=b_2=\partial$ and $y-x\geq \frac{2}{p}$, or $b_1=b_2=\partial^{\prime}$ and $y-x\geq \frac{2}{q}$, then $\alpha$ is called a \emph{peripheral arc}.
\end{defn}

In order to give a geometric model for line bundles in ${\rm coh}\mbox{-}\mathbb{X}(p,q)$, we only need to consider the bridging arcs that oriented from the boundary $\partial^{\prime}$ to the boundary $\partial$. Such an arc $[x_{\partial^{\prime}}, y_{\partial}]$ is called a \emph{positive bridging arc}.

Similarly, one can define curves and arcs in the marked annulus $A_{p,q}$. Note that the image of an arc in $\mathbb{U}$ under $\pi: \mathbb{U}\to A_{p,q}$ maybe intersect itself in the interior of $A_{p,q}$. We define \emph{bridging (\emph{resp.} peripheral) curve} in $A_{p,q}$ as the image $\pi(\alpha)$ for a \emph{bridging (\emph{resp.} peripheral) arc} $\alpha$ in $\mathbb{U}$. If $\pi(\alpha)$ is an arc additionally, then it will be called a \emph{bridging (\emph{resp.} peripheral) arc} in $A_{p,q}$.

In order to give a geometric model for homogeneous torsion sheaves in the subcategory $\coprod_{\lambda\in \mathbf{k}^{*}}\mathcal{U}_{\lambda}$ of ${\rm coh}\mbox{-}\mathbb{X}(p,q)$, we introduce the notion of $\mathbf k^{*}$-parameterized $n$-cycles as follows.

\begin{defn}
For $n\in \mathbb{Z}_{\geq1}$, an $n$-cycle in $A_{p,q}$ is a curve $\pi(c)$, where $c$ is a curve in $\mathbb{U}^{0}$ with $c(1)-c(0)=(n,0).$ In particular, the 1-cycle will be called a \emph{loop}. For the notion of $\mathbf k^{*}$-parameterized $n$-cycles we refer to the set $\{(\lambda, L^{n})\,|\,\lambda\in \mathbf k^{*}\}$, where $L$ is a loop in $A_{p,q}$ with the orientation in an anti-clockwise direction.
\end{defn}

Recall from Proposition \ref{the properties of coherent sheaves} that the category ${\rm coh}\mbox{-}\mathbb{X}(p,q)$ contains two parts: vector bundles and torsion sheaves.
Each indecomposable vector bundle is a line bundle.
The torsion sheaves consists of two stable tubes of rank $p$ and $q$ respectively, and
a family of homogeneous tubes indexed by $\mathbf k^{*}$.

Let ${\rm ind}({\rm coh}\mbox{-}\mathbb{X}(p,q))$ denote the full subcategory of ${\rm coh}\mbox{-}\mathbb{X}(p,q)$ consisting of all indecomposable objects. Then the objects of ${\rm ind}({\rm coh}\mbox{-}\mathbb{X}(p,q))$ are line bundles $\co(\vec{x})\,(\vec{x}\in \mathbb{L}(p,q))$, and torsion sheaves $S_{\infty,i}^{(j)}\,(i\in\mathbb{Z}/p\mathbb{Z})$, $S_{0,i}^{(j)}\,(i\in\mathbb{Z}/q\mathbb{Z})$ and $S_{\lambda}^{(j)}\,(\lambda \in \mathbf k^{*})$ for $j \in \mathbb{Z}_{\geq 1}$.

Now we can state the main result of this subsection.

\begin{thm}\label{corresponding}
A parametrization of the isoclasses of ${\rm ind}({\rm coh}\mbox{-}\mathbb{X}(p,q))$ is given as follows:
\begin{itemize}
 \item[(1)] the indecomposable vector bundles are indexed by the homotopy classes of positive bridging curves $\pi([(\frac{j}{q})_{\partial^{\prime}}, (\frac{i}{p})_{\partial}])$ in $A_{p,q}$;
 \item[(2)] the objects in the stable tube of rank $p$ (\emph{resp.} $q$) are indexed by the homotopy classes of peripheral curves $\pi([(\frac{i}{p})_{\partial}, (\frac{j}{p})_{\partial}])$ (\emph{resp.} $\pi([(\frac{i}{q})_{\partial^{\prime}}, (\frac{j}{q})_{\partial^{\prime}}])$) in $A_{p,q}$;
    \item[(3)] the objects in homogeneous tubes are indexed by $\mathbf k^{*}$-parameterized $n$-cycles in $A_{p,q}$.
 \end{itemize}
\end{thm}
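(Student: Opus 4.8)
The plan is to set up three explicit maps, one for each family of indecomposables, and to verify that they are bijections onto the corresponding homotopy classes of curves; the heart of the matter is (1), the line bundle case. For (1), recall from Proposition~\ref{the properties of coherent sheaves}(3) and (6) that the line bundles are exactly the twists $\co(\vec{x})$ with $\vec{x}\in\bbL(p,q)$. Writing $\vec{x}$ in normal form $\vec{x}=l_1\vec{x}_1+l_2\vec{x}_2+l\vec{c}$, I would assign to $\co(\vec{x})$ a positive bridging arc in $\mathbb{U}$ whose endpoint on $\partial$ is determined by the $\vec{x}_1$-component and whose endpoint on $\partial^{\prime}$ is determined by the $\vec{x}_2$-component, with the winding (how many fundamental domains the arc crosses) recording $l$; concretely something of the shape $[(\tfrac{j}{q})_{\partial^{\prime}},(\tfrac{i}{p})_{\partial}]$ where $i\equiv l_1$, $j\equiv -l_2$, and the overall horizontal displacement encodes the remaining integer freedom. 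One then pushes down by $\pi$ to a positive bridging curve in $A_{p,q}$. The key point is that the covering map $\pi$ is chosen so that the deck transformation group is $\bbZ$ (horizontal shift by $1$), while $\bbL(p,q)$ has a torsion-free quotient $\bbL(p,q)/(\text{torsion})\cong\bbZ$ via the degree map; matching these two $\bbZ$-actions is what makes the correspondence well defined and injective.

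For well-definedness and injectivity I would argue as follows. Two positive bridging arcs in $\mathbb{U}$ are homotopic iff they have the same pair of (lifted) endpoints $(j/q,\,i/p)\in\frac{\bbZ}{q}\times\frac{\bbZ}{p}$, since $\mathbb{U}$ is simply connected and the arc is pinned at its endpoints; passing to $A_{p,q}$, two positive bridging curves $\pi(\alpha),\pi(\beta)$ are homotopic iff $\alpha,\beta$ differ by the deck transformation, i.e. by a simultaneous integer shift of both endpoint coordinates. So the homotopy classes of positive bridging curves in $A_{p,q}$ are naturally in bijection with $\bigl(\frac{\bbZ}{q}\times\frac{\bbZ}{p}\bigr)/\bbZ$. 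On the algebraic side, $\bbL(p,q)$ is generated by $\vec{x}_1,\vec{x}_2$ with the single relation $p\vec{x}_1=q\vec{x}_2$, so $\vec{x}\mapsto$ (class of $\vec{x}_1$-coordinate, class of $\vec{x}_2$-coordinate) identifies $\bbL(p,q)$ with exactly this same quotient $\bigl(\frac{\bbZ}{q}\times\frac{\bbZ}{p}\bigr)/\bbZ$ (the diagonal $\bbZ$ being generated by $(q,p)\leftrightarrow\vec{c}$ viewed two ways). Chasing the two descriptions through gives the bijection in (1). I would also cross-check orientation: every $\co(\vec{x})$ has nonzero global sections after a large enough positive twist, and the chosen orientation $\partial^{\prime}\to\partial$ is forced to be uniform for all line bundles, consistent with the remark in the introduction contrasting this with the $\coh\mbox{-}$ versus $\mathrm{mod}\,\widetilde{A}_{p,q}$ pictures.

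For (2), the stable tube of rank $p$ at $\lambda=\infty$ has objects $S_{\infty,i}^{(j)}$ ($i\in\bbZ/p\bbZ$, $j\geq 1$), and I would send $S_{\infty,i}^{(j)}$ to the peripheral curve $\pi([(\tfrac{i}{p})_{\partial},(\tfrac{i+j}{p})_{\partial}])$ on the inner boundary — the parameter $i$ picking out the top $S_{\infty,i}$ and $j$ the length — using the composition series displayed at the end of Section~\ref{wpl} to see that the discrete invariants $(i,j)$ match the combinatorial data $(\text{start endpoint},\text{arc length})$; the condition $j\ge1$ corresponds exactly to $y-x\ge\frac{2}{p}$ after accounting for the shift by $1$ in labels, so all peripheral curves on $\partial$ are hit exactly once. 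The $\lambda=0$ tube of rank $q$ is identical with $\partial^{\prime}$ in place of $\partial$ and $q$ in place of $p$. For (3), the homogeneous tube at $\lambda\in\mathbf{k}^{*}$ has objects $S_{\lambda}^{(j)}$, and I would send $S_{\lambda}^{(j)}$ to the $\mathbf{k}^{*}$-parameterized $j$-cycle $(\lambda,L^{j})$ where $L$ is the anticlockwise loop; that this is a bijection is essentially the definition of a $\mathbf{k}^{*}$-parameterized $n$-cycle, with the extra $\mathbf{k}^{*}$-label encoding which homogeneous tube one is in and $j$ the wrapping number $c(1)-c(0)=(j,0)$. Finally, I would invoke Proposition~\ref{the properties of coherent sheaves}(2) — the decomposition $\coh\mbox{-}\bbX(p,q)=\vect\mbox{-}\bbX(p,q)\bigvee\coh_0\mbox{-}\bbX(p,q)$ together with the tube decomposition of $\coh_0$ recalled from \cite{LR06} — to conclude that (1)--(3) exhaust all isoclasses in $\mathrm{ind}(\coh\mbox{-}\bbX(p,q))$ without overlap.

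The main obstacle I anticipate is (1): getting the bookkeeping of the $\bbL(p,q)$-grading to line up precisely with the endpoint-plus-winding data of positive bridging curves, including fixing normalizations (which marked point is $0_\partial$, the $\pm$ sign in $j\equiv -l_2$, and the direction of the deck shift) so that the later statements — that $\tau$ is a specific elementary move (Proposition~\ref{irreducible morphism}) and that $\Ext^1$-dimension is the positive intersection number (Theorem~\ref{dimension and positive intersection}) — come out with the right signs rather than their mirror images. Everything else is a matter of matching finite combinatorial labels to the known classification of indecomposables.
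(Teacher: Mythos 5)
Your proposal takes essentially the same route as the paper, whose proof simply writes down the explicit assignments $\pi([(\frac{j}{q})_{\partial'},(\frac{i}{p})_{\partial}])\mapsto\mathcal{O}(i\vec{x}_1-j\vec{x}_2)$, $\pi([(\frac{i-j-1}{p})_{\partial},(\frac{i}{p})_{\partial}])\mapsto S_{\infty,i}^{(j)}$, etc., and asserts bijectivity; your identification of $\mathbb{L}(p,q)$ with $\bigl(\frac{\mathbb{Z}}{p}\times\frac{\mathbb{Z}}{q}\bigr)/\mathbb{Z}$ matched against endpoints-mod-deck-shift supplies exactly the justification the paper leaves implicit. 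The only discrepancy is the off-by-one in the peripheral case (the paper's arc for $S_{\infty,i}^{(j)}$ has endpoint difference $\frac{j+1}{p}$, so that $j\geq 1$ matches the condition $y-x\geq\frac{2}{p}$), which you already flag as a normalization to be fixed.
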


\begin{proof}
Recall that $\pi: \mathbb{U}\to A_{p,q}$ is the covering map. Let $\mathcal{C}$ be the set consisting of the following elements with $i, j\in\mathbb{Z}$ and $n\geq 1$:
\begin{itemize}
\item[-] positive bridging curves: $\pi([(\frac{j}{q})_{\partial^{\prime}}, (\frac{i}{p})_{\partial}])$;
    \item[-] peripheral curves: $\pi([(\frac{i}{p})_{\partial}, (\frac{j}{p})_{\partial}])$ and $\pi([(\frac{i}{q})_{\partial^{\prime}}, (\frac{j}{q})_{\partial^{\prime}}])$, with $j-i\geq 2;$
 \item[-]  $\mathbf k^{*}$-parameterized $n$-cycles: $\{(\lambda, L^{n})\,|\,\lambda\in \mathbf k^{*}\}$.
\end{itemize}
Then the following assignments:
\begin{align}\label{map}\nonumber
&\pi([(\frac{j}{q})_{\partial^{\prime}}, \;(\frac{i}{p})_{\partial}])\mapsto\co(i\vec{x}_1-j\vec{x}_2),\quad\;\pi([(\frac{i-j-1}{p})_{\partial},\;(\frac{i}{p})_{\partial}])\mapsto S_{\infty,i}^{(j)}, \nonumber \\
&\\
&\pi([(\frac{-i}{q})_{\partial^{\prime}},\;(\frac{j-i+1}{q})_{\partial^{\prime}}])\mapsto S_{0,i}^{(j)},\quad\;
 (\lambda, L^{j}) \mapsto S_{\lambda}^{(j)},\nonumber
\end{align}
define a bijective map $\phi:\mathcal{C}\longrightarrow{\rm ind}({\rm coh}\mbox{-}\mathbb{X}(p,q))$. This proves Theorem \ref{corresponding}.
\end{proof}

\subsection{Elementary moves}

Basing on the bijection given in Theorem \ref{corresponding}, we consider the geometric interpretation of Auslander-Reiten sequences in ${\rm coh}\mbox{-}\mathbb{X}(p,q)$. Firstly, we introduce the notion of elementary moves of positive bridging curves and peripheral curves in $A_{p,q}$. The following definition refers to \cite{BT2019, BZ2011}.

\begin{defn}\label{elementary move}
For any positive bridging curve or peripheral curve $\gamma$ in $A_{p,q}$, denote by $_{s}\gamma$ the \emph{elementary move of $\gamma$ on its starting point}, meaning that the curve $_{s}\gamma$ is obtained from $\gamma$ moving its starting point
to the next marked point on the same boundary, such that $_{s}\gamma$ is rotated in clockwise direction around the ending point of $\gamma.$
Similarly, denote by $\gamma_{e}$ the \emph{elementary move of $\gamma$ on its ending point}. Iterated elementary moves are denoted $_{s}\gamma_{e}=$ $_{s}(\gamma_{e})=(_{s}\gamma)_{e},\, _{s^{2}}\gamma=$ $_{s}(_{s}\gamma)$ and $\gamma_{e^{2}}=(\gamma_{e})_{e}$, respectively.
\end{defn}

For the follow-up discussion, we recall from \cite{BT2019} for the intuitional expression of elementary moves of positive bridging curves and peripheral curves in $A_{p,q}$.

\begin{itemize}
 \item [(1)] If $\gamma$ is a positive bridging curve in $A_{p,q}$, then $\gamma=\pi([(\frac{i}{q})_{\partial^{\prime}},\, (\frac{j}{p})_{\partial}])$ for some $i, j\in \mathbb{Z}$,
 and $$_{s}\gamma=\pi([(\frac{i-1}{q})_{\partial^{\prime}},\, (\frac{j}{p})_{\partial}]),\quad\; \gamma_{e}=\pi([(\frac{i}{q})_{\partial^{\prime}},\, (\frac{j+1}{p})_{\partial}]).$$
The corresponding picture in $\mathbb{U}$ is:

\begin{figure}[H]
\begin{tikzpicture}
\draw[-](0,0)--(6,0);
\draw[-](0,1.5)--(6,1.5);
\draw[->](1.5,1.8)--(2.5,1.8);
\draw[<-](2.7,-0.3)--(3.7,-0.3);
\node()at(-0.5,0){\footnotesize$\partial^{\prime}$};
\node()at(-0.5,1.5){\footnotesize$\partial$};
\node()at(2,0){\tiny$\bullet$};
\node()at(2.1,-0.36){\tiny$(\frac{i}{q})_{\partial^{\prime}}$};
\node()at(3,1.5){\tiny$\bullet$};
\node()at(3.1,1.85){\tiny$(\frac{j}{p})_{\partial}$};
\draw[->](2,0)--(2.98,1.4);
\node()at(1,0){\tiny$\bullet$};
\node()at(1.1,-0.36){\tiny$(\frac{i-1}{q})_{\partial^{\prime}}$};
\draw[->,green](1,0)--(2.98,1.4);
\node()at(4,1.5){\tiny$\bullet$};
\node()at(4.16,1.85){\tiny$(\frac{j+1}{p})_{\partial}$};
\draw[->,red](2,0)--(4,1.4);
\node()at(1.8,0.7){${\green _{s}\gamma}$};
\node()at(3.4,0.7){${\red \gamma_{e}}$};
\node()at(2.5,0.7){$\gamma$};
\end{tikzpicture}
\end{figure}

\item[(2)] If $\gamma$ is a peripheral curve in $A_{p,q}$ with endpoints on the inner boundary, then $\gamma=\pi([(\frac{i}{p})_{\partial}, (\frac{j}{p})_{\partial}])$ for some $i, j\in \mathbb{Z},\,j-i\geq 2,$ and
    $$_{s}\gamma=\pi([(\frac{i+1}{p})_{\partial}, (\frac{j}{p})_{\partial}])\,\,(j-i\,\textgreater\,\, 2),\quad\; \gamma_{e}=\pi([(\frac{i}{p})_{\partial}, (\frac{j+1}{p})_{\partial}]).$$
In $\mathbb{U}$, they look as follows:

\begin{figure}[H]
\begin{tikzpicture}
\draw[-](0,0)--(8,0);
\draw[-](0,1.5)--(8,1.5);
\draw[->](3.2,1.8)--(4.2,1.8);
\draw[<-](3.5,-0.3)--(4.5,-0.3);
\node()at(-0.5,0){\footnotesize$\partial^{\prime}$};
\node()at(-0.5,1.5){\footnotesize$\partial$};
\node()at(1,1.5){\tiny$\bullet$};
\node()at(6.1,1.5){\tiny$\bullet$};
\node()at(1,1.85){\tiny$(\frac{i}{p})_{\partial}$};
\node()at(5.15,1.85){\tiny$(\frac{j}{p})_{\partial}$};
\node()at(2,1.5){\tiny$\bullet$};
\node()at(5.1,1.5){\tiny$\bullet$};
\node()at(2.1,1.85){\tiny$(\frac{i+1}{p})_{\partial}$};
\node()at(6.2,1.85){\tiny$(\frac{j+1}{p})_{\partial}$};
\draw[<-](5,1.5)arc(-60:-120:4);
\draw[<-,green](5,1.5)arc(-60:-120:3);
\node()at(2.6,1){$\gamma$};
\node()at(3,1.3){${\green _{s}\gamma}$};
\draw[<-,red](6,1.5)arc(-60:-120:5);
\node()at(4.1,0.7){${\red\gamma_{e}}$};
\end{tikzpicture}
\end{figure}
\item [(3)] If $\gamma$ is a peripheral curve in $A_{p,q}$ with endpoints on the outer boundary, then $\gamma=\pi([(\frac{i}{q})_{\partial^{\prime}},\, (\frac{j}{q})_{\partial^{\prime}}])$ for some $i, j\in \mathbb{Z},\,j-i\geq 2$, and $$_{s}\gamma=\pi([(\frac{i-1}{q})_{\partial^{\prime}},\, (\frac{j}{q})_{\partial^{\prime}}]),\quad \;\gamma_{e}=\pi([(\frac{i}{q})_{\partial^{\prime}},\, (\frac{j-1}{q})_{\partial^{\prime}}])\,\,(j-i\,\textgreater\; 2).$$
In $\mathbb{U},$ they are described as follows:

\begin{figure}[H]
\begin{tikzpicture}
\draw[-](0,0)--(8,0);
\draw[-](0,1.5)--(8,1.5);
\draw[->](3.5,1.8)--(4.5,1.8);
\draw[<-](3,-0.3)--(4,-0.3);
\node()at(-0.5,0){\footnotesize$\partial^{\prime}$};
\node()at(-0.5,1.5){\footnotesize$\partial$};
\node()at(2,0){\tiny$\bullet$};
\node()at(6.1,0){\tiny$\bullet$};
\node()at(2.1,-0.36){\tiny$(\frac{i}{q})_{\partial^{\prime}}$};
\node()at(6.2,-0.36){\tiny$(\frac{j}{q})_{\partial^{\prime}}$};
\node()at(1,0){\tiny$\bullet$};
\node()at(5.1,0){\tiny$\bullet$};
\node()at(1.1,-0.36){\tiny$(\frac{i-1}{q})_{\partial^{\prime}}$};
\node()at(5.1,-0.36){\tiny$(\frac{j-1}{q})_{\partial^{\prime}}$};
\draw[<-](6,0)arc(60:120:4);
\node()at(4,0.5){$\gamma$};
\draw[<-,green](6,0)arc(60:120:5);
\node()at(1.6,0.4){${\green _{s}\gamma}$};
\draw[<-,red](5,0)arc(60:120:3);
\node()at(4.4,0.2){${\red \gamma_{e}}$};
\end{tikzpicture}
\end{figure}
\end{itemize}

For the sake of simplicity, from now on, we denote by
$$D^{\frac{i}{p}}_{\frac{j}{q}}:=[(\frac{j}{q})_{\partial^{\prime}}, (\frac{i}{p})_{\partial}],\;\;D^{\frac{i}{p}, \frac{j}{p}}:=[(\frac{i}{p})_{\partial}, (\frac{j}{p})_{\partial}]\;\;{\rm and}\;\;D_{\frac{i}{q}, \frac{j}{q}}:=[(\frac{i}{q})_{\partial^{\prime}}, (\frac{j}{q})_{\partial^{\prime}}]$$
for positive bridging arcs and peripheral arcs in $\mathbb{U}$,
and denote their image under $\pi: \mathbb{U}\to A_{p,q}$ as $[D^{\frac{i}{p}}_{\frac{j}{q}}], [D^{\frac{i}{p}, \frac{j}{p}}]$ and $[D_{\frac{i}{q}, \frac{j}{q}}]$ respectively. It follows that for any $k\in\mathbb{Z}$,
\begin{align}\label{periodicity of covering map}
[D^{\frac{i}{p}+k}_{\frac{j}{q}+k}]=[D^{\frac{i}{p}}_{\frac{j}{q}}],\;\;[D^{\frac{i}{p}+k,\, \frac{j}{p}+k}]=[D^{\frac{i}{p}, \frac{j}{p}}]\;\;{\rm and}\;\;[D_{\frac{i}{q}+k,\, \frac{j}{q}+k}]=[D_{\frac{i}{q}, \frac{j}{q}}].
\end{align}

\begin{prop}\label{irreducible morphism}
Let $X\in{\rm ind}({\rm coh}\mbox{-}\mathbb{X}(p,q))$ be a line bundle or a torsion sheaf supported at an exceptional point. Assume $\phi^{-1}(X)=\gamma$. Then
the irreducible morphisms in ${\rm coh}\mbox{-}\mathbb{X}(p,q)$ starting at $X$ are obtained by elementary moves on endpoints of $\gamma$. Moreover, the Auslander-Reiten sequence starting at $X$  has the form
$$0\longrightarrow \phi(\gamma)\longrightarrow \phi(_{s}\gamma)\oplus\phi(\gamma_{e})\longrightarrow\phi(_{s}\gamma_{e})\longrightarrow 0.$$
\end{prop}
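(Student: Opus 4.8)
The plan is to verify the proposition by going through the three types of $X$ allowed in the hypothesis and, in each case, matching the images under $\phi$ of $\gamma$, $_{s}\gamma$, $\gamma_{e}$, $_{s}\gamma_{e}$ against the (well-known) almost split sequence in ${\rm coh}\mbox{-}\mathbb{X}(p,q)$ starting at $X$. First I record the general principle I will use repeatedly: in any Auslander--Reiten sequence $0\to X\to E\to\tau^{-1}X\to 0$ the components of $X\to E$ onto the indecomposable summands of $E$ form a complete irredundant list of irreducible morphisms out of $X$; hence it is enough to show that the middle term of the almost split sequence starting at $X$ is $\phi(_{s}\gamma)\oplus\phi(\gamma_{e})$ and that its right-hand term is $\phi(_{s}\gamma_{e})$, and then both assertions follow simultaneously. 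Recall also that $\tau^{-1}=(\vec{x}_1+\vec{x}_2)$, since $\vec{\omega}=-(\vec{x}_1+\vec{x}_2)$.

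For the line bundle case, write $X=\co(\vec{x})$. I will use the classical fact (\cite{GL87,Len2011}) that the almost split sequence starting at $\co(\vec{x})$ is the Koszul-type sequence
\[
0\longrightarrow\co(\vec{x})\longrightarrow\co(\vec{x}+\vec{x}_1)\oplus\co(\vec{x}+\vec{x}_2)\longrightarrow\co(\vec{x}+\vec{x}_1+\vec{x}_2)\longrightarrow 0,
\]
noting $\co(\vec{x}+\vec{x}_1+\vec{x}_2)=\tau^{-1}\co(\vec{x})$. By Theorem \ref{corresponding} I may write $\gamma=\phi^{-1}(X)=\pi([D^{\frac{a}{p}}_{\frac{b}{q}}])$ with $\vec{x}=a\vec{x}_1-b\vec{x}_2$. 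Feeding the elementary move formulas for a positive bridging curve into the bijection \eqref{map} should give $\phi(_{s}\gamma)=\co(a\vec{x}_1-(b-1)\vec{x}_2)=\co(\vec{x}+\vec{x}_2)$, $\phi(\gamma_{e})=\co((a+1)\vec{x}_1-b\vec{x}_2)=\co(\vec{x}+\vec{x}_1)$, and $\phi(_{s}\gamma_{e})=\co(\vec{x}+\vec{x}_1+\vec{x}_2)$; comparing with the displayed sequence finishes this case.

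For the two tube cases, I will use that the almost split sequence starting at the indecomposable $S_{\lambda,i}^{(j)}$ of quasi-length $j$ in a stable tube has the form
\[
0\longrightarrow S_{\lambda,i}^{(j)}\longrightarrow S_{\lambda,i+1}^{(j+1)}\oplus S_{\lambda,i}^{(j-1)}\longrightarrow S_{\lambda,i+1}^{(j)}\longrightarrow 0,
\]
with the convention $S_{\lambda,i}^{(0)}:=0$ (so at the mouth of the tube there is exactly one irreducible morphism out, consistent with $_{s}\gamma$ being undefined there); here $\tau^{-1}S_{\lambda,i}^{(j)}=S_{\lambda,i+1}^{(j)}$ because the shift by $\vec{x}_1+\vec{x}_2$ sends $S_{\infty,i}$ to $S_{\infty,i+1}$ and $S_{0,i}$ to $S_{0,i+1}$ (Section \ref{wpl}). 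For $X=S_{\infty,i}^{(j)}$ one has $\gamma=\pi([D^{\frac{i-j-1}{p},\,\frac{i}{p}}])$, a peripheral curve with both ends on the inner boundary, and the inner-boundary elementary move formulas together with \eqref{map} should give $\phi(_{s}\gamma)=S_{\infty,i}^{(j-1)}$, $\phi(\gamma_{e})=S_{\infty,i+1}^{(j+1)}$, $\phi(_{s}\gamma_{e})=S_{\infty,i+1}^{(j)}$; for $X=S_{0,i}^{(j)}$ one has $\gamma=\pi([D_{\frac{-i}{q},\,\frac{j-i+1}{q}}])$, a peripheral curve with both ends on the outer boundary, and the outer-boundary formulas together with \eqref{map} should give $\phi(_{s}\gamma)=S_{0,i+1}^{(j+1)}$, $\phi(\gamma_{e})=S_{0,i}^{(j-1)}$, $\phi(_{s}\gamma_{e})=S_{0,i+1}^{(j)}$. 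In each subcase the resulting exact sequence is exactly the almost split one above, which completes the proof.

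I do not anticipate a conceptual difficulty here: the structural ingredient --- the shape of the almost split sequences for line bundles and for objects in a stable tube --- is classical. The step that needs the most care, and where I expect the work to really lie, is the combinatorial bookkeeping: reconciling the $D$-arc labelling used in \eqref{map} with the indices appearing in the pictures of elementary moves, keeping track of the $\frac{\mathbb{Z}}{p}$- and $\frac{\mathbb{Z}}{q}$-periodicities recorded in \eqref{periodicity of covering map}, and handling the degenerate case at the mouth of each tube. Aligning these conventions correctly is the main obstacle.
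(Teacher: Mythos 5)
Your proposal is correct and follows essentially the same route as the paper: a case analysis over line bundles and torsion sheaves at the two exceptional points, identifying $\phi(_{s}\gamma)$, $\phi(\gamma_{e})$, $\phi(_{s}\gamma_{e})$ via the elementary-move formulas and matching them against the known irreducible morphisms and Auslander--Reiten sequences (with the same treatment of the mouth-of-tube degeneration $\phi(_{s}\gamma)=0$ when $j=1$). Your index computations for the $\mathcal{U}_0$ case, where the roles of $_{s}\gamma$ and $\gamma_{e}$ swap relative to $\mathcal{U}_\infty$, check out against the conventions in \eqref{map}.
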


\begin{proof}
If $X$ is a line bundle, then $X=\co(i\vec{x}_1-j\vec{x}_2)$ for some $i, j\in\mathbb{Z}$, and then $\gamma=[D^{\frac{i}{p}}_{\frac{j}{q}}]$. Notice that the irreducible morphisms in ${\rm coh}\mbox{-}\mathbb{X}(p,q)$ starting at $\co(i\vec{x}_1-j\vec{x}_2)$ are given by
$$\co(i\vec{x}_1-j\vec{x}_2)\longrightarrow \co((i+1)\vec{x}_1-j\vec{x}_2)=\phi(\gamma_{e})$$ and $$\co(i\vec{x}_1-j\vec{x}_2)\longrightarrow \co(i\vec{x}_1-(j-1)\vec{x}_2)=\phi(_{s}\gamma),$$ and $$\tau^{-1}(\co(i\vec{x}_1-j\vec{x}_2))=(\co(i\vec{x}_1-j\vec{x}_2))(\vec{x}_1+\vec{x}_2)
=\co((i+1)\vec{x}_1-(j-1)\vec{x}_2)=\phi(_{s}\gamma_{e}).$$ Then the result follows.

If $X$ is a torsion sheaf supported at $\infty$, then $X=S_{\infty,i}^{(j)}$ for some $i\in\mathbb{Z}/p\mathbb{Z}$ and $j\in \mathbb{Z}_{\geq 1}$, and then $\gamma=[D^{\frac{i-j-1}{p},\frac{i}{p}}].$ If $j=1$, we have $\phi(_{s}\gamma)=0$ and there is only one irreducible morphism starting at $S_{\infty,i}^{(1)}$, that is, $S_{\infty,i}^{(1)}\longrightarrow S_{\infty,i+1}^{(2)}=\phi(\gamma_{e})$. If $j\geq 2$, there are two irreducible morphisms starting at $S_{\infty,i}^{(j)}$, given by $S_{\infty,i}^{(j)}\longrightarrow S_{\infty,i+1}^{(j+1)}=\phi(\gamma_{e})$ and $S_{\infty,i}^{(j)}\longrightarrow S_{\infty,i}^{(j-1)}=\phi(_{s}\gamma)$. Moreover, $\tau^{-1}(S_{\infty,i}^{(j)})=S_{\infty,i+1}^{(j)}=\phi(_{s}\gamma_{e}).$
Similarly, if $X$ is a torsion sheaf supported at the exceptional point $0$, we can also obtain the result. We are done.
\end{proof}

\begin{rem}
For any indecomposable torsion sheaf $X$ supported at an ordinary point $\lambda\in\mathbf k^{*}$, we know that $X=S_{\lambda}^{(j)}$ for some $j\in \mathbb{Z}_{\geq 1}$. By Theorem \ref{corresponding}, we have $\phi^{-1}(S_{\lambda}^{(j)})=(\lambda, L^{j}).$
If we denote by $_s(\lambda, L^{k})=(\lambda, L^{k-1})$, $(\lambda, L^{k})_{e}=(\lambda, L^{k+1})$ and $_{s}(\lambda, L^{k})_{e}=$ $_{s}((\lambda, L^{k})_{e})=(_{s}(\lambda, L^{k}))_{e}=(\lambda, L^{k})$, then Proposition \ref{irreducible morphism} holds for any $X\in{\rm ind}({\rm coh}\mbox{-}\mathbb{X}(p,q))$.
\end{rem}

\subsection{Translation quiver and equivalence between categories}
In this subsection, we define a translation quiver $(\Gamma_{A_{p,q}}, \tau^{\prime})$ associated to $\mathcal{C}.$ The vertices of the quiver are the oriented curves in $\mathcal{C}.$ There is an arrow $\gamma\rightarrow\gamma^{\prime}$ if and only if $\gamma^{\prime}=$ $_{s}\gamma$ or $\gamma_{e}.$ For the positive bridging curves and peripheral curves in $\mathcal{C},$ let the translation $\tau^{\prime}$ be induced by the map $(\frac{i}{p})_{\partial}\mapsto (\frac{i-1}{p})_{\partial}$ and $(\frac{j}{q})_{\partial^{\prime}}\mapsto (\frac{j+1}{q})_{\partial^{\prime}}$ for $i,\,j \in\mathbb{Z},$ that is,
$$\tau^{\prime}([D^{\frac{i}{p}}_{\frac{j}{q}}])=[D^{\frac{i-1}{p}}_{\frac{j+1}{q}}],\,\;
\tau^{\prime}([D^{\frac{i}{p}, \frac{j}{p}}])=[D^{\frac{i-1}{p}, \frac{j-1}{p}}],\;\,
\tau^{\prime}([D_{\frac{i}{q}, \frac{j}{q}}])=[D_{\frac{i+1}{q}, \frac{j+1}{q}}].$$
For the $\mathbf k^{*}$-parameterized $k$-cycles $(\lambda, L^{k})$ in $\mathcal{C},$ let
$\tau^{\prime}((\lambda, L^{k}))=(\lambda, L^{k}).$

Let $\mathcal{C}_{A_{p,q}}$ be the mesh category of the translation quiver $(\Gamma_{A_{p,q}}, \tau^{\prime})$. Let $F:\mathcal{C}_{A_{p,q}}\longrightarrow{\rm ind}({\rm coh}\mbox{-}\mathbb{X}(p,q))$ be the functor between mesh categories defined as follows. On objects, let $F(\gamma)=\phi(\gamma).$
To define $F$ on morphisms, it suffices to define it on the elementary moves. Define $F(\gamma\rightarrow$ $_{s}\gamma)$ (\emph{resp.} $F(\gamma\rightarrow \gamma_{e})$) to be the irreducible morphism $F(\gamma)\rightarrow F(_{s}\gamma)$ (\emph{resp.} $F(\gamma)\rightarrow F(\gamma_{e})$).

Let $(\Gamma_{{\rm coh}\mbox{-}\mathbb{X}(p,q)}, \tau)$ denote the Auslander-Reiten quiver of the category ${\rm coh}\mbox{-}\mathbb{X}(p,q).$

\begin{thm}\label{equivalence of categories}

The functor $F:\mathcal{C}_{A_{p,q}}\longrightarrow{\rm ind}({\rm coh}\mbox{-}\mathbb{X}(p,q))$ is an equivalence of categories. In particular,
\begin{itemize}
  \item[(1)] $F$ induces an isomorphism of translation quivers $(\Gamma_{A_{p,q}},\tau^{\prime})\longrightarrow (\Gamma_{{\rm coh}\mbox{-}\mathbb{X}(p,q)},\tau).$
\item[(2)] $\tau^{\prime}$ corresponds to the Auslander-Reiten translation $\tau$ in the following sense
      $$F \circ\tau^{\prime}=\tau\circ F.$$
  \item[(3)] $F$ is an exact functor with respect to the induced abelian structure on $\mathcal{C}_{A_{p,q}}.$
  \end{itemize}
\end{thm}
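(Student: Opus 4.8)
The plan is to establish the equivalence by checking that $F$ is a well-defined, fully faithful, dense functor between the two mesh categories, and then to verify the three itemized consequences, of which (1) and (2) are essentially bookkeeping and (3) carries the real content.

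First I would verify that $F$ is well-defined on morphisms. Since $\mathcal{C}_{A_{p,q}}$ is the mesh category of $(\Gamma_{A_{p,q}},\tau')$, a functor out of it is determined by its values on vertices and on arrows, subject to the mesh relations; so I must check that $F$ sends each mesh relation in $(\Gamma_{A_{p,q}},\tau')$ to a valid relation in $\operatorname{ind}({\rm coh}\mbox{-}\mathbb{X}(p,q))$. By Proposition \ref{irreducible morphism} (and the Remark extending it to homogeneous tubes), for every $\gamma\in\mathcal{C}$ the composite $\gamma\to{}_s\gamma\oplus\gamma_e\to{}_s\gamma_e$ is precisely the middle of an Auslander--Reiten sequence $0\to\phi(\tau'\gamma)\to\phi({}_s\gamma)\oplus\phi(\gamma_e)\to\phi(\gamma)\to 0$ once one matches indices via the formula for $\tau'$; hence the alternating sum of the two paths of irreducible morphisms around each mesh vanishes, which is exactly the mesh relation in the target. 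This also simultaneously proves (2): the defining formulas for $\tau'$ on $[D^{i/p}_{j/q}]$, $[D^{i/p,j/p}]$, $[D_{i/q,j/q}]$ and on $(\lambda,L^k)$ were chosen so that $\phi\circ\tau'=\tau\circ\phi$ on objects (compare with $\tau\mathcal{O}(\vec x)=\mathcal{O}(\vec x+\vec\omega)$ and $\tau S^{(j)}_{\lambda,i}=S^{(j)}_{\lambda,i-1}$ from Section \ref{wpl}), and $F=\phi$ on objects; compatibility on morphisms is automatic since both functors send arrows to the corresponding irreducible morphisms.

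Next I would prove $F$ is dense and bijective on objects: this is immediate from Theorem \ref{corresponding}, since $\phi:\mathcal{C}\to\operatorname{ind}({\rm coh}\mbox{-}\mathbb{X}(p,q))$ is already a bijection on isoclasses and $F$ agrees with $\phi$ on objects. For full faithfulness, the standard strategy for mesh categories applies: both $\mathcal{C}_{A_{p,q}}$ and $\operatorname{ind}({\rm coh}\mbox{-}\mathbb{X}(p,q))$ are (the additive hulls of) mesh categories of their respective translation quivers --- the latter because ${\rm coh}\mbox{-}\mathbb{X}(p,q)$ is a hereditary category whose Auslander--Reiten quiver has no oriented cycles on the vector bundle part and consists of tubes on the torsion part, so its morphisms are generated by irreducible ones modulo mesh relations (this uses the standard structure theory, e.g. \cite{GL87} together with the fact that weighted projective lines of type $(p,q)$ give tame hereditary categories). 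Since $F$ induces a bijection on vertices and on arrows of the translation quivers, and sends mesh relations to mesh relations, the induced map on Hom-spaces is an isomorphism in each degree; I would spell this out by comparing dimensions of $\operatorname{Hom}$-spaces computed combinatorially (number of paths modulo mesh relations) on both sides. Granting this, item (1) follows: $F$ restricts to a bijection on vertices and arrows that intertwines $\tau'$ and $\tau$, i.e.\ an isomorphism of translation quivers.

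The main obstacle --- and the part requiring genuine argument rather than bookkeeping --- is item (3), that $F$ is exact for the abelian structure transported from ${\rm coh}\mbox{-}\mathbb{X}(p,q)$ onto $\mathcal{C}_{A_{p,q}}$ via the equivalence. Strictly speaking, once $F$ is an equivalence of additive categories one can simply \emph{define} the abelian structure on $\mathcal{C}_{A_{p,q}}$ to be the one making $F$ exact, so the statement becomes a tautology; but the intended content is that this structure is the natural, geometrically meaningful one --- short exact sequences should correspond to recognizable configurations of curves (e.g.\ the sequences $0\to\mathcal{O}((i-1)\vec x_1)\to\mathcal{O}(i\vec x_1)\to S_{\infty,i}\to 0$ of \eqref{important exact sequence} realized by a positive bridging curve, its elementary move, and the resulting peripheral curve). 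So the real work here is to identify, for each of the distinguished short exact sequences in ${\rm coh}\mbox{-}\mathbb{X}(p,q)$ recalled in Section \ref{wpl}, the corresponding triple of objects in $\mathcal{C}$ and to note that conflations are generated under extension and the abelian operations by these, so that exactness of $F$ is equivalent to the (already established) compatibility of $F$ with the irreducible morphisms and the mesh relations. I would phrase the conclusion as: transport the abelian structure along the equivalence $F$, observe it is generated by the images of the exact sequences listed above, and thereby obtain all three assertions.
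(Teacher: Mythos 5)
Your proposal is correct and follows essentially the same route as the paper: the object bijection of Theorem \ref{corresponding} together with Proposition \ref{irreducible morphism} gives the isomorphism of translation quivers, the equivalence then follows because both sides are mesh categories, part (2) is a direct index check against the formulas defining $\tau^{\prime}$, and part (3) is the tautology you identify (the abelian structure on $\mathcal{C}_{A_{p,q}}$ is \emph{defined} by transport along $F$, so exactness is automatic). The only nitpick is a transient index slip in your mesh-relation display --- the Auslander--Reiten sequence ending at $\phi(\gamma)$ has middle term $\phi({}_{s}(\tau^{\prime}\gamma))\oplus\phi((\tau^{\prime}\gamma)_{e})$ rather than $\phi({}_{s}\gamma)\oplus\phi(\gamma_{e})$ --- which you already flag as needing to be matched via $\tau^{\prime}$.
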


\begin{proof}
By Theorem \ref{corresponding} and Proposition \ref{irreducible morphism}, we get the statement $(1)$. For the statement $(2),$ we only consider the case $F \circ\tau^{\prime}([D^{\frac{i}{p}}_{\frac{j}{q}}])=\tau\circ F([D^{\frac{i}{p}}_{\frac{j}{q}}])$, the others being similar. In fact,
\begin{flalign}
F\circ\tau^{\prime}([D^{\frac{i}{p}}_{\frac{j}{q}}])=F([D^{\frac{i-1}{p}}_{\frac{j+1}{q}}])=&\mathcal{O}((i-1)\vec{x}_1-(j+1)\vec{x}_2)
\nonumber\\
=&\tau(\mathcal{O}(i\vec{x}_1-j\vec{x}_2))\nonumber\\
=&\tau\circ F([D^{\frac{i}{p}}_{\frac{j}{q}}]).\nonumber
\end{flalign}
Therefore, the statement $(2)$ holds.

Since both categories $\mathcal{C}_{A_{p,q}}$ and ${\rm ind}({\rm coh}\mbox{-}\mathbb{X}(p,q))$ are the mesh categories of their translation quivers $(\Gamma_{A_{p,q}}, \tau^{\prime})$ and $(\Gamma_{{\rm coh}\mbox{-}\mathbb{X}(p,q)}, \tau)$ respectively, therefore, statement $(1)$ implies that $F$ is an equivalence of categories. In particular, this equivalence induces the structure of an abelian category on $\mathcal{C}_{A_{p,q}}$. With respect to this structure, $F$ is an exact functor since every equivalence between abelian categories is exact.
\end{proof}

\subsection{Positive intersection}

In this subsection, we want to give a geometric explanation for the dimension of extension group $\Ext^i(X,Y)$ for $X,Y\in{\rm ind}({\rm coh}\mbox{-}\mathbb{X}(p,q))$. Since ${\rm coh}\mbox{-}\mathbb{X}(p,q)$ is hereditary, the extension group is zero for $i\geq 2$. Hence we only need to consider the case $i=1$.

Note that if $X,Y$ are supported at distinct points, then $\Ext^1(X,Y)=0$. If they are supported at the same ordinary point, then $X=S_{\lambda}^{(j_1)}, Y=S_{\lambda}^{(j_2)}$ for some $\lambda\in\mathbf{k}^*$ and $j_1, j_2\in \mathbb{Z}_{\geq 1}$. In this case, ${\rm dim}_{\mathbf{k}}\Ext^1(X,Y)=\min\{j_1,j_2\}$. Hence, we only consider the remaining cases in the following.
We will show that ${\rm dim}_{\mathbf{k}}\Ext^1(X,Y)$ can be interpreted as the positive intersection number between the corresponding oriented curves in the marked annulus $A_{p,q}$.

\begin{defn}[\cite{W2008}]\label{positive intersection}
A point of intersection of two oriented curves $\alpha$ and $\beta$ in $A_{p,q}$ is called \emph{positive intersection} of $\alpha$ and $\beta$, if $\beta$ intersects $\alpha$ from the right, i.e., the picture looks like as:
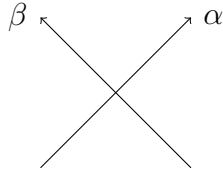
\begin{figure}[H]
\begin{tikzpicture}
\draw[->](0,0)--(2,2);
\draw[->](2,0)--(0,2);
\node()at(-0.3,2){$\beta$};
\node()at(2.3,2){$\alpha$};
\end{tikzpicture}
\caption{Positive intersection}
\end{figure}
\end{defn}
Denoted by
$$I^{+}(\alpha,\beta)=\mathop{{\rm min}}\limits_{\alpha^{\prime}\sim\alpha,\beta^{\prime}\sim\beta}|\alpha^{\prime}\cap^{+}\beta^{\prime}|,$$
called the \emph{positive intersection number} of $\alpha,\beta$, where $\alpha^{\prime}\cap^{+}\beta^{\prime}$ is the set of the positive intersections of $\alpha^{\prime}$ and $\beta^{\prime}$, excluding their endpoints, and the sign $a\sim b$ means that the two curves are homotopy. It follows that $I^{+}(\alpha,\beta)=0$ or 1 if $\alpha, \beta$ are oriented arcs in $\mathbb{U}$.

\begin{thm}\label{dimension and positive intersection}
Let $X, Y$ be two indecomposable objects in ${\rm coh}\mbox{-}\mathbb{X}(p,q).$
Assume $X, Y$ are not supported at ordinary points.
Then
\begin{equation}\label{positive intersection theorem}{\rm dim_{\mathbf{k}}Ext^{1}}(X,Y)=I^{+}(\phi^{-1}(X),\phi^{-1}(Y)).
\end{equation}
\end{thm}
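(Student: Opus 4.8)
The plan is to reduce the identity to a finite case check organized by the three ``shapes'' of curves appearing in $\mathcal{C}$: positive bridging curves (line bundles), peripheral curves on the inner boundary (tube of rank $p$), and peripheral curves on the outer boundary (tube of rank $q$). Since we have excluded ordinary points, both $X$ and $Y$ are line bundles $\mathcal{O}(\vec{x})$ or exceptional torsion sheaves $S_{\infty,i}^{(j)}$, $S_{0,i}^{(j)}$, and in each case we know $\dim_{\mathbf{k}}\Ext^1(X,Y)$ explicitly: by Serre duality (Proposition \ref{the properties of coherent sheaves}(1)) it equals $\dim_{\mathbf{k}}\Hom(Y,\tau X) = \dim_{\mathbf{k}}\Hom(Y, X(\vec{\omega}))$, and the Hom-spaces between line bundles are computed by Proposition \ref{the properties of coherent sheaves}(5) via the graded pieces $S(p,q)_{\vec{y}-\vec{x}}$, while Hom-spaces involving the exceptional tubes are read off from the uniserial structure of $\mathcal{U}_\infty$ and $\mathcal{U}_0$ recalled at the end of Section \ref{wpl}. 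On the geometric side, because all the relevant curves lift to \emph{arcs} in the universal cover $\mathbb{U}$, Theorem \ref{corresponding}'s parametrization lets us choose the distinguished lifts $D^{i/p}_{j/q}$, $D^{i/p,j/p}$, $D_{i/q,j/q}$ and compute $I^+$ directly in $\mathbb{U}$, where $I^+$ of two arcs is always $0$ or $1$.

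First I would set up the dictionary between the combinatorial data of an arc and the ``degree'' data of the corresponding sheaf, using the explicit bijection $\phi$ in \eqref{map}: a line bundle $\mathcal{O}(i\vec{x}_1 - j\vec{x}_2)$ corresponds to the arc $D^{i/p}_{j/q}$ with top endpoint at position $i/p$ on $\partial$ and bottom endpoint at $j/q$ on $\partial'$; $S_{\infty,i}^{(j)}$ corresponds to $D^{(i-j-1)/p,\,i/p}$; and $S_{0,i}^{(j)}$ corresponds to $D_{-i/q,\,(j-i+1)/q}$. Then, case by case, I would translate the algebraic Ext-dimension into an inequality among these endpoint coordinates. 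For instance, for two line bundles, $\Ext^1(\mathcal{O}(\vec{x}),\mathcal{O}(\vec{y})) \cong \mathrm{D}\Hom(\mathcal{O}(\vec{y}), \mathcal{O}(\vec{x}+\vec{\omega}))$, which is nonzero exactly when $\vec{y} \leq \vec{x} + \vec{\omega} = \vec{x} - \vec{x}_1 - \vec{x}_2$; writing $\vec{x} = i_1\vec{x}_1 - j_1\vec{x}_2$, $\vec{y} = i_2\vec{x}_1 - j_2\vec{x}_2$ this becomes an explicit condition on $(i_1-i_2, j_1-j_2)$, and I would check that the same condition is precisely when the two positive bridging arcs $D^{i_1/p}_{j_1/q}$ and $D^{i_2/p}_{j_2/q}$ have a positive crossing in $\mathbb{U}$ — which is a matter of comparing the cyclic order of their four endpoints on the two boundary lines together with the prescribed orientations. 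The mixed cases (line bundle vs.\ torsion, torsion vs.\ torsion in the same tube, torsion vs.\ torsion in different exceptional tubes) are handled the same way, using the exact sequences \eqref{important exact sequence} and the uniserial composition series to pin down the Hom- and Ext-dimensions, and using periodicity \eqref{periodicity of covering map} to normalize the lifts.

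An alternative, more structural route that avoids some of the casework: since Theorem \ref{equivalence of categories} gives an exact equivalence $F : \mathcal{C}_{A_{p,q}} \to {\rm ind}({\rm coh}\mbox{-}\mathbb{X}(p,q))$ identifying the translation quivers, and since $\Ext^1(X,Y) \cong \mathrm{D}\Hom(Y,\tau X)$, it suffices to prove the companion statement $\dim_{\mathbf{k}}\Hom(X,Y) = I^-(\phi^{-1}(X),\phi^{-1}(Y))$ (negative intersection) or equivalently to compute $\dim\Hom(Y,\tau X)$ geometrically and match it with the positive crossings of $\phi^{-1}(X)$ and $\phi^{-1}(Y)$; one then only needs to verify the formula on irreducible morphisms / elementary moves and propagate along meshes, checking compatibility with the mesh relations. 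I would still need the base cases but the bookkeeping is lighter. I would organize the final writeup around the direct coordinate computation (first route), since it is the most transparent and self-contained.

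\textbf{Main obstacle.} The principal difficulty is the geometric bookkeeping in the universal cover: correctly determining, from the four endpoint coordinates and the fixed orientations (bridging curves oriented $\partial' \to \partial$, peripheral curves with their prescribed direction), \emph{when} two arcs cross and whether the crossing is positive rather than negative, and making sure the minimal representatives in the homotopy class achieve $|\alpha'\cap^+\beta'| \in \{0,1\}$ with no hidden extra intersections. This is compounded by the wrap-around identifications \eqref{periodicity of covering map} and by the boundary cases where an arc degenerates (e.g.\ $S_{\infty,i}^{(1)}$, where $_s\gamma$ collapses) or where $X$ and $Y$ sit in the two \emph{different} exceptional tubes at $\infty$ and $0$ — there the Ext-dimension vanishes for support reasons and I must confirm the corresponding inner- and outer-boundary peripheral arcs can be pulled apart to have no positive crossing. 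Once a clean normal-form choice of lifts is fixed, each case reduces to an elementary inequality check, so the obstacle is organizational rather than deep.
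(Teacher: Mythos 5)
Your overall strategy --- splitting into cases by the shapes of the curves, computing $\dim_{\mathbf k}\Ext^1$ via Serre duality and the graded pieces $S(p,q)_{\vec y-\vec x}$, and comparing with an intersection count carried out in the universal cover --- is the same as the paper's. But there is a genuine gap on the geometric side: you assert that $I^+$ of the relevant curves is always $0$ or $1$ and that each case ``reduces to an elementary inequality check,'' i.e.\ to deciding \emph{whether} two arcs cross positively. That is only true for a single pair of arcs in $\mathbb{U}$. The theorem concerns curves in $A_{p,q}$, and there $I^{+}(\phi^{-1}(X),\phi^{-1}(Y))$ is computed by fixing one lift of $\phi^{-1}(Y)$ and counting positive crossings against \emph{all} $\mathbb{Z}$-translates of a lift of $\phi^{-1}(X)$; the result is a cardinality $|\{m\in\mathbb{Z}\mid \text{two inequalities in } m \text{ hold}\}|$ and can be arbitrarily large. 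Correspondingly the algebraic side is not binary: $\dim_{\mathbf k}\Ext^1(\co(\vec x),\co(\vec y))$ grows linearly in the $\vec c$-component of $\vec x-\vec y$ (the paper's case (1) yields $l-k+1$, $l-k$, or $l-k-1$), and for $S_{\infty,i}^{(j)}$ against a line bundle it is $\lfloor j/p\rfloor$ or $\lfloor j/p\rfloor+1$. Your ``inequality among endpoint coordinates'' only captures the nonvanishing criterion $\vec y\leq\vec x+\vec\omega$, not the dimension; and your remark in the ``main obstacle'' paragraph that the minimal representatives achieve $|\alpha'\cap^{+}\beta'|\in\{0,1\}$ confirms the misconception --- as written the argument would fail in every case where the Ext-dimension exceeds $1$.

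The repair is exactly what the paper does: write the Ext-dimension in normal-form coordinates as an explicit integer, express the positive-crossing condition for the $m$-th translate as a pair of inequalities in $m$, and check that the number of admissible $m$ equals that integer in each of the finitely many subcases. Two smaller points: for $X,Y$ both in the exceptional tubes the paper simply cites the Baur--Marsh geometric model of tube categories rather than redoing the uniserial computation by hand (your plan is workable but unnecessary), and your alternative ``propagate along meshes'' route would still have to confront the same unbounded multiplicities, since the mesh category argument alone does not control $\dim\Hom$ between objects far apart in the Auslander--Reiten quiver.
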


\begin{proof}
If $X, Y\in \mathcal{U}_{\infty}\coprod \mathcal{U}_{0}$, then the result follows from \cite[Theorem 3.8]{BM2012}.
If $X\in{\rm vec}\mbox{-}\mathbb{X}(p,q)$ and $Y\in \mathcal{U}_{\infty}\coprod \mathcal{U}_{0}$,
then according to Proposition \ref{the properties of coherent sheaves} (2), we have ${\rm Ext}^{1}(X, Y)=0$, and by Definition \ref{positive intersection}, we also have $I^{+}(\phi^{-1}(X),\phi^{-1}(Y))=0.$
So we only need to consider the following two cases:

\vspace{1mm}(1) $X, Y\in {\rm vec}\mbox{-}\mathbb{X}(p,q)$.

\vspace{2mm} By Proposition \ref{the properties of coherent sheaves} (3), we know that each indecomposable bundle over $\mathbb{X}(p,q)$ is a line bundle, thus $X=\co(\vec{x})$ and $Y=\co(\vec{y})$ for some $\vec{x},\,\vec{y}\in\mathbb{L}(p,q).$
Write $\vec{x}=l_1\vec{x}_1+l_2\vec{x}_2+l\vec{c}$ and $\vec{y}=k_1\vec{x}_1+k_2\vec{x}_2+k\vec{c}$ in normal forms.
According to Proposition \ref{the properties of coherent sheaves} $(1)$ and $(5)$,
we have
$$\Ext^{1}(\co(\vec{x}),\co(\vec{y}))\cong D\Hom(\co(\vec{y}), \co(\vec{x}-\vec{x}_1-\vec{x}_2))\cong S_{\vec{x}-\vec{x}_1-\vec{x}_2-\vec{y}}.$$
Observe that $\vec{x}-\vec{x}_1-\vec{x}_2-\vec{y}=(l_1-k_1-1)\vec{x}_1+(l_2-k_2-1)\vec{x}_2+(l-k)\vec{c},$ and $-p\leq l_1-k_1-1\leq p-2,\,\, -q\leq l_2-k_2-1\leq q-2.$ Denote $h_{1}:=l_1-k_1-1$ and $h_{2}:=l_2-k_2-1$, we get
\begin{equation}
{\rm dim_{\mathbf{k}}Ext^{1}}(X,Y)=
  \left\{
  \begin{array}{ll}

    l-k+1& \;{\rm if}\;\,0\leq h_{1}\leq p-2,\, 0\leq h_{2}\leq q-2,\, k\leq l;\nonumber\\
   l-k& \;{\rm if}\;\,-p\leq h_{1}\leq -1,\, 0\leq h_{2}\leq q-2,\, k\leq l;\\
   l-k& \;{\rm if}\;\,0\leq h_{1}\leq p-2,\, -q\leq h_{2}\leq -1,\, k\leq l;\\
   l-k-1& \;{\rm if}\;-p\leq h_{1}\leq -1,\, -q\leq h_{2}\leq -1,\, k+1\leq l;\\
   0& {\rm otherwise.}
  \end{array}
\right.\end{equation}

On the other hand, we consider $I^{+}(\phi^{-1}(X),\phi^{-1}(Y)).$ Since  $\phi^{-1}(X)=[D^{\frac{l_1}{p}}_{-\frac{l_2}{q}-l}]$ and $\phi^{-1}(Y)=[D^{\frac{k_1}{p}}_{-\frac{k_2}{q}-k}].$
By similar arguments as in \cite{BM2012}, a point of intersection of $\phi^{-1}(X)$ and $\phi^{-1}(Y)$ is positive if and only if
there exists $m\in \mathbb{Z}$ such that $\frac{l_1}{p}+m > \frac{k_1}{p}$ and $-\frac{l_2}{q}-l+m < -\frac{k_2}{q}-k$ (c.f. Figure \ref{case1}).

\begin{figure}[H]
\begin{tikzpicture}
\draw[->](0,0)--(2,2);
\draw[->](2,0)--(0,2);
\node()at(-0.1,2.3){$\frac{k_1}{p}$};
\node()at(2.1,2.3){$\frac{l_1}{p}+m$};
\node()at(-0.1,-0.3){$-\frac{l_2}{q}-l+m$};
\node()at(2.1,-0.3){$-\frac{k_2}{q}-k$};
\end{tikzpicture}
\caption{}\label{case1}
\end{figure}
\noindent These two inequalities can be combined into $$\frac{k_1-l_1}{p}\;\textless\; m\;\textless \;\frac{q(l-k)+l_2-k_2}{q}.$$

\noindent Therefore,
$$I^{+}(\phi^{-1}(X),\phi^{-1}(Y))=|\{m\in\mathbb{Z}|\frac{k_1-l_1}{p}\;\textless\; m\;\textless \;\frac{q(l-k)+l_2-k_2}{q}\}|.$$

In order to prove that ${\rm dim_{\mathbf{k}}Ext^{1}}(X,Y)=I^{+}(\phi^{-1}(X),\phi^{-1}(Y))$, we only consider the case: $0\leq h_{1}\leq p-2,\, 0\leq h_{2}\leq q-2,\, k\leq l$; the others being similar. In this case, we have
$$I^{+}(\phi^{-1}(X),\phi^{-1}(Y))=|\{m\in\mathbb{Z}|0\leq m\leq l-k\}|=l-k+1={\rm dim_{\mathbf{k}}Ext^{1}}(X,Y).$$
We are done.

(2) $X\in \mathcal{U}_{\infty}\coprod \mathcal{U}_{0}$ and $Y\in {\rm vec}\mbox{-}\mathbb{X}(p,q)$.

We only consider the case $X\in \mathcal{U}_{\infty}$, that is, $X=S_{\infty,i}^{(j)}$ for $i\in\mathbb{Z}/p\mathbb{Z},\,j \in \mathbb{Z}_{\geq 1}$, the case $X\in \mathcal{U}_{0}$ being similar.

By Proposition \ref{the properties of coherent sheaves} (3), $Y=\co(\vec{x})$ for some $\vec{x}\in\mathbb{L}(p,q),$ write $\vec{x}=l_1\vec{x}_1+l_2\vec{x}_2+l\vec{c}$ in normal form. By exact sequence (\ref{important exact sequence}), we get ${\rm dim_{\mathbf{k}}Hom}(\co(\vec{x}+\vec{x}_1+\vec{x}_2), S_{\infty,l_1+1})=1.$
Notice that $S_{\infty,i}^{(j)}$ has a composition series of the form
$$S_{\infty, i-j+1} \hookrightarrow S_{\infty, i-j+2}^{(2)} \hookrightarrow \cdots\hookrightarrow S_{\infty, i-2}^{(j-2)} \hookrightarrow S_{\infty, i-1}^{(j-1)}\hookrightarrow  S_{\infty, i}^{(j)}$$
with $S_{\infty, i-k}^{(j-k)}/S_{\infty, i-k-1}^{(j-k-1)}\cong S_{\infty, i-k}$ for $0\leq k\leq j-2.$
Write $j=np+r$ with $0\leq r\leq p-1.$ We get
\begin{equation}
{\rm dim_{\mathbf{k}}Ext^{1}}(S_{\infty,i}^{(j)},\co(\vec{x}))=
  \left\{
  \begin{array}{ll}
   n& \;{\rm if}\; p\nmid i-l_1-k\;{\rm for\;each}\;1\leq k\leq r,\nonumber\\
 n+1& {\rm otherwise}.
  \end{array}
\right.\end{equation}

On the other hand, we consider $I^{+}(\phi^{-1}(S_{\infty,i}^{(j)}),\phi^{-1}(\co(\vec{x}))).$ Since
$$\phi^{-1}(S_{\infty,i}^{(j)})=[D^{\frac{i-j-1}{p},\frac{i}{p}}],\quad\quad\phi^{-1}(\co(\vec{x}))=[D^{\frac{l_1}{p}}_{-\frac{l_2}{q}-l}].$$ By similar arguments as in \cite{BM2012}, a point of intersection of $\phi^{-1}(S_{\infty,i}^{(j)})$ and $\phi^{-1}(\co(\vec{x}))$ is positive if and only if there exists $m\in\mathbb{Z}$ such that $\frac{i-j-1}{p}\;\textless\; \frac{l_1}{p}+m\;\textless \; \frac{i}{p}$, (c.f. Figure \ref{case2}). \begin{figure}[H]
\begin{tikzpicture}
\draw[->](0.8,-0.4)--(2,2.2);
\draw[->](0.1,2)arc(240:300:3.2);
\node()at(-0.1,2.3){$\frac{i-j-1}{p}$};
\node()at(2.1,2.5){$\frac{l_1}{p}+m$};
\node()at(3.45,2.3){$\frac{i}{p}$};
\node()at(0.6,-0.75){$-\frac{l_2}{q}-l+m$};
\end{tikzpicture}
\caption{}\label{case2}
\end{figure}
{\noindent Hence $$I^{+}(\phi^{-1}(S_{\infty,i}^{(j)}),\phi^{-1}(\co(\vec{x})))=|\{m\in\mathbb{Z}|i-j-l_1\leq pm \leq i-l_1-1\}|.
$$
Observe that
$$\lfloor \frac{(i-l_1-1)-(i-j-l_1)+1}{p}\rfloor
=\lfloor \frac{j}{p}\rfloor
=\lfloor \frac{np+r}{p}\rfloor
=n+\lfloor \frac{r}{p}\rfloor=n.
$$
It follows that
\begin{equation*}
I^{+}(\phi^{-1}(S_{\infty,i}^{(j)}),\phi^{-1}(\co(\vec{x})))=
  \left\{
  \begin{array}{ll}
   n& \;{\rm if}\; p\nmid i-l_1-k\;{\rm for\;each}\;1\leq k\leq r,\nonumber\\
 n+1& {\rm otherwise}.
  \end{array}
\right.\end{equation*}}

\noindent Hence ${\rm dim_{\mathbf{k}}Ext^{1}}(X,Y)=I^{+}(\phi^{-1}(X),\phi^{-1}(Y))$.
This finishes the proof.
\end{proof}

\begin{rem}
If one of $X,Y$ is supported at an ordinary point, then \eqref{positive intersection theorem} still holds.
\end{rem}

\begin{prop}\label{exact seq}
Let $\alpha,\beta$ be positive bridging arcs or peripheral arcs in $\mathbb{U}$ with $I^{+}(\alpha,\beta)=1$. Then there is a natural short exact sequence in $ {\rm coh}\mbox{-}\mathbb{X}(p,q)$ associated to the intersection in geometric terms. More precisely, consider the following figure of the chosen intersection, 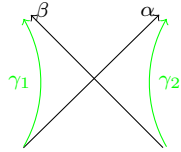
\begin{figure}[H]
\begin{tikzpicture}
\node(1)at(-0.1,-0.06){\tiny{}};
\node(2)at(2.05,-0.06){\tiny{}};
\node(3)at(0,2){\tiny{}};
\node(4)at(2,2){\tiny{}};
\draw[->](1)--(4);
\draw[->](2)--(3);
\draw[->,green](0.05,0.1) arc(330:390:1.7);
\draw[->,green](1.95,0.1) arc(210:150:1.7);
\node()at(1.7,1.9){\tiny{$\alpha$}};
\node()at(0.3,1.9){\tiny{$\beta$}};
\node()at(2,0.95){\tiny{${\green \gamma_2}$}};
\node()at(0,0.95){\tiny{${\green \gamma_1}$}};
\end{tikzpicture}
\caption{Short exact sequence}\label{labelx}
\end{figure}
\noindent we have a short exact sequence
\begin{align}\label{exact sequence}
0\longrightarrow \phi(\pi(\beta))\longrightarrow \phi(\pi(\gamma_1))\oplus\phi(\pi(\gamma_2))\longrightarrow\phi(\pi(\alpha))\longrightarrow 0.
\end{align}
\end{prop}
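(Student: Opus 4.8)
The plan is to enumerate the possible configurations of the pair $(\alpha,\beta)$, and in each one to identify the four sheaves via the dictionary (\ref{map}) and then exhibit the sequence (\ref{exact sequence}) by hand. Since an arc of $\mathbb{U}$ is determined up to homotopy by its ordered pair of endpoints, the hypothesis $I^{+}(\alpha,\beta)=1$ means that $\alpha=[a_0,a_1]$ and $\beta=[b_0,b_1]$ cross exactly once, and the resolution pictured in Figure \ref{labelx} is $\gamma_1=[a_0,b_1]$ and $\gamma_2=[b_0,a_1]$. Inspecting the orientations against the sign convention of Definition \ref{positive intersection}, one sees that, up to the symmetry $\partial\leftrightarrow\partial'$, the only configurations carrying a positive crossing are: (i) $\alpha$ and $\beta$ both bridging; (ii) $\alpha$ peripheral on $\partial$ and $\beta$ bridging; (iii) $\alpha$ and $\beta$ both peripheral on $\partial$. (Two peripheral arcs on different boundaries can be separated inside $\mathbb{U}$; a bridging arc and a peripheral arc on $\partial$ cross with the opposite sign --- consistent with $\Ext^{1}(\text{bundle},\text{torsion})=0$ --- so that configuration occurs only with the peripheral arc in the role of $\alpha$.)

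For case (i), write $\beta=[(\frac{j_\beta}{q})_{\partial'},(\frac{i_\beta}{p})_{\partial}]$ and $\alpha=[(\frac{j_\alpha}{q})_{\partial'},(\frac{i_\alpha}{p})_{\partial}]$. Positivity of the crossing forces $s:=i_\alpha-i_\beta\geq 1$ and $t:=j_\beta-j_\alpha\geq 1$, and (\ref{map}) gives $\phi(\pi(\beta))=\co(\vec{u})$, $\phi(\pi(\gamma_1))=\co(\vec{u}+t\vec{x}_2)$, $\phi(\pi(\gamma_2))=\co(\vec{u}+s\vec{x}_1)$ and $\phi(\pi(\alpha))=\co(\vec{u}+s\vec{x}_1+t\vec{x}_2)$, where $\vec{u}:=i_\beta\vec{x}_1-j_\beta\vec{x}_2$. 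Applying the sheafification functor to the Koszul complex of the regular sequence $(x_1^{s},x_2^{t})$ in $S(p,q)$ kills the finite-dimensional module $S(p,q)/(x_1^{s},x_2^{t})$ and leaves the short exact sequence $0\to\co(-s\vec{x}_1-t\vec{x}_2)\to\co(-s\vec{x}_1)\oplus\co(-t\vec{x}_2)\to\co\to 0$ in $\coh\mathbb{X}(p,q)$; twisting it by $\vec{u}+s\vec{x}_1+t\vec{x}_2$ reproduces (\ref{exact sequence}) summand for summand.

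For case (ii), let $\phi(\pi(\beta))=\co(\vec{x})$ with $\vec{x}=l_1\vec{x}_1+l_2\vec{x}_2+l\vec{c}$ in normal form, and $\phi(\pi(\alpha))=S_{\infty,i}^{(j)}$. Computing the crossing as in the proof of Theorem \ref{dimension and positive intersection}(2) forces $l_1\in\{i-j,\dots,i-1\}$; set $k:=i-l_1\in\{1,\dots,j\}$, so that $\phi(\pi(\gamma_2))=\co(\vec{x}+k\vec{x}_1)$ and $\phi(\pi(\gamma_1))=S_{\infty,l_1}^{(j-k)}$, the latter being $0$ exactly when $\gamma_1$ degenerates, i.e.\ $k=j$. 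Iterating (\ref{important exact sequence}) produces $0\to\co(\vec{x})\xrightarrow{x_1^{k}}\co(\vec{x}+k\vec{x}_1)\to S_{\infty,i}^{(k)}\to 0$, while the composition series of $S_{\infty,i}^{(j)}$ produces $0\to S_{\infty,i-k}^{(j-k)}\to S_{\infty,i}^{(j)}\to S_{\infty,i}^{(k)}\to 0$; the pull-back $P$ of $\co(\vec{x}+k\vec{x}_1)\twoheadrightarrow S_{\infty,i}^{(k)}\twoheadleftarrow S_{\infty,i}^{(j)}$ then fits both into $0\to\co(\vec{x})\to P\to S_{\infty,i}^{(j)}\to 0$ and into $0\to S_{\infty,i-k}^{(j-k)}\to P\to\co(\vec{x}+k\vec{x}_1)\to 0$, and the second sequence splits since $\Ext^{1}(\text{bundle},\text{torsion})=0$ by Serre duality and Proposition \ref{the properties of coherent sheaves}(2). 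As $i-k=l_1$, this gives $P\cong S_{\infty,l_1}^{(j-k)}\oplus\co(\vec{x}+k\vec{x}_1)$, so the first sequence is (\ref{exact sequence}) (for $k=j$ it is just the iterated (\ref{important exact sequence})). Case (iii) takes place entirely inside the stable tube $\mathcal{U}_{\infty}$ of rank $p$, where the sequence attached to a crossing is the one furnished by the geometric model of tube categories in \cite{BM2012}; alternatively, it follows from the same pull-back argument carried out inside the uniserial category $\mathcal{U}_{\infty}$.

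The sequence is ``natural'' in the required sense because in each case the middle term was read off from the four endpoints $a_0,a_1,b_0,b_1$ and the local picture at the crossing alone. The step I expect to be the main obstacle is the bookkeeping in case (ii): translating the position of the crossing in $\mathbb{U}$ into the correct shift $k$, keeping the index conventions of (\ref{map}) for peripheral arcs straight, handling the degenerate configuration $k=j$ where $\gamma_1$ collapses, and verifying once and for all that the orientation conventions of Figure \ref{labelx} place $\phi(\pi(\gamma_1))$ and $\phi(\pi(\gamma_2))$ into (\ref{exact sequence}) in the displayed order rather than interchanged.
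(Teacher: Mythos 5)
Your proposal is correct and follows essentially the same route as the paper: the same case division by arc types (both bridging; peripheral $\alpha$ against bridging $\beta$; both peripheral, deferred to the tube-category literature), with the same identification of the four sheaves $\phi(\pi(\alpha)),\phi(\pi(\beta)),\phi(\pi(\gamma_1)),\phi(\pi(\gamma_2))$. The only difference is that where the paper simply asserts the required short exact sequences in the first two cases, you actually construct them (via the sheafified Koszul complex of the regular sequence $(x_1^{s},x_2^{t})$, resp.\ the pull-back over the common quotient $S_{\infty,i}^{(k)}$ combined with the vanishing $\Ext^{1}(\text{bundle},\text{torsion})=0$), which is a welcome strengthening rather than a divergence.
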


\begin{proof}
We discuss case by case according to the types of $\alpha$ and $\beta$. If $\alpha,\,\beta$ are peripheral arcs, then the result follows from \cite[Section 6]{BBM2014} and \cite[Remark 4.25]{W2008}. Hence we only consider the following two cases.

(1) $\alpha,\, \beta$ are positive bridging arcs.

Then $\alpha=D^{\frac{i}{p}}_{\frac{j}{q}}$ and $\beta=D^{\frac{k}{p}}_{\frac{l}{q}}$ for some integers $k\textless i$ and $j\textless l.$ Hence $\gamma_1=D^{\frac{k}{p}}_{\frac{j}{q}}$ and $\gamma_2=D^{\frac{i}{p}}_{\frac{l}{q}}$.
Note that $$\phi(\pi(\alpha))=\mathcal{O}(i\vec{x}_1-j\vec{x}_2),\; \quad \phi(\pi(\beta))=\mathcal{O}(k\vec{x}_1-l\vec{x}_2).$$
And we have the following short exact sequence in ${\rm coh}\mbox{-}\mathbb{X}(p,q)$:
$$0\longrightarrow \mathcal{O}(k\vec{x}_1-l\vec{x}_2)\longrightarrow \mathcal{O}(k\vec{x}_1-j\vec{x}_2)\oplus\mathcal{O}(i\vec{x}_1-l\vec{x}_2)\longrightarrow\mathcal{O}(i\vec{x}_1-j\vec{x}_2)\longrightarrow 0.$$
Then (\ref{exact sequence}) holds since $$\phi(\pi(\gamma_1))=\mathcal{O}(k\vec{x}_1-j\vec{x}_2),\; \quad \phi(\pi(\gamma_2))=\mathcal{O}(i\vec{x}_1-l\vec{x}_2).$$

(2) $\alpha$ is a peripheral arc and $\beta$ is a positive bridging arc.

\vspace{1mm} We only consider the case that both the endpoints of $\alpha$ are in the upper boundary of $\mathbb{U}$, the other being similar.

By assumption, $\alpha=D^{\frac{i-j-1}{p},\,\frac{i}{p}}$ and $\beta=D^{\frac{k}{p}}_{\frac{l}{q}}$ for some $i,j, k, l\in \mathbb{Z}$ with $i-j-1\;\textless \; k\textless\;i.$ Then we have
$$\phi(\pi(\alpha))=S_{\infty,i}^{(j)},\; \phi(\pi(\beta))=\mathcal{O}(k\vec{x}_1-l\vec{x}_2).$$
In ${\rm coh}\mbox{-}\mathbb{X}(p,q)$, there is a short exact sequence:
$$0\longrightarrow \mathcal{O}(k\vec{x}_1-l\vec{x}_2)\longrightarrow \mathcal{O}(i\vec{x}_1-l\vec{x}_2)\oplus S_{\infty, k}^{(k+j-i)}\longrightarrow S_{\infty,i}^{(j)}\longrightarrow 0.$$
Therefore, the short exact sequence (\ref{exact sequence}) holds.
\end{proof}

\section{The geometric interpretation of tilting sheaf}

In this section, we investigate the correspondence between tilting sheaves in the category ${\rm coh}\mbox{-}\mathbb{X}(p,q)$ and triangulations in the marked annulus $A_{p,q}$, and then study the relationship between the flip of an arc in a triangulation and the mutation of indecomposable direct summand of the corresponding tilting sheaf.

\subsection{Tilting sheaves and triangulations}
First let us recall from \cite{GL87, GengSF2020} for the definition of tilting sheaves and tilting bundles in ${\rm coh}\mbox{-}\mathbb{X}(p,q)$ as follows.

\begin{defn}\label{tilting} A sheaf $T$ in ${\rm coh}\mbox{-}\mathbb{X}(p,q)$ is called a \emph{tilting sheaf} if

\begin{itemize}
  \item[(1)] $T$ is rigid, that is, ${\rm Ext}^{1}(T, T)=0$.
  \item[(2)] For any object $X\in{\rm coh}\mbox{-}\mathbb{X}(p,q)$, the condition ${\rm Hom}(T,X)=0={\rm Ext}^{1}(T,X)$ implies that $X=0$.
\end{itemize}

Moreover, if a tilting sheaf $T$ is given by a vector bundle, i.e., $T$ has no direct summand of finite length, then $T$ is called a \emph{tilting bundle}.
\end{defn}

\begin{prop}[\cite{HR1999}]\label{tilting2}
 Assume that $\mathcal{H}$ is a hereditary abelian category with a tilting object. Then $T$ is a tilting object in $\mathcal{H}$ if and only if ${\rm Ext}^{1}(T,T)=0$ and the number of non-isomorphic indecomposable direct summands of $T$ equals the rank of the Grothendieck group $K_{0}(\mathcal {H})$.
\end{prop}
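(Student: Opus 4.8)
The plan is to prove the two implications separately: the ``only if'' direction is immediate from the definitions together with a Grothendieck-group count, while the ``if'' direction rests on a Bongartz-type completion. For both I would first record the structural fact underlying everything: a hereditary abelian category $\mathcal{H}$ with a tilting object $T_{0}$ carries, via $\mathbf{R}\Hom(T_{0},-)$, a triangle equivalence $D^{b}(\mathcal{H})\xrightarrow{\sim}D^{b}({\rm mod}\,\Lambda)$ with $\Lambda=\End(T_{0})$ a finite-dimensional hereditary algebra (the tilting theorem, using ${\rm gl.dim}\,\Lambda\leq 2$ for $\mathcal{H}$ hereditary). In particular $K_{0}(\mathcal{H})$ is free of finite rank $n$ equal to the number of simple $\Lambda$-modules. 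In the situation of the paper, $\mathcal{H}=\coh\bbX(p,q)$ and one may take for $\Lambda$ the canonical algebra $\tilde{A}_{p,q}$ of affine type $A$ (Geigle--Lenzing \cite{GL87}), so that $n=p+q$ by Proposition \ref{the properties of coherent sheaves}(6).

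For the ``only if'' direction, suppose $T$ is a tilting object. Then $\Ext^{1}(T,T)=0$ is part of Definition \ref{tilting}, so only the count of indecomposable summands needs an argument. Since $\mathcal{H}$ is hereditary, the tilting theorem applies to $T$ itself and yields a triangle equivalence $D^{b}(\mathcal{H})\xrightarrow{\sim}D^{b}({\rm mod}\,\End(T))$; passing to Grothendieck groups gives $\mathop{\rm rk}K_{0}(\mathcal{H})=\mathop{\rm rk}K_{0}({\rm mod}\,\End(T))$, and the right-hand side is exactly the number of isomorphism classes of indecomposable direct summands of $T$. Hence that number equals $\mathop{\rm rk}K_{0}(\mathcal{H})$, as required.

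For the ``if'' direction, assume $\Ext^{1}(T,T)=0$ and that $T$ has exactly $n=\mathop{\rm rk}K_{0}(\mathcal{H})$ pairwise non-isomorphic indecomposable direct summands; I must verify Definition \ref{tilting}(2). Transport $T$ along $D^{b}(\mathcal{H})\simeq D^{b}({\rm mod}\,\Lambda)=K^{b}({\rm proj}\,\Lambda)$ to a perfect complex $\bar{T}$. Because $\mathcal{H}$ is hereditary, $\bar{T}$ is self-orthogonal (${\rm Hom}_{D^{b}}(\bar{T},\bar{T}[m])=0$ for $m<0$ automatically, and for $m=1$ since $\Ext^{1}(T,T)=0$), i.e.\ a partial tilting complex, still with $n$ indecomposable summands. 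The key step is a Bongartz completion: every partial tilting complex over the hereditary algebra $\Lambda$ is a direct summand of a tilting complex $\bar{T}\oplus\bar{T}'$. Applying the count from the ``only if'' direction to $\bar{T}\oplus\bar{T}'$ shows that it has exactly $n$ indecomposable summands, so $\bar{T}'\in{\rm add}\,\bar{T}$ and $\bar{T}$ is itself a tilting complex; transporting back, $\mathbf{R}\Hom_{\mathcal{H}}(T,-)\colon D^{b}(\mathcal{H})\to D^{b}(\End(T))$ is a triangle equivalence. Finally, if $X\in\mathcal{H}$ satisfies ${\rm Hom}(T,X)=\Ext^{1}(T,X)=0$, then $\mathbf{R}\Hom_{\mathcal{H}}(T,X)=0$ (again $\mathcal{H}$ hereditary), hence $X\cong 0$ in $D^{b}(\mathcal{H})$ and therefore $X=0$, which is precisely Definition \ref{tilting}(2).

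The main obstacle is the Bongartz completion invoked in the ``if'' direction. In the classical module-category setting it is Bongartz's universal-extension construction, which builds in the free module as a progenerator; here $\mathcal{H}$ (e.g.\ $\coh\bbX(p,q)$) has no projective generator, so one must either perform the completion inside $K^{b}({\rm proj}\,\Lambda)$ for a hereditary algebra $\Lambda$ coming from the ambient tilting object (where presilting complexes complete to tilting complexes), or appeal to the structure theory of hereditary categories with a tilting object to reduce to the hereditary-algebra case. Everything else -- the two derived equivalences and the resulting identifications of $K_{0}$-ranks -- is routine.
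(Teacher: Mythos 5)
The paper offers no proof of Proposition \ref{tilting2} to compare against: it is quoted verbatim from Happel--Reiten \cite{HR1999}, so your proposal can only be judged on its own terms. Your ``only if'' direction is correct and standard: a tilting object induces a triangle equivalence $D^{b}(\mathcal{H})\simeq D^{b}({\rm mod}\,\End(T))$, which identifies the ranks of the Grothendieck groups, and the rank of $K_{0}({\rm mod}\,\End(T))$ is the number of indecomposable projective $\End(T)$-modules, i.e.\ the number of non-isomorphic indecomposable summands of $T$.

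The ``if'' direction has a genuine gap, precisely at the step you yourself flag as ``the main obstacle''. The assertion that every partial tilting (presilting) complex over the hereditary algebra $\Lambda$ is a direct summand of a tilting complex is not covered by Bongartz's lemma, which is a statement about modules: the image $\bar{T}$ of $T$ in $K^{b}({\rm proj}\,\Lambda)$ is in general a genuine complex (a direct sum of shifted exceptional modules), and the analogous completion of presilting complexes to silting complexes fails for general finite-dimensional algebras, so it cannot be invoked as a black box. Establishing it for hereditary $\Lambda$ requires a separate argument, and even then the completion is a priori only a silting complex, so the summand count must appeal to the Aihara--Iyama bound for silting complexes rather than to your ``only if'' count for tilting objects. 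Since this completion is exactly the content of the implication you are trying to prove, the argument as written is circular-adjacent rather than complete. The standard way to close the gap --- and essentially the route taken in \cite{HR1999} and by H\"ubner --- is to stay inside $\mathcal{H}$ and induct via perpendicular categories: for an exceptional direct summand $E$ of $T$, the category $E^{\perp}$ is again a hereditary abelian category with a tilting object whose Grothendieck group has rank one less than that of $\mathcal{H}$ (Geigle--Lenzing \cite{GL1991}, already cited in the paper); this simultaneously yields the completion of a rigid object to a tilting object and the bound on the number of indecomposable summands, without ever leaving the abelian category.
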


Geigle-Lenzing \cite{GL87} constructed a canonical tilting sheaf for any weighted projective line. As a consequence, the rank of the Grothendieck group $K_{0}({\rm coh}\mbox{-}\mathbb{X}(p,q))$ equals to $p+q$, and any tilting sheaf in ${\rm coh}\mbox{-}\mathbb{X}(p,q)$ contains $p+q$ indecomposable direct summands. A rigid sheaf in ${\rm coh}\mbox{-}\mathbb{X}(p,q)$ with $p+q-1$-many (pairwise non-isomorphic) indecomposable direct summands is called an \emph{almost complete tilting sheaf}. Let $\overline{T}$ be an almost complete tilting sheaf in ${\rm coh}\mbox{-}\mathbb{X}(p,q)$.
If there exists an indecomposable sheaf $E$, such that $\overline{T}\oplus E$ is a tilting sheaf, then $E$ is called a \emph{complement} of $\overline{T}$.

Recall from \cite{ABGP2010} that a \emph{triangulation} $\Gamma$ of $A_{p,q}$ is a maximal collection of arcs that do not intersect in the interior of $A_{p,q}$.
According to Proposition \cite[Proposition 2.1]{ABGP2010}, any triangulation $\Gamma$ of $A_{p,q}$ consists of $p+q$-many arcs.
In our model, we always fix the orientations for the arcs in $\Gamma$, namely, we take the arcs from the set $\mathcal{C}$ appeared in Theorem \ref{corresponding}. For example, all the bridging arcs are oriented from the outer boundary to the inner boundary.

\begin{thm}\label{triangulation and tilting}
The tilting sheaves in the category ${\rm coh}\mbox{-}\mathbb{X}(p,q)$ are in natural bijection with the triangulations of $A_{p,q}$.
\end{thm}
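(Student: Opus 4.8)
The plan is to construct the bijection explicitly using the parametrization machinery already developed, and then verify it is well-defined and surjective by a counting/combinatorial argument. The key observation is that Theorem~\ref{corresponding} gives a bijection $\phi:\mathcal{C}\to{\rm ind}({\rm coh}\mbox{-}\mathbb{X}(p,q))$ between oriented curves and indecomposable sheaves, and Theorem~\ref{dimension and positive intersection} identifies $\dim_{\mathbf k}{\rm Ext}^1(X,Y)$ with the positive intersection number $I^+(\phi^{-1}(X),\phi^{-1}(Y))$. So I would first note the symmetric fact that a collection of sheaves $\{X_1,\dots,X_m\}$ (not supported at ordinary points) is rigid, i.e. ${\rm Ext}^1(X_i\oplus X_j, X_i\oplus X_j)=0$ for all $i,j$, if and only if the corresponding curves are pairwise non-crossing in the interior of $A_{p,q}$; here one must be slightly careful because $I^+$ detects only oriented-from-the-right crossings, so I would check that for the specific oriented arcs in $\mathcal{C}$ (bridging arcs all oriented $\partial'\to\partial$, peripheral arcs with the fixed orientation) any geometric interior crossing forces a positive crossing in one of the two orders $I^+(\alpha,\beta)$ or $I^+(\beta,\alpha)$, which matches ${\rm Ext}^1$ being nonzero in one direction by Serre duality. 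This reduces "rigid sheaf" to "non-crossing collection of arcs."

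Next I would pin down the cardinality. By Proposition~\ref{tilting2} together with Proposition~\ref{the properties of coherent sheaves}(6), a tilting sheaf has exactly $p+q$ indecomposable summands; by \cite[Proposition 2.1]{ABGP2010} a triangulation of $A_{p,q}$ has exactly $p+q$ arcs. So the map $\phi$ restricted to arcs will carry a triangulation (a maximal non-crossing set of $p+q$ arcs) to a set of $p+q$ sheaves which is rigid by the previous paragraph; I must check this set has no repetitions (immediate, since $\phi$ is a bijection) and that it is actually a tilting sheaf — but rigidity plus the correct number of summands gives exactly that by Proposition~\ref{tilting2}. One subtlety: a triangulation of the annulus $A_{p,q}$ can contain arcs that, under $\pi$, are curves rather than embedded arcs — but a genuine triangulation of the annulus in the sense of \cite{ABGP2010} consists of honest non-self-crossing arcs in $A_{p,q}$, and every such arc is the $\pi$-image of an arc in $\mathbb{U}$ lying in $\mathcal{C}$ (positive bridging or peripheral), so this is fine; I would also note that an annulus triangulation always contains at least two bridging arcs, matching the fact that a tilting sheaf cannot consist entirely of torsion sheaves (a torsion-only collection cannot generate, as ${\rm Hom}({\rm coh}_0,{\rm vec})=0$). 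Conversely, given a tilting sheaf $T=\bigoplus_{i=1}^{p+q}T_i$, none of the $T_i$ is supported at an ordinary point (an ordinary homogeneous tube summand $S_\lambda^{(j)}$ would force ${\rm Hom}(T,S_\mu)={\rm Ext}^1(T,S_\mu)=0$ for some other ordinary $\mu$, contradicting the generating condition — this is the standard fact that tilting sheaves over weighted projective lines have no homogeneous torsion summands), hence $\phi^{-1}(T)$ is a set of $p+q$ pairwise non-crossing arcs, which is therefore a triangulation by maximality (any $p+q$ non-crossing arcs form a maximal collection).

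Then I would conclude: the assignment $T\mapsto\phi^{-1}(T)$ and $\Gamma\mapsto\phi(\Gamma)$ are mutually inverse bijections between tilting sheaves and triangulations, which is the content of the theorem (and this is the map $\phi$ of the introduction, abusing notation). I would organize the write-up as: (i) a lemma translating rigidity into the non-crossing condition via Theorems~\ref{corresponding} and \ref{dimension and positive intersection}; (ii) the cardinality matching via Proposition~\ref{tilting2} and \cite[Proposition 2.1]{ABGP2010}; (iii) exclusion of homogeneous torsion summands and the "at least two bridging arcs" compatibility; (iv) assembling the bijection. The main obstacle I anticipate is step (i): carefully matching the \emph{oriented} positive-intersection number $I^+$ (which is direction-sensitive and was only computed for pairs not supported at ordinary points) with the \emph{unoriented} geometric non-crossing condition for triangulations — in particular verifying that for the fixed orientations of arcs in $\mathcal{C}$, an interior crossing of two arcs $\alpha,\beta$ always produces $I^+(\alpha,\beta)+I^+(\beta,\alpha)\ge 1$, equivalently ${\rm Ext}^1(\phi(\pi(\alpha)),\phi(\pi(\beta)))\neq 0$ or ${\rm Ext}^1(\phi(\pi(\beta)),\phi(\pi(\alpha)))\neq 0$, which by Serre duality ${\rm Ext}^1(X,Y)\cong D{\rm Hom}(Y,\tau X)$ is a statement one can check case-by-case on the three arc types (bridging/bridging, bridging/peripheral, peripheral/peripheral), the last case being covered by \cite{BM2012}. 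A secondary technical point is handling the degenerate small cases where $p$ or $q$ equals $1$ (then one boundary has a single marked point and certain peripheral arcs degenerate), but these can be absorbed by the conventions already set up after Theorem~\ref{corresponding}.
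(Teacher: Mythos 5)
Your proposal is correct and follows essentially the same route as the paper: use Theorem \ref{dimension and positive intersection} to translate rigidity into the non-crossing condition, match the count $p+q$ via Proposition \ref{tilting2}, Proposition \ref{the properties of coherent sheaves}(6) and \cite[Proposition 2.1]{ABGP2010}, and assemble the two mutually inverse maps. In fact you are somewhat more careful than the paper's own (very brief) proof, which silently applies Theorem \ref{dimension and positive intersection} to all summands of a tilting sheaf without explicitly excluding summands supported at ordinary points and without commenting on the orientation-sensitivity of $I^{+}$ versus the unoriented crossing condition — both points you rightly flag and resolve.
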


\begin{proof}
Let $\Gamma$ be a triangulation of $A_{p,q}$ and $\gamma_1, \gamma_2, \cdots, \gamma_{p+q}$ be the set of arcs in $\Gamma$. Let $$T=\bigoplus\limits_{i=1}^{p+q}\phi(\gamma_i).$$
According to Theorem \ref{dimension and positive intersection} and the definition of triangulation, we have $${\rm dim_{\mathbf{k}}Ext^{1}}(\phi(\gamma_i),\, \phi(\gamma_j))=I^{+}(\gamma_i,\, \gamma_j)=0$$ for $1\leq i,\, j\leq p+q$. It follows that ${\rm dim_{\mathbf{k}}Ext^{1}}(T,T)=0.$ Combining with Proposition \ref{tilting2} and Propositions \ref{the properties of coherent sheaves} $(6)$, we get that $T=\bigoplus\limits_{i=1}^{p+q}\phi(\gamma_i)$ is a tilting sheaf in ${\rm coh}\mbox{-}\mathbb{X}(p,q)$.

On the other hand, if $T=\bigoplus\limits_{i=1}^{p+q}T_{i}$ is a tilting sheaf in ${\rm coh}\mbox{-}\mathbb{X}(p,q)$, then we have $\Ext^{1}(T,T)=0.$ By Theorem \ref{dimension and positive intersection}, $\{\phi^{-1}(T_1), \phi^{-1}(T_2), \cdots, \phi^{-1}(T_{p+q})\}$ is a collection of arcs that do not intersect in the interior of $A_{p,q}$, which provides a triangulation of $A_{p,q}$ by Proposition \cite[Proposition 2.1]{ABGP2010}. Therefore, we complete the proof of Theorem.
\end{proof}

\begin{rem}
The following general result has been stated in \cite{Hubner1996} for weighted projective lines of arbitrary type. We give a short proof for weight type $(p,q)$ by using the correspondence in Theorem \ref{triangulation and tilting}.
\end{rem}

\begin{cor}\label{two complements}
Let $\overline{T}$ be an almost complete tilting sheaf in the category ${\rm coh}\mbox{-}\mathbb{X}(p,q)$. Then there exist precisely two complements of $\overline{T}$.
\end{cor}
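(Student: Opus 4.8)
The plan is to translate the statement entirely into the geometric model of Theorem~\ref{triangulation and tilting}. Under the bijection $\phi$, an almost complete tilting sheaf $\overline{T}=\bigoplus_{i=1}^{p+q-1}T_i$ corresponds to a collection $\Sigma=\{\phi^{-1}(T_1),\dots,\phi^{-1}(T_{p+q-1})\}$ of $p+q-1$ pairwise non-crossing arcs in $A_{p,q}$ (non-crossing because $\operatorname{Ext}^1(\overline{T},\overline{T})=0$ forces all positive intersection numbers to vanish by Theorem~\ref{dimension and positive intersection}, hence, after reversing orientations where needed, geometric non-crossing). So the corollary is equivalent to: every such $\Sigma$ can be completed to a triangulation of $A_{p,q}$ in exactly two ways. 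The complements of $\overline{T}$ are then precisely the $\phi$-images of the arcs that can be added to $\Sigma$.

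First I would invoke \cite[Proposition~2.1]{ABGP2010}: any triangulation of $A_{p,q}$ has exactly $p+q$ arcs, and any collection of pairwise non-crossing arcs extends to a triangulation. Applying this to $\Sigma$ (which has $p+q-1$ arcs) already yields at least one arc $\delta$ with $\Sigma\cup\{\delta\}$ a triangulation, so at least one complement exists; this reproves existence without appealing to \cite{Hubner1996}. The substantive point is the count. The region $A_{p,q}\setminus\Sigma$ is a disjoint union of polygons; since $\Sigma$ is one arc short of a triangulation and each triangulation has $p+q$ arcs, exactly one of these complementary regions fails to be a triangle — it is a quadrilateral (possibly self-folded, or an annulus-with-one-marked-boundary-component type region) that admits precisely two ways to be cut by a diagonal into triangles, while every other region is already a triangle and admits none. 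This is the standard ``flip'' phenomenon for surface triangulations, and I would cite it from \cite{FST2008} or argue it directly by an Euler-characteristic computation for the marked annulus, keeping careful track of the two boundary components and the possibility of peripheral regions. Hence there are exactly two arcs $\delta_1,\delta_2$ completing $\Sigma$, and $\delta_1\neq\delta_2$.

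Finally I would close the loop: set $E_k=\phi(\delta_k)$ for $k=1,2$. By Theorem~\ref{triangulation and tilting}, $\overline{T}\oplus E_k$ is a tilting sheaf, so $E_1,E_2$ are complements of $\overline{T}$; conversely if $E$ is any complement, then $\overline{T}\oplus E$ is tilting, so $\{\phi^{-1}(T_1),\dots,\phi^{-1}(T_{p+q-1}),\phi^{-1}(E)\}$ is a triangulation extending $\Sigma$, forcing $\phi^{-1}(E)\in\{\delta_1,\delta_2\}$ and hence $E\in\{E_1,E_2\}$. Together with $E_1\neq E_2$ this gives precisely two complements. The main obstacle I anticipate is the geometric combinatorics of the marked annulus: unlike the disc, $A_{p,q}$ has non-simply-connected complementary regions and self-folded triangles can occur, so the claim ``exactly one complementary region is a quadrilateral with exactly two diagonals'' needs the careful Euler-characteristic bookkeeping (or an appeal to the precise flip statement in \cite{FST2008}) rather than a one-line appeal to the polygon case; I would also need to confirm that the two diagonals $\delta_1,\delta_2$ indeed lie in the oriented arc set $\mathcal{C}$ of Theorem~\ref{corresponding}, i.e. that the orientation conventions are consistent so that $\phi(\delta_k)$ makes sense.
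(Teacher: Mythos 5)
Your proposal is correct and follows essentially the same route as the paper: both arguments pass through the bijection of Theorem \ref{triangulation and tilting} to represent $\overline{T}$ by $p+q-1$ pairwise non-crossing arcs, observe that the unique non-triangular complementary region is a quadrilateral, and conclude from its exactly two diagonals that there are precisely two complements. Your additional attention to the Euler-characteristic count, the annulus topology, and the orientation conventions on $\mathcal{C}$ fills in details the paper leaves implicit, but the underlying argument is the same.
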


\begin{proof}
By the proof of Theorem \ref{triangulation and tilting}, we see that $\overline{T}$ can be represented by $p+q-1$ non-intersecting arcs in $\mathcal{C}.$ To get a triangulation of $A_{p,q}$, we should add just one further arc, which must be a diagonal in a quadrilateral. Since there are exactly two diagonals in a quadrilateral, we get two arcs belong to $\mathcal{C}$ by attaching suitable orientations and represent the two complements of $\overline{T}$.
\end{proof}

\subsection{Flips and mutations}

In this subsection, we study the compatibility of the flip of an arc in a triangulation and the mutation of indecomposable direct summand of the corresponding tilting sheaf.

\begin{defn}(\cite{GengSF2020})
Let $T=\overline{T}\oplus X$ and $T^{\prime}=\overline{T}\oplus X^{\prime}$ be two tilting sheaves in ${\rm coh}\mbox{-}\mathbb{X}(p,q)$, where $X$ and $X^{\prime}$ are two non-isomorphic indecomposable objects in ${\rm coh}\mbox{-}\mathbb{X}(p,q)$. Then $T^{\prime}$ is called the \emph{mutation} of $T$ at $X$ and denoted by $T^{\prime}=\mu_{X}(T)$.
\end{defn}

In this paper, we call $X^{\prime}$ the \emph{mutation} of $X$ w.r.t $T$ and denote by $\mu_{T}(X)=X^{\prime}.$

Let $\gamma$ be an arc of a triangulation $\Gamma$ of $A_{p,q}$. Then $\gamma$ is one diagonal of the quadrilateral formed by the two triangles of $\Gamma$ that contain $\gamma$. Recall from \cite{ABGP2010} that the \emph{flip} of $\gamma$ replaces the arc $\gamma$ by the other diagonal $\gamma^{\prime}$ of the same quadrilateral (c.f. Figure \ref{figure of flip}). Keeping all other arcs unchanged, one obtains a new triangulation $\mu_{\gamma}(\Gamma)$. For convenient, we call $\gamma^{\prime}$ the \emph{flip} of $\gamma$ w.r.t $\Gamma$, and denote by $\gamma^{\prime}=\mu_{\Gamma}(\gamma).$
\begin{figure}[H]
\begin{tikzpicture}
\draw[](0,0)--(1,1);
\draw[](0,0)--(1,-1);
\draw[](1,1)--(2,0);
\draw[](1,-1)--(2,0);
\node()at(0,0){\tiny$\bullet$};
\node()at(1,1){\tiny$\bullet$};
\node()at(1,-1){\tiny$\bullet$};
\node()at(2,0){\tiny$\bullet$};
\draw[red](1,-0.95)--(1,0.95);
\node()at(1.2,0){${\red \gamma}$};
\draw[<->](2.8,0)--(3.4,0);
\draw[](4,0)--(5,1);
\draw[](4,0)--(5,-1);
\draw[](5,1)--(6,0);
\draw[](5,-1)--(6,0);
\node()at(4,0){\tiny$\bullet$};
\node()at(5,1){\tiny$\bullet$};
\node()at(5,-1){\tiny$\bullet$};
\node()at(6,0){\tiny$\bullet$};
\draw[red](4.05,0)--(5.95,0);
\node()at(5,0.2){${\red \gamma^{\prime}}$};
\end{tikzpicture}
\caption{Flip of an arc $\gamma$}\label{figure of flip}
\end{figure}
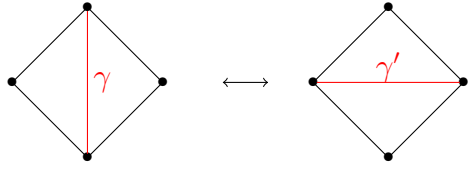

For a triangulation $\Gamma$ of $A_{p,q}$, by choosing suitable orientation for bridging arcs, we can always assume $\mu_{\gamma}(\Gamma)$ is also a triangulation of $A_{p,q}$.
By Theorem \ref{triangulation and tilting}, we know that a triangulation $\Gamma$ corresponds to a tilting sheaf $T$ in ${\rm coh}\mbox{-}\mathbb{X}(p,q)$, and $\phi(\gamma)$ is an indecomposable direct summand of $T$. The following proposition shows that the flip of $\gamma$ and the mutation of $\phi(\gamma)$ are compatible.

\begin{prop}\label{mutation and flip}
Let $\Gamma$ be a triangulation of $A_{p,q}$ and $\gamma$ be an arc in $\Gamma$. Then $$\phi(\mu_{\Gamma}(\gamma))=\mu_{T}(\phi(\gamma)).$$
\end{prop}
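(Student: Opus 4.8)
The plan is to leverage the geometric setup already established. Recall that the flip $\mu_\Gamma(\gamma) = \gamma'$ replaces $\gamma$ by the other diagonal of the quadrilateral $Q$ formed by the two triangles of $\Gamma$ adjacent to $\gamma$; its four sides are either arcs of $\Gamma$ or boundary segments. By Theorem \ref{triangulation and tilting}, $T = \phi(\Gamma) = \overline{T} \oplus \phi(\gamma)$ is a tilting sheaf where $\overline{T} = \bigoplus_{\delta \in \Gamma, \delta \neq \gamma} \phi(\delta)$ is an almost complete tilting sheaf. Replacing $\gamma$ by $\gamma'$ yields another triangulation $\mu_\gamma(\Gamma)$, hence $\phi(\mu_\gamma(\Gamma)) = \overline{T} \oplus \phi(\gamma')$ is again a tilting sheaf by Theorem \ref{triangulation and tilting}. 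Since $\phi$ is injective and $\gamma' \neq \gamma$, we have $\phi(\gamma') \not\cong \phi(\gamma)$. By Corollary \ref{two complements}, $\overline{T}$ has exactly two complements, one of which is $\phi(\gamma)$; since $\phi(\gamma')$ is a complement of $\overline{T}$ distinct from $\phi(\gamma)$, it must be the other one. By the definition of mutation, $\mu_T(\phi(\gamma))$ is precisely the unique indecomposable $X' \not\cong \phi(\gamma)$ with $\overline{T} \oplus X'$ tilting, i.e.\ the other complement of $\overline{T}$. Therefore $\mu_T(\phi(\gamma)) = \phi(\gamma') = \phi(\mu_\Gamma(\gamma))$.

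First I would spell out that a flip is a local move: only the arc $\gamma$ changes, all other arcs of $\Gamma$ persist in $\mu_\gamma(\Gamma)$, so the two triangulations share the sub-collection $\Gamma \setminus \{\gamma\}$, which corresponds to $\overline{T}$. Second, I would invoke Theorem \ref{triangulation and tilting} in both directions to see that $\phi(\gamma)$ and $\phi(\gamma')$ are both complements of $\overline{T}$ (the non-intersecting arc collections $\Gamma$ and $\mu_\gamma(\Gamma)$ map to tilting sheaves). Third, I would use that $\gamma$ and $\gamma'$ are the two distinct diagonals of the same quadrilateral — combined with $\phi$ injective — to conclude $\phi(\gamma) \not\cong \phi(\gamma')$. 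Fourth, Corollary \ref{two complements} pins down that there are exactly two complements, forcing $\phi(\gamma')$ to be the one different from $\phi(\gamma)$, which by definition is $\mu_T(\phi(\gamma))$.

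The main obstacle, and the only point requiring genuine care, is the compatibility of orientations and the well-definedness of $\phi$ on flipped arcs: the arcs in $\mathcal{C}$ carry fixed orientations (bridging arcs oriented from $\partial'$ to $\partial$), and one must check that the flipped arc $\gamma'$, with the orientation prescribed for membership in $\mathcal{C}$, indeed lies in $\mathcal{C}$ and that $\overline{T} \oplus \phi(\gamma')$ has the claimed form — this is exactly the remark preceding the proposition ("by choosing suitable orientation for bridging arcs, we can always assume $\mu_\gamma(\Gamma)$ is also a triangulation"), so the content is already granted. A secondary routine check is that $\phi(\gamma)$ really is an indecomposable \emph{direct summand} of $T$ (immediate from $T = \bigoplus_{\delta \in \Gamma} \phi(\delta)$ and each $\phi(\delta)$ indecomposable by Theorem \ref{corresponding}), so that the mutation $\mu_T(\phi(\gamma))$ is defined in the first place. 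Everything else is a formal consequence of the bijection in Theorem \ref{triangulation and tilting} and the uniqueness in Corollary \ref{two complements}.
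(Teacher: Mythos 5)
Your proposal is correct and follows essentially the same route as the paper: both triangulations $\Gamma$ and $\mu_\gamma(\Gamma)$ share the $p+q-1$ arcs other than $\gamma$, Theorem \ref{triangulation and tilting} turns both into tilting sheaves differing in exactly one indecomposable summand, and the conclusion follows from the definition of mutation. Your extra appeal to Corollary \ref{two complements} to pin down uniqueness of the second complement, and your remark about orientations of the flipped arc, are sensible refinements of the same argument rather than a different approach.
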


\begin{proof}
Let $\gamma,\,\gamma_1,\,\gamma_2,\,\cdots, \gamma_{p+q-1}$ be the set of arcs in the triangulation $\Gamma$. Therefore, $\mu_{\gamma}(\Gamma)$ is a triangulation of $A_{p,q}$ with arcs $\gamma^{\prime},\,\gamma_1,\,\gamma_2,\,\cdots, \gamma_{p+q-1}$. By Theorem \ref{triangulation and tilting}, we get two tilting sheaves $$T=\phi(\gamma)\oplus\phi(\gamma_1)\oplus\phi(\gamma_2)\oplus\cdots\oplus\phi(\gamma_{p+q-1})$$
and $$T^{\prime}=\phi(\gamma^{\prime})\oplus\phi(\gamma_1)\oplus\phi(\gamma_2)\oplus\cdots\oplus\phi(\gamma_{p+q-1})$$
in ${\rm coh}\mbox{-}\mathbb{X}(p,q)$. It follows that $\phi(\mu_{\Gamma}(\gamma))=\phi(\gamma^{\prime})=\mu_{T}(\phi(\gamma))$.
\end{proof}

\section{Tilting bundles and bundle-mutations}

In this section, we use the geometric model to investigate tilting bundles in the category ${\rm coh}\mbox{-}\mathbb{X}(p,q)$, basing on Theorem \ref{triangulation and tilting} and Proposition \ref{mutation and flip}.
Denote by
$$\mathcal{T}^{\nu}_{\mathbb{X}}:=\{{\rm Tilting \ bundles  \ in} \ {\rm coh}\mbox{-}\mathbb{X}(p,q)\}.$$

\subsection{A combinatorial description of $\mathcal{T}^{\nu}_{\mathbb{X}}$}

Let $T=\bigoplus\limits_{i=1}^{p+q}T_{i}$ be a tilting bundle in ${\rm coh}\mbox{-}\mathbb{X}(p,q)$. Under Theorem \ref{corresponding}, we can view indecomposable objects in the category ${\rm coh}\mbox{-}\mathbb{X}(p,q)$ as oriented curves in the marked annulus $A_{p,q}$. Hence each $T_{i}$ corresponds to a positive bridging arc $[D^{a_{i}}_{b_{i}}]$, where $a_{i}\in \frac{\mathbb{Z}}{p}$ and $b_{i}\in \frac{\mathbb{Z}}{q}$ for $1\leq i\leq p+q$. Moreover, by Theorem \ref{triangulation and tilting}, $T$ corresponds to a triangulation $\Gamma$ with arcs $[D^{a_{i}}_{b_{i}}], 1\leq i\leq p+q.$ Therefore, $\{D^{a_{i}}_{b_{i}}, 1\leq i\leq p+q\}$ is a triangulation of a parallelogram in $\mathbb{U}$.
Thanks to \eqref{periodicity of covering map}, without loss of generality we may assume $T$ has the following \emph{normal form}:
\begin{equation}\label{assumption on chains}
0=a_1< a_2\leq \cdots\leq a_{p+q}=1, {\quad} b_1\leq b_2\leq \cdots\leq b_{p+q}.
\end{equation}

\begin{conven}\label{order}
From now on, for a tilting bundle $T=\bigoplus\limits_{i=1}^{p+q}T_i$ in ${\rm coh}\mbox{-}\mathbb{X}(p,q)$, we always assume $T=\bigoplus\limits_{i=1}^{p+q}[D^{a_{i}}_{b_{i}}]$ satisfying \eqref{assumption on chains}, and denote by $\mu_{i}(T)$ for the mutation of $T$ at $T_{i}$, where $i$ is taken modulo $p+q$.
In particular, $\mu_{i-1}(T)=\mu_{p+q}(T)$ for $i=1$, and $\mu_{i+1}(T)=\mu_{1}(T)$ for $i=p+q.$
\end{conven}

We will give a combinatorial description for the set of tilting bundles $\mathcal{T}^{\nu}_{\mathbb{X}}$ in ${\rm coh}\mbox{-}\mathbb{X}(p,q)$. For this, we consider the following set
$$\Lambda_{(p,q)}^0:=\{(c_1, c_2, \cdots, c_p)\in\mathbb{Z}^{p}|c_1\leq c_2\leq \cdots\leq c_p\leq c_1+q\}.$$

\begin{thm}\label{bijection between tilting bundle and combination vertices}
There exists a bijection between $\mathcal{T}^{\nu}_{\mathbb{X}}$ and $\Lambda_{(p,q)}^0$.
\end{thm}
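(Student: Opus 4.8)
The plan is to build an explicit map from a tilting bundle in normal form \eqref{assumption on chains} to a point of $\Lambda_{(p,q)}^0$, and then to invert it. Given $T=\bigoplus_{i=1}^{p+q}[D^{a_i}_{b_i}]$ in normal form, the upper endpoints $a_1\le a_2\le\cdots\le a_{p+q}$ lie in $\frac{\mathbb{Z}}{p}$ and run from $0$ to $1$, so exactly $p$ of the $a_i$ are distinct and take the values $0,\frac1p,\dots,\frac{p-1}{p}$ (with $a_{p+q}=1\equiv 0$); group the arcs according to which marked point $\frac{k}{p}$ on the inner boundary they end at, for $k=0,1,\dots,p-1$. For each $k$, among the arcs ending at $\frac{k}{p}$ there is a well-defined largest lower endpoint; record $q\cdot b$ suitably normalized as an integer $c_{k+1}$. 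More precisely I would set $c_{k+1}$ to be (a shift of) $q$ times the lower endpoint of the \emph{last} arc of $\Gamma$ incident to $\frac{k}{p}$, so that the monotonicity $b_1\le\cdots\le b_{p+q}$ forces $c_1\le c_2\le\cdots\le c_p$, and the fact that the arcs at the marked point $0$ and the marked point $1\equiv 0$ differ by a full turn (the periodicity \eqref{periodicity of covering map}) forces $c_p\le c_1+q$. This produces the assignment $T\mapsto(c_1,\dots,c_p)\in\Lambda_{(p,q)}^0$.

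For injectivity and surjectivity, the key observation is that a tilting bundle in normal form is \emph{completely determined} by the data of which inner marked point each arc ends at together with the lower endpoints, and that once we know the ``staircase'' $(c_1,\dots,c_p)$ the whole triangulation is forced. Concretely, I would argue that the arcs of $\Gamma$ incident to consecutive inner marked points $\frac{k-1}{p}$ and $\frac{k}{p}$ must ``fan out'' between the lower endpoint recorded at step $k-1$ and the one recorded at step $k$, filling in all the intermediate marked points of $\partial^{\prime}$; this is exactly the condition that makes the collection a maximal non-crossing set, i.e.\ a triangulation, by \cite[Proposition 2.1]{ABGP2010}. Counting: the number of arcs reconstructed this way is $p$ (one terminal arc at each inner marked point) plus $\sum_{k}(c_{k}-c_{k-1})$-type gap contributions, and the constraint $c_p\le c_1+q$ together with $c_1\le\cdots\le c_p$ makes this total equal to $p+q$, matching Proposition \ref{the properties of coherent sheaves}(6). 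Thus each $(c_1,\dots,c_p)\in\Lambda_{(p,q)}^0$ yields a unique non-crossing maximal family of positive bridging arcs, hence by Theorem \ref{triangulation and tilting} a unique tilting bundle, and this reconstruction is inverse to the assignment above.

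The step I expect to be the main obstacle is showing that the reconstructed arc system is genuinely a triangulation (maximal non-crossing) rather than merely a non-crossing collection, and dually that \emph{every} tilting bundle's triangulation has its bridging arcs arranged in the monotone staircase pattern encoded by $\Lambda_{(p,q)}^0$ --- in other words, that no peripheral arcs can appear in a triangulation all of whose arcs are bridging, and that the normal form \eqref{assumption on chains} is achievable for every tilting bundle. The first point follows from Proposition \ref{the properties of coherent sheaves}(2) (a tilting bundle has no finite-length summand) combined with Theorem \ref{corresponding}, which says peripheral curves correspond precisely to torsion sheaves; the second is the content of the normalization discussion preceding Convention \ref{order} together with \eqref{periodicity of covering map}. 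Once these are in place, the bijection is the bookkeeping described above, and I would close by remarking that the monotonicity conditions defining $\Lambda_{(p,q)}^0$ are in bijective correspondence with the non-crossing condition on the arcs, which is the conceptual heart of the statement.
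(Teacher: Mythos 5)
Your proposal is correct and follows essentially the same route as the paper: the paper's map $\varphi$ records, for each upper boundary segment $[(\tfrac{i-1}{p})_{\partial},(\tfrac{i}{p})_{\partial}]$, the apex $\tfrac{c_i}{q}$ of the unique triangle containing it (equivalently, your ``largest lower endpoint of the arcs incident to $\tfrac{i-1}{p}$'' in normal form), and proves surjectivity by exactly the reconstruction you describe --- fanning a unique bridging arc from each intermediate marked point $c$ with $\tfrac{c_i}{q}<c<\tfrac{c_{i+1}}{q}$ into the forced triangulation, with the same $p+q$ arc count. The only difference is presentational (you phrase the map via the normal form \eqref{assumption on chains} of the arcs rather than via the triangles on the upper boundary), so no further comparison is needed.
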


\begin{proof}
For any tilting bundle $T$, by Theorem \ref{triangulation and tilting}, $T$ corresponds to a triangulation $\Gamma$ of (a parallelogram in) $\mathbb{U}$.
Then for any $1\leq i\leq p$, the segment $[(\frac{i-1}{p})_{\partial}, (\frac{i}{p})_{\partial}]$ belongs to a unique triangle in $\Gamma$:

\begin{figure}[H]
\begin{tikzpicture}
\draw[-](0,0)--(0.3,1.8);
\draw[-](0,0)--(6,0);
\draw[-](6,0)--(6.3,1.8);
\draw[-](0.3,1.8)--(6.3,1.8);
\node()at(0,0){\tiny$\bullet$};
\node()at(6,0){\tiny$\bullet$};
\node()at(0.3,1.8){\tiny$\bullet$};
\node()at(6.3,1.8){\tiny$\bullet$};
\node()at(1.1,1.8){\tiny$\bullet$};
\draw[-](0,0)--(1.1,1.8);
\node()at(2.7,1.8){\tiny$\bullet$};
\node()at(3.5,1.8){\tiny$\bullet$};
\node()at(4.3,1.8){\tiny$\bullet$};
\node()at(2.3,0){\tiny$\bullet$};
\node()at(3.7,0){\tiny$\bullet$};
\draw[-,red](2.7,1.8)--(2.3,0);
\draw[-,red](3.5,1.8)--(2.3,0);
\draw[-](3.5,1.8)--(3.7,0);
\draw[-](4.3,1.8)--(3.7,0);
\node()at(5.5,1.8){\tiny$\bullet$};
\node()at(4.8,0){\tiny$\bullet$};
\draw[-](5.5,1.8)--(4.8,0);
\draw[-](6.3,1.8)--(4.8,0);
\node()at(0,-0.3){\tiny$\frac{c_1}{q}$};

\node()at(2.3,-0.3){\tiny$\frac{c_i}{q}$};
\node()at(3.8,-0.3){\tiny$\frac{c_{i+1}}{q}$};
\node()at(4.8,-0.3){\tiny$\frac{c_p}{q}$};
\node()at(0.3,2.1){\tiny$0$};
\node()at(1.1,2.1){\tiny$\frac{1}{p}$};
\node()at(2.7,2.1){\tiny$\frac{i-1}{p}$};
\node()at(3.5,2.1){\tiny$\frac{i}{p}$};
\node()at(4.3,2.1){\tiny$\frac{i+1}{p}$};
\node()at(5.5,2.1){\tiny$\frac{p-1}{p}$};
\node()at(6.3,2.1){\tiny$1$};
\node()at(6,-0.3){\tiny$\frac{c_1}{q}+1$};
\draw[line width=1pt,dotted](0.8,-0.2)--(1.4,-0.2);
\draw[line width=1pt,dotted](4.2,-0.2)--(4.5,-0.2);
\draw[line width=1pt,dotted](5.1,-0.2)--(5.5,-0.2);
\draw[line width=1pt,dotted](1.65,2)--(2.15,2);
\draw[line width=1pt,dotted](4.7,2)--(5.1,2);
\draw[line width=1pt,dotted](2.7,-0.2)--(3.3,-0.2);
\node()at(-1,0){\tiny$\partial^{\prime}$};
\node()at(-1,1.8){\tiny$\partial$};
\end{tikzpicture}
\caption{$p$-triangles}\label{p-triangles}
\end{figure}
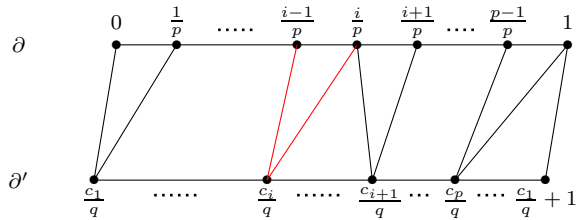
\noindent Therefore, we obtain a sequence $(c_1, c_2, \cdots, c_p)\in\mathbb{Z}^{p}$. Since $\Gamma$ is a triangulation, we obtain that $\frac{c_1}{q}\leq \frac{c_2}{q}\cdots\leq \frac{c_p}{q}\leq\frac{c_1}{q}+1,$ i.e., $c_1\leq c_2\cdots\leq c_p\leq c_1+q.$ That is, $(c_1, c_2, \cdots, c_p)\in \Lambda_{(p,q)}^0.$
This defines a map
\begin{equation}\label{the map phi}
\varphi: \mathcal{T}^{\nu}_{\mathbb{X}}\longrightarrow \Lambda_{(p,q)}^0; \quad T\mapsto (c_1, c_2, \cdots, c_p).\end{equation}

By Theorem \ref{triangulation and tilting}, we know that $\varphi$ is injective.
Now we show that it is surjective.

For any $(c_1, c_2, \cdots, c_p)\in \Lambda_{(p,q)}^0$, we can obtain $p$-triangles as in Figure \ref{p-triangles}.
For any marked point $c_{\partial^{\prime}}$ satisfying $\frac{c_i}{q}\textless c\textless \frac{c_{i+1}}{q}$, there exists a unique bridging arc connecting to $c$ which does not intersect with the above $p$-triangles, namely, the arc $D^{\frac{i}{p}}_{c}$ (see the following picture):
\begin{figure}[H]
\begin{tikzpicture}
\draw[-](0,0)--(0.3,1.8);
\draw[-](0,0)--(6,0);
\draw[-](6,0)--(6.3,1.8);
\draw[-](0.3,1.8)--(6.3,1.8);
\node()at(0,0){\tiny$\bullet$};
\node()at(6,0){\tiny$\bullet$};
\node()at(0.3,1.8){\tiny$\bullet$};
\node()at(6.3,1.8){\tiny$\bullet$};
\node()at(1.1,1.8){\tiny$\bullet$};
\node()at(0.7,0){\tiny$\bullet$};
\draw[-](0,0)--(1.1,1.8);
\node()at(2.7,1.8){\tiny$\bullet$};
\node()at(3.5,1.8){\tiny$\bullet$};
\node()at(4.3,1.8){\tiny$\bullet$};
\node()at(2.3,0){\tiny$\bullet$};
\node()at(3,0){\tiny$\bullet$};
\node()at(3.7,0){\tiny$\bullet$};
\draw[-](2.7,1.8)--(2.3,0);
\draw[-](3.5,1.8)--(2.3,0);
\draw[-](3.5,1.8)--(3.7,0);
\draw[-](4.3,1.8)--(3.7,0);
\draw[-](1.1,1.8)--(0.7,0);
\draw[-,dashed](3.5,1.8)--(3,0);
\node()at(5.5,1.8){\tiny$\bullet$};
\node()at(4.8,0){\tiny$\bullet$};
\draw[-](5.5,1.8)--(4.8,0);
\draw[-](6.3,1.8)--(4.8,0);
\node()at(0,-0.3){\tiny$\frac{c_1}{q}$};
\node()at(0.7,-0.3){\tiny$\frac{c_2}{q}$};
\node()at(2.3,-0.3){\tiny$\frac{c_i}{q}$};
\node()at(3,-0.25){\tiny$c$};
\node()at(3.8,-0.3){\tiny$\frac{c_{i+1}}{q}$};
\node()at(4.8,-0.3){\tiny$\frac{c_p}{q}$};
\node()at(0.3,2.1){\tiny$0$};
\node()at(1.1,2.1){\tiny$\frac{1}{p}$};
\node()at(2.7,2.1){\tiny$\frac{i-1}{p}$};
\node()at(3.5,2.1){\tiny$\frac{i}{p}$};
\node()at(4.3,2.1){\tiny$\frac{i+1}{p}$};
\node()at(5.5,2.1){\tiny$\frac{p-1}{p}$};
\node()at(6.3,2.1){\tiny$1$};
\node()at(6,-0.3){\tiny$\frac{c_1}{q}+1$};
\draw[line width=1pt,dotted](1.2,-0.2)--(1.8,-0.2);
\draw[line width=1pt,dotted](4.2,-0.2)--(4.5,-0.2);
\draw[line width=1pt,dotted](5.1,-0.2)--(5.5,-0.2);
\draw[line width=1pt,dotted](1.65,2)--(2.15,2);
\draw[line width=1pt,dotted](4.7,2)--(5.1,2);
\draw[line width=1pt,dotted](2.55,-0.2)--(2.85,-0.2);
\draw[line width=1pt,dotted](3.2,-0.2)--(3.5,-0.2);
\node()at(-1,1.8){\tiny$\partial$};
\node()at(-1,0){\tiny$\partial^{\prime}$};
\end{tikzpicture}
\end{figure}
\noindent Therefore, the $p$-triangles in Figure \ref{p-triangles} can be extended to a unique triangulation $\Gamma$ of a parallelogram in $\mathbb{U}$. Then by the bijection between tilting bundles and triangulations, we obtain a tilting bundle $T$, which satisfies $\varphi(T)=(c_1, c_2, \cdots, c_p)$ by the construction. Hence $\varphi$ is surjective. We are done.
\end{proof}

\subsection{Bundle-mutations}

Let $T$ and $T^{\prime}$ be two tilting sheaves in ${\rm coh}\mbox{-}\mathbb{X}(p,q)$ such that $T^{\prime}$ is a mutation of $T$. If $T, \,T^{\prime}$ are both tilting bundles, then such a mutation is called a {\em bundle-mutation}, cf. \cite{GengSF2020}.

\begin{prop}\label{position}
Let $T=\bigoplus\limits_{i=1}^{p+q}T_{i}$ be a tilting bundle in ${\rm coh}\mbox{-}\mathbb{X}(p,q)$.

\begin{itemize}
  \item[(1)] $\mu_{i}(T)$ is a tilting bundle if and only if the positions of the arcs in $\mathbb{U}$ corresponding to $T_{i-1}, T_{i}, T_{i+1}$  exactly satisfy one of the following two conditions:

\begin{figure}[H]
\begin{tikzpicture}
\draw[-](-2.5,0)--(-1.5,1.5);
\node()at(-2.5,0){\tiny$\bullet$};
\node()at(-1.5,1.5){\tiny$\bullet$};
\node()at(-1.5,0){\tiny$\bullet$};
\node()at(-0.5,1.5){\tiny$\bullet$};
\draw[-](-1.5,1.5)--(-1.5,0);
\draw[-](-1.5,0)--(-0.5,1.5);
\node()at(-1.4,0.8){\footnotesize${\red T_i}$};
\node()at(-2,0.8){\footnotesize${\red T_{i-1}}$};
\node()at(-0.7,0.8){\footnotesize${\red T_{i+1}}$};
\node()at(0.3,0.75){$,$};

\draw[-](1,1.5)--(2,0);
\draw[-](3,0)--(2,1.5);
\draw[-](2,0)--(2,1.5);
\node()at(2,0){\tiny$\bullet$};
\node()at(1,1.5){\tiny$\bullet$};
\node()at(3,0){\tiny$\bullet$};
\node()at(2,1.5){\tiny$\bullet$};
\node()at(2.1,0.8){\footnotesize${\red T_i}$};
\node()at(1.46,0.8){\footnotesize${\red T_{i-1}}$};
\node()at(2.75,0.8){\footnotesize${\red T_{i+1}}$};
\end{tikzpicture}
\end{figure}

\item[(2)] $\mu_{i}(T)$ is not a tilting bundle if and only if the positions of the arcs in $\mathbb{U}$ corresponding to $T_{i-1}, T_{i}, T_{i+1}$ exactly satisfy one of the following two conditions:

\begin{figure}[H]
\begin{tikzpicture}
\draw[-](-2.5,0)--(-1.5,1.5);
\draw[-](-1.5,1.5)--(-1.5,0);
\draw[-](-3.5,0)--(-1.5,1.5);
\node()at(-2.5,0){\tiny$\bullet$};
\node()at(-1.5,1.5){\tiny$\bullet$};
\node()at(-1.5,0){\tiny$\bullet$};
\node()at(-3.5,0){\tiny$\bullet$};
\node()at(-1.9,0.8){\footnotesize${\red T_i}$};
\node()at(-2.5,0.8){\footnotesize${\red T_{i-1}}$};
\node()at(-1.2,0.8){\footnotesize${\red T_{i+1}}$};
\node()at(-0.45,0.75){$,$};

\draw[-](1,1.5)--(2,0);
\draw[-](2,0)--(2,1.5);
\draw[-](2,0)--(0,1.5);
\node()at(2,0){\tiny$\bullet$};
\node()at(2,1.5){\tiny$\bullet$};
\node()at(1,1.5){\tiny$\bullet$};
\node()at(0,1.5){\tiny$\bullet$};
\node()at(1.56,0.8){\footnotesize${\red T_i}$};
\node()at(1,0.8){\footnotesize${\red T_{i-1}}$};
\node()at(2.3,0.8){\footnotesize${\red T_{i+1}}$};
\end{tikzpicture}
\end{figure}
\end{itemize}
\end{prop}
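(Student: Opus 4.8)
The plan is to transport the statement into the triangulation model and reduce it to the effect of a single flip. By Theorem~\ref{triangulation and tilting} the tilting bundle $T=\bigoplus_{i=1}^{p+q}T_i$ corresponds to a triangulation $\Gamma$ of $A_{p,q}$ whose arcs $\gamma_j=\phi^{-1}(T_j)$ are all positive bridging arcs, and Convention~\ref{order} together with \eqref{assumption on chains} records $\Gamma$ in normal form. By Proposition~\ref{mutation and flip}, $\mu_i(T)$ corresponds to the triangulation obtained from $\Gamma$ by flipping $\gamma_i$ and keeping the other $p+q-1$ arcs unchanged. Since those arcs stay bridging and the flip of an arc is again an arc (never a $\mathbf{k}^{*}$-parameterized cycle), Theorem~\ref{corresponding} shows that $\mu_i(T)$ is a tilting bundle precisely when the flipped arc $\mu_\Gamma(\gamma_i)$ is again a bridging arc, and is a tilting sheaf with a torsion (non-bundle) summand precisely when $\mu_\Gamma(\gamma_i)$ is a peripheral arc. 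So the proposition becomes a purely local question about the two triangles of $\Gamma$ incident to $\gamma_i$.

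Next I would identify the quadrilateral having $\gamma_i$ as a diagonal. Because every arc of $\Gamma$ is bridging, the normal form~\eqref{assumption on chains} is exactly the cyclic geometric order of these arcs, and any two cyclically consecutive arcs (indices taken mod $p+q$) bound a common triangle of $\Gamma$ --- this is already the content of the ``$p$-triangles'' argument in the proof of Theorem~\ref{bijection between tilting bundle and combination vertices}. Hence the two triangles incident to $\gamma_i$ are the one bounded by $\{\gamma_{i-1},\gamma_i\}$ and the one bounded by $\{\gamma_i,\gamma_{i+1}\}$. A triangle has three vertices whereas two bridging arcs carry four endpoints in total, so $\gamma_{i-1}$ and $\gamma_i$ necessarily share an endpoint, which lies either on the inner boundary $\partial$ or on the outer boundary $\partial^{\prime}$; the vertex of that triangle opposite to $\gamma_i$ is then the remaining endpoint of $\gamma_{i-1}$, sitting on the opposite boundary. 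The same applies to $\gamma_{i+1}$. Note that $\gamma_i$ has a single endpoint on $\partial$ and a single one on $\partial^{\prime}$, so ``$\gamma_{i-1}$ and $\gamma_i$ share their $\partial$-endpoint'' and ``$\gamma_{i+1}$ and $\gamma_i$ share their $\partial$-endpoint'' can hold simultaneously only if $\gamma_{i-1},\gamma_i,\gamma_{i+1}$ all meet at the same point of $\partial$ (and likewise for $\partial^{\prime}$).

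This leaves four cases for the pair (boundary on which $\gamma_{i-1}$ meets $\gamma_i$, boundary on which $\gamma_{i+1}$ meets $\gamma_i$). If one is $\partial$ and the other $\partial^{\prime}$, then the two vertices of the quadrilateral other than those of $\gamma_i$ lie one on $\partial^{\prime}$ and one on $\partial$, so the other diagonal $\mu_\Gamma(\gamma_i)$ is a bridging arc; these are exactly the two configurations in $(1)$ (the left one being ``$\gamma_{i-1},\gamma_i$ meet on $\partial$ and $\gamma_i,\gamma_{i+1}$ meet on $\partial^{\prime}$'', the right one with the two boundaries exchanged). If instead both are $\partial$ (resp.\ both $\partial^{\prime}$), then all of $\gamma_{i-1},\gamma_i,\gamma_{i+1}$ emanate from one point of $\partial$ (resp.\ $\partial^{\prime}$), the two other vertices of the quadrilateral lie on $\partial^{\prime}$ (resp.\ $\partial$), and $\mu_\Gamma(\gamma_i)$ is a peripheral arc there --- it is a legitimate peripheral arc because the three endpoints in question are pairwise distinct and hence force a gap of at least two marked points; these are exactly the two configurations in $(2)$. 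Combining with the first step yields both equivalences.

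The part I expect to require the most care is the second step: proving cleanly that in normal form two consecutive arcs bound a triangle of $\Gamma$ and locating the vertex opposite to $\gamma_i$ on the correct boundary, working directly from \eqref{assumption on chains} and Definition~\ref{peripheral arc and bridging arc}. One should also dispatch the degenerate small cases --- in particular $p=q=1$, where $\gamma_{i-1}=\gamma_{i+1}$ --- separately; there $\mu_i(T)$ is always a tilting bundle and no configuration of type $(2)$ arises, in agreement with the statement. Everything else is routine bookkeeping.
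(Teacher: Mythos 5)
Your argument is correct and follows essentially the same route as the paper: identify $\mu_T(T_i)$ with the second diagonal of the quadrilateral formed by the two triangles adjacent to $\gamma_i$ (Proposition \ref{mutation and flip} plus Corollary \ref{two complements}), observe via Theorems \ref{corresponding} and \ref{triangulation and tilting} that $\mu_i(T)$ is a tilting bundle exactly when that diagonal is a positive bridging arc rather than a peripheral arc, and check that the four listed configurations exhaust the possibilities. Your write-up is in fact more careful than the paper's (which dismisses one direction as "obvious"), since you justify from the normal form \eqref{assumption on chains} that the two relevant triangles are bounded by $\gamma_{i-1}$ and $\gamma_{i+1}$ and you treat the degenerate case $p=q=1$ explicitly.
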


\begin{proof}
The necessity is obvious. We only prove the sufficiency. By Proposition \ref{mutation and flip}, we have $T_i$ and $\mu_{T}(T_{i})$ are two diagonals of the same quadrilateral. If $\mu_i(T)$ is a tilting bundle, then $\mu_{T}(T_i)$ is a line bundle, it follows that $\mu_{T}(T_i)$ corresponds to a positive bridging arc of $A_{p,q}$ by Theorem \ref{corresponding}. Thus the statement $(1)$ holds. If $\mu_i(T)$ is not a tilting bundle, then $\mu_{T}(T_i)$ corresponds to a peripheral arc of $A_{p,q}$ by Theorem \ref{corresponding}. Therefore, the statement $(2)$ holds.
\end{proof}

\begin{prop}\label{effect of tilting bundle-mutation}
Assume $T=\bigoplus\limits_{i=1}^{p+q}T_{i}$ is a tilting bundle in ${\rm coh}\mbox{-}\mathbb{X}(p,q)$, and $\mu_{i}(T)$ is also a tilting bundle for some $1 \leq i\leq p+q$. Then

\begin{itemize}
  \item[(1)] $\mu_{i-1}(T)$ is a tilting bundle if and only if $\mu_{i-1}(\mu_{i}(T))$ is not a tilting bundle;

  \item[(2)] $\mu_{i+1}(T)$ is a tilting bundle if and only if $\mu_{i+1}(\mu_{i}(T))$ is not a tilting bundle.
  \end{itemize}
\end{prop}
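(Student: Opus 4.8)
The plan is to push everything through Proposition~\ref{mutation and flip} (mutation at a summand equals the flip of the corresponding arc) and Proposition~\ref{position} (which local configurations of a flip produce a tilting bundle), and then reduce both statements to a short parity argument.

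First I would set up combinatorial bookkeeping for a tilting bundle $T=\bigoplus_{i=1}^{p+q}T_i$ with $T_i=[D^{a_i}_{b_i}]$ in the normal form \eqref{assumption on chains}. By Theorem~\ref{triangulation and tilting} the arcs $D^{a_i}_{b_i}$ triangulate a fundamental parallelogram in $\mathbb{U}$, so any two cyclically consecutive arcs $T_{j-1},T_j$ span a triangle of this triangulation and hence share exactly one endpoint (indices read cyclically, as in Convention~\ref{order}). Call the transition $j-1\to j$ of \emph{type $A$} if $T_{j-1}$ and $T_j$ share their endpoint on $\partial^{\prime}$ (so $b_{j-1}=b_j$ and $a_j=a_{j-1}+\frac{1}{p}$), and of \emph{type $B$} if they share their endpoint on $\partial$ (so $a_{j-1}=a_j$ and $b_j=b_{j-1}+\frac{1}{q}$). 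Reading off the four pictures in Proposition~\ref{position} gives the reformulation I will use throughout: $\mu_j(T)$ is a tilting bundle if and only if the two transitions flanking position $j$, namely $j-1\to j$ and $j\to j+1$, have \emph{different} types --- the configurations in Proposition~\ref{position}(1) are exactly $(B,A)$ and $(A,B)$, those in~(2) exactly $(A,A)$ and $(B,B)$.

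Next, assuming $\mu_i(T)$ is a tilting bundle, I would compute the effect of $\mu_i$ locally; say the flanking transitions of $T$ at position $i$ are $(B,A)$, the case $(A,B)$ being symmetric. Then the quadrilateral around $T_i$ has its $\partial$-vertices at $a_{i-1}=a_i$ and $a_{i+1}=a_i+\frac{1}{p}$ and its $\partial^{\prime}$-vertices at $b_{i-1}=b_i-\frac{1}{q}$ and $b_i=b_{i+1}$, and by Proposition~\ref{mutation and flip} the flip replaces $T_i=[D^{a_i}_{b_i}]$ by $T_i^{\prime}=[D^{a_{i+1}}_{b_{i-1}}]$. One then checks directly that in $\mu_i(T)$ this new arc still occupies position $i$ in the cyclic order $T_1,\dots,T_{i-1},T_i^{\prime},T_{i+1},\dots$, that the transitions $i-2\to i-1$ and $i+1\to i+2$ are literally unchanged (the arcs $T_{i-2},T_{i-1},T_{i+1},T_{i+2}$ are untouched by $\mu_i$), and --- the one point that requires genuine care --- that the two transitions flanking position $i$ \emph{swap types}: in $\mu_i(T)$ the transition $i-1\to i$ becomes type $A$ (now $T_{i-1}$ and $T_i^{\prime}$ share the $\partial^{\prime}$-endpoint $b_{i-1}$) and $i\to i+1$ becomes type $B$ (now $T_i^{\prime}$ and $T_{i+1}$ share the $\partial$-endpoint $a_{i+1}$).

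With these facts the two assertions become immediate parity statements. For~(1): by the reformulation, $\mu_{i-1}(T)$ is a tilting bundle iff the transitions $i-2\to i-1$ and $i-1\to i$ of $T$ differ, i.e.\ (since $i-1\to i$ has type $B$ in $T$) iff $i-2\to i-1$ has type $A$; whereas $\mu_{i-1}(\mu_i(T))$ is a tilting bundle iff the transitions $i-2\to i-1$ and $i-1\to i$ of $\mu_i(T)$ differ, i.e.\ (since $i-1\to i$ now has type $A$ while $i-2\to i-1$ is unchanged) iff $i-2\to i-1$ has type $B$. As every transition has type $A$ or $B$, exactly one of the two conditions holds, which is~(1). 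Statement~(2) is the mirror image: replace $i-1$ by $i+1$ and argue with the transitions $i\to i+1$ and $i+1\to i+2$, the former having type $A$ in $T$ and type $B$ in $\mu_i(T)$. The boundary indices $i=1$ and $i=p+q$ are absorbed by the cyclic conventions of Convention~\ref{order}. The main obstacle is thus isolated in the local computation of the previous paragraph; everything else is the above bookkeeping.
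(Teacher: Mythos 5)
Your proposal is correct and follows essentially the same route as the paper's proof: both reduce everything to the local configurations of Proposition~\ref{position} around positions $i-1,i,i+1$ and to the observation that flipping $T_i$ exchanges which boundary endpoint the middle arc shares with each of its neighbours. Your type-$A$/type-$B$ bookkeeping, together with the lemma that the flip swaps the two flanking transition types and fixes all others, is a systematic restatement of the four pictures the paper checks case by case (including the contradiction step it uses for the converse direction).
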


\begin{proof}
We only prove the statement (1), the statement (2) is similar.

If $\mu_{i-1}(T)$ is a tilting bundle, by Proposition \ref{position}, the position of the arcs in $\mathbb{U}$ corresponding to $T_{i-2}, T_{i-1}, T_{i}, T_{i+1}$ exactly satisfies one of the following two conditions:

\begin{figure}[H]
\begin{tikzpicture}
\draw[-](-2.5,0)--(-1.5,1.5);
\node()at(-2.5,0){\tiny$\bullet$};
\node()at(-1.5,1.5){\tiny$\bullet$};
\node()at(-1.5,0){\tiny$\bullet$};
\node()at(-2.5,1.5){\tiny$\bullet$};
\node()at(-0.5,1.5){\tiny$\bullet$};
\draw[-](-1.5,1.5)--(-1.5,0);
\draw[-](-1.5,0)--(-0.5,1.5);
\draw[-](-2.5,0)--(-2.5,1.5);
\node()at(-1.4,0.8){\footnotesize ${\red  T_i}$};
\node()at(-1.9,0.8){\footnotesize${\red T_{i-1}}$};
\node()at(-0.65,0.8){\footnotesize${\red T_{i+1}}$};
\node()at(-2.7,0.8){\footnotesize${\red T_{i-2}}$};
\node()at(0.8,0.75){$,$};

\draw[-](2,1.5)--(3,0);
\draw[-](4,0)--(3,1.5);
\draw[-](3,0)--(3,1.5);
\draw[-](2,1.5)--(2,0);
\node()at(3,0){\tiny$\bullet$};
\node()at(2,1.5){\tiny$\bullet$};
\node()at(4,0){\tiny$\bullet$};
\node()at(3,1.5){\tiny$\bullet$};
\node()at(2,0){\tiny$\bullet$};
\node()at(3.1,0.8){\footnotesize${\red T_i}$};
\node()at(2.46,0.8){\footnotesize${\red T_{i-1}}$};
\node()at(3.7,0.8){\footnotesize${\red T_{i+1}}$};
\node()at(1.8,0.8){\footnotesize${\red T_{i-2}}$};
\end{tikzpicture}
\end{figure}
\noindent It is easy to see that $\mu_{i-1}(\mu_{i}(T))$ is not a tilting bundle in both situation.

If $\mu_{i-1}(\mu_{i}(T))$ is not a tilting bundle, we claim that $\mu_{i-1}(T)$ is a tilting bundle. For contradiction, we assume $\mu_{i-1}(T)$ is not a tilting bundle, by Proposition \ref{position}, the position of the arcs in $\mathbb{U}$ corresponding to $T_{i-2}, T_{i-1}, T_{i}, T_{i+1}$ exactly satisfies one of the following two conditions:

\begin{figure}[H]
\begin{tikzpicture}
\draw[-](-2.5,0)--(-1.5,1.5);
\draw[-](-1.5,1.5)--(-1.5,0);
\draw[-](-1.5,0)--(-0.5,1.5);
\draw[-](-3.5,0)--(-1.5,1.5);
\node()at(-2.5,0){\tiny$\bullet$};
\node()at(-1.5,1.5){\tiny$\bullet$};
\node()at(-1.5,0){\tiny$\bullet$};
\node()at(-0.5,1.5){\tiny$\bullet$};
\node()at(-3.5,0){\tiny$\bullet$};
\node()at(-1.4,0.8){\footnotesize${\red T_i}$};
\node()at(-1.9,0.8){\footnotesize${\red T_{i-1}}$};
\node()at(-0.7,0.8){\footnotesize${\red T_{i+1}}$};
\node()at(-2.8,0.8){\footnotesize${\red T_{i-2}}$};
\node()at(0.5,0.75){$,$};

\draw[-](2,1.5)--(3,0);
\draw[-](3,0)--(3,1.5);
\draw[-](4,0)--(3,1.5);
\draw[-](3,0)--(1,1.5);
\node()at(3,0){\tiny$\bullet$};
\node()at(3,1.5){\tiny$\bullet$};
\node()at(2,1.5){\tiny$\bullet$};
\node()at(4,0){\tiny$\bullet$};
\node()at(1,1.5){\tiny$\bullet$};
\node()at(3.1,0.8){\footnotesize${\red T_i}$};
\node()at(2.5,0.8){\footnotesize${\red T_{i-1}}$};
\node()at(3.7,0.8){\footnotesize${\red T_{i+1}}$};
\node()at(1.8,0.8){\footnotesize${\red T_{i-2}}$};
\end{tikzpicture}
\end{figure}
\noindent It follows that $\mu_{i-1}(\mu_{i}(T))$ is a tilting bundle in both cases, a contradiction. We are done.
\end{proof}

\subsection{Bundle-mutation index $n(T)$}

For any tilting bundle $T=\bigoplus\limits_{i=1}^{p+q}T_{i}$ in ${\rm coh}\mbox{-}\mathbb{X}(p,q)$, let $$I(T)=\{i|\mu_{i}(T) \;{\rm is\;a\; tilting
\; bundle}\},$$
and define the \emph{bundle-mutation index} of $T$ as $n(T)=|I(T)|.$

Under the following bijection from (\ref{the map phi}),
$$\varphi: \mathcal{T}^{\nu}_{\mathbb{X}}\longrightarrow \Lambda_{(p,q)}^0; \quad T\mapsto (c_1, c_2, \cdots, c_p),$$
we define $$J(T)=\{i|c_i=c_{i+1}\;\, {\rm for\;} 1\leq i\leq p-1\},$$ and denote by
$r(T)=|J(T)|.$

\begin{prop}\label{the bound of index}
For any tilting bundle $T$ in ${\rm coh}\mbox{-}\mathbb{X}(p,q)$, we have $n(T)=2(p-r(T))$. In particular, $2 \leq n(T)\leq 2p$.
\end{prop}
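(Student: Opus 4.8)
The plan is to translate the combinatorial quantities $n(T)$ and $r(T)$ through the bijection $\varphi$ of \eqref{the map phi} and to count directly, using the local picture of Proposition \ref{position}. First I would set up the dictionary: a tilting bundle $T$ corresponds to a triangulation $\Gamma$ of the parallelogram in $\mathbb{U}$, which in turn is recorded by the sequence $(c_1,\dots,c_p)\in\Lambda_{(p,q)}^0$, where $\frac{c_i}{q}$ is the vertex on $\partial'$ of the (unique) triangle containing the boundary segment $[(\frac{i-1}{p})_\partial,(\frac{i}{p})_\partial]$, as in Figure \ref{p-triangles}. By Proposition \ref{position}, an index $i\in I(T)$ precisely when the three arcs corresponding to $T_{i-1},T_i,T_{i+1}$ form one of the two "good" local configurations — i.e. $T_i$ lies as a diagonal of an honest quadrilateral whose other diagonal is again a bridging arc — and $i\notin I(T)$ precisely in the two "bad" configurations, where the opposite diagonal is a peripheral arc. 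The key geometric observation is that the bad configurations are exactly those in which $T_i$ is an arc ending at an inner marked point $(\frac{k}{p})_\partial$ with the property that two consecutive $p$-triangles share that endpoint, which on the combinatorial side is exactly the condition $c_k=c_{k+1}$ for the corresponding $k$.

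Concretely, I would argue as follows. Among the $p+q$ arcs of $\Gamma$, the $p$ "inner" arcs (those occurring as a side of one of the $p$-triangles, i.e. the arcs $D^{(k-1)/p}_{c_k/q}$ and $D^{k/p}_{c_k/q}$ type) and the $q$ "outer" arcs (one attached to each marked point on $\partial'$ strictly between consecutive $c_i$'s, as in the surjectivity argument of Theorem \ref{bijection between tilting bundle and combination vertices}) play different roles. For an outer arc $T_i=D^{k/p}_{c/q}$ with $\frac{c_k}{q}<\frac{c}{q}<\frac{c_{k+1}}{q}$, the quadrilateral across $T_i$ is always genuine and flipping yields another bridging arc, so every such $i$ lies in $I(T)$; there are $q-r(T)$ such arcs (each "gap" $c_k<c_{k+1}$ contributes $c_{k+1}-c_k-1$ outer arcs, but one must be careful — I will instead count via Euler/arc-count bookkeeping below). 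For an inner arc, flipping $T_i$ produces a peripheral arc exactly when the two triangles adjacent to it share a vertex on $\partial'$, i.e. when the corresponding consecutive $c$-values coincide. Rather than chase the outer-arc count by hand, the cleaner route is: the total number of flips that stay within tilting bundles equals $2(p+q)$ minus (number of flips producing a peripheral arc); each "degeneracy" $c_k=c_{k+1}$ produces exactly... — here I would match the local pictures in Proposition \ref{position}(2) with equalities $c_k = c_{k+1}$ and observe each such equality accounts for exactly two forbidden flips, while the wrap-around relation $c_p\le c_1+q$ contributes a contribution that (since $c_p$ and $c_1$ are never forced equal when $p<q$, and the extra $\mathbb{Z}_2$ is invisible here) does not add to $r(T)$. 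Combining: $n(T)=2(p+q)-2q-2r(T)=2(p-r(T))$; wait — this needs the outer arcs to never be forbidden, which I verify directly from the good pictures.

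The honest skeleton of the count I would actually write is: (i) by Proposition \ref{position}, $i\notin I(T)$ iff the local configuration at $T_i$ is one of the two "bad" ones, and in both bad pictures $T_i$ is an inner arc incident to a marked point on $\partial$ at which two $p$-triangles meet sharing their $\partial'$-vertex; (ii) such a coincidence of $\partial'$-vertices between the triangle on $[(\frac{k-1}{p})_\partial,(\frac{k}{p})_\partial]$ and the one on $[(\frac{k}{p})_\partial,(\frac{k+1}{p})_\partial]$ happens iff $c_k=c_{k+1}$ (with indices mod $p$, and the wrap-around handled by the normalization \eqref{assumption on chains} and periodicity \eqref{periodicity of covering map}); (iii) each such coincidence forbids exactly the two flips at the two inner arcs meeting at that shared $\partial$-point — so it removes $2$ from the would-be total of $2p$ "inner flips"; (iv) every outer arc always admits two good flips but these are already counted among the $2p$ via the quadrilateral structure — more precisely the mutations $\mu_i(T)$ for outer $i$ are always bundles, and the clean statement is that $I(T)$ consists of all $p-r(T)$ "non-degenerate" inner positions counted with multiplicity two (the flip at $T_i$ and the corresponding flip at the partnered arc), giving $n(T)=2(p-r(T))$ directly. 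Finally, $0\le r(T)\le p-1$ (it cannot be $p$ since $c_1\le\cdots\le c_p\le c_1+q$ with $q\ge 1$ prevents all of $c_1,\dots,c_p,c_1+q$ from being equal... actually $r(T)=p-1$ is attained when $c_1=\cdots=c_p$), so $2\le n(T)\le 2p$.

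The main obstacle I anticipate is \textbf{getting the cyclic bookkeeping exactly right}: the index $i$ runs mod $p+q$ over all arcs (Convention \ref{order}), but the degeneracies are governed by equalities among $c_1,\dots,c_p$ which is a mod-$p$ phenomenon, and one must carefully verify that the wrap-around arc (the one "crossing" the identified vertical sides of the parallelogram) is handled consistently — in particular that the relation $c_p\le c_1+q$ does not secretly contribute a $(p+1)$-st possible degeneracy. I would resolve this by working in a fixed fundamental domain in $\mathbb{U}$ via \eqref{periodicity of covering map} and checking the two boundary arcs (at $a=0$ and $a=1$) by hand against the pictures of Proposition \ref{position}. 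The rest is a finite local case-check matching the four configurations in Proposition \ref{position} against the $c$-sequence, which is routine once the dictionary is pinned down.
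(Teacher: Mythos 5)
Your overall strategy is the paper's: read off $I(T)$ from the local configurations of Proposition \ref{position} inside the $p$-triangle picture of Figure \ref{p-triangles}, identify the forbidden positions with degeneracies of the tuple $(c_1,\dots,c_p)$, and count. But there is a genuine error at the centre of your count: you assert that for an ``outer'' (fan) arc $T_i=[D^{k/p}_{c/q}]$ with $c_k<c<c_{k+1}$ the flip always yields another bridging arc, ``so every such $i$ lies in $I(T)$''. This is the opposite of the truth. The two triangles adjacent to such a fan arc are $\{(\tfrac{k}{p})_\partial;(\tfrac{c-1}{q})_{\partial'},(\tfrac{c}{q})_{\partial'}\}$ and $\{(\tfrac{k}{p})_\partial;(\tfrac{c}{q})_{\partial'},(\tfrac{c+1}{q})_{\partial'}\}$, so the quadrilateral containing $T_i$ as a diagonal has one vertex on $\partial$ and three on $\partial'$, its other diagonal is the peripheral arc $[D_{\frac{c-1}{q},\frac{c+1}{q}}]$, and $\mu_i(T)$ is \emph{never} a tilting bundle. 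This is exactly the first picture in Proposition \ref{position}(2) (three arcs sharing their endpoint on $\partial$), which your item (i) misdescribes: that bad configuration has nothing to do with two $p$-triangles sharing a $\partial'$-vertex. With your claim, $n(T)$ would be at least the number of fan arcs, contradicting the bound $n(T)\le 2p$ you are proving as soon as $q$ is large relative to $p$ (for $T^0_0$ in type $(1,5)$ there are four fan arcs but $n(T)=2$). The correct statement, which is what the paper's proof records via the function $m_T$, is that $\mu_i(T)$ is a tilting bundle exactly when $T_i$ is a side of a $p$-triangle not shared with the adjacent $p$-triangle; the $2p$ side-incidences lose $2$ for each coincidence $c_k=c_{k+1}$ (the shared side is one arc counted in two incidences, and that single arc is non-mutable), giving $n(T)=2p-2r(T)$. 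Your final arithmetic $2(p+q)-2q-2r(T)$ lands on the right number only by accident: there are $p+q$ flips in total (one per arc), not $2(p+q)$, and the quantity you subtract for the fan arcs is not their actual number, which is $q-p+r(T)$.

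A secondary point: the wrap-around worry you flag as the ``main obstacle'' is real, but your proposed resolution (verify that $c_p\le c_1+q$ contributes no extra degeneracy) would fail. When $c_p=c_1+q$ the right side of the $p$-th triangle is identified with the left side of the first one under \eqref{periodicity of covering map}, so it is a shared, non-mutable side even though $J(T)$ as defined only ranges over $1\le i\le p-1$; the boundary index must be treated cyclically on the same footing as the interior ones for the formula $n(T)=2(p-r(T))$ to hold in that case.
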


\begin{proof}
For any tilting bundle $T$, there exists a unique $(c_1, c_2, \cdots, c_p)\in\Lambda_{(p,q)}^0$ associated to $T$ by Theorem \ref{bijection between tilting bundle and combination vertices}, moreover, $T$ corresponds to the following triangulation of (a parallelogram in) $\mathbb{U}$:
\begin{figure}[H]
\begin{tikzpicture}
\draw[-](0,0)--(0.3,1.8);
\draw[-](0,0)--(6,0);
\draw[-](6,0)--(6.3,1.8);
\draw[-](0.3,1.8)--(6.3,1.8);
\node()at(0,0){\tiny$\bullet$};
\node()at(6,0){\tiny$\bullet$};
\node()at(0.3,1.8){\tiny$\bullet$};
\node()at(6.3,1.8){\tiny$\bullet$};
\node()at(1.1,1.8){\tiny$\bullet$};
\draw[-](0,0)--(1.1,1.8);
\node()at(2.7,1.8){\tiny$\bullet$};
\node()at(3.5,1.8){\tiny$\bullet$};
\node()at(4.3,1.8){\tiny$\bullet$};
\node()at(2,0){\tiny$\bullet$};
\node()at(1.3,0){\tiny$\bullet$};
\node()at(2.6,0){\tiny$\bullet$};
\draw[-](2.6,0)--(3.5,1.8);
\node()at(1.3,-0.3){\tiny$\frac{c_i-1}{q}$};
\node()at(2.6,-0.3){\tiny$\frac{c_i+1}{q}$};
\node()at(3.7,0){\tiny$\bullet$};
\draw[-](2.7,1.8)--(2,0);
\draw[-](3.5,1.8)--(2,0);
\draw[-](2.7,1.8)--(1.3,0);
\draw[-](3.5,1.8)--(3.7,0);
\draw[-](4.3,1.8)--(3.7,0);
\node()at(5.5,1.8){\tiny$\bullet$};
\node()at(4.8,0){\tiny$\bullet$};
\draw[-](5.5,1.8)--(4.8,0);
\draw[-](6.3,1.8)--(4.8,0);
\node()at(0,-0.3){\tiny$\frac{c_1}{q}$};

\node()at(2,-0.3){\tiny$\frac{c_i}{q}$};
\node()at(3.8,-0.3){\tiny$\frac{c_{i+1}}{q}$};
\node()at(4.8,-0.3){\tiny$\frac{c_p}{q}$};
\node()at(0.3,2.1){\tiny$0$};
\node()at(1.1,2.1){\tiny$\frac{1}{p}$};
\node()at(2.7,2.1){\tiny$\frac{i-1}{p}$};
\node()at(3.5,2.1){\tiny$\frac{i}{p}$};
\node()at(4.3,2.1){\tiny$\frac{i+1}{p}$};
\node()at(5.5,2.1){\tiny$\frac{p-1}{p}$};
\node()at(6.3,2.1){\tiny$1$};
\node()at(6,-0.3){\tiny$\frac{c_1}{q}+1$};
\draw[line width=1pt,dotted](0.4,-0.2)--(0.9,-0.2);
\draw[line width=1pt,dotted](4.2,-0.2)--(4.5,-0.2);
\draw[line width=1pt,dotted](5.1,-0.2)--(5.5,-0.2);
\draw[line width=1pt,dotted](1.65,2)--(2.15,2);
\draw[line width=1pt,dotted](4.7,2)--(5.1,2);
\draw[line width=1pt,dotted](3,-0.2)--(3.4,-0.2);
\node()at(-1,0){\tiny$\partial^{\prime}$};
\node()at(-1,1.8){\tiny$\partial$};
\end{tikzpicture}
\end{figure}
We denote by $m_{T}(T_i)=1$ if $\mu_{i}(T)$ is a tilting bundle and $m_{T}(T_i)=0$ otherwise.
Combining the above figure and Proposition \ref{position}, we get

$$m_{T}([D^{b}_{a}])=
  \left\{
  \begin{array}{lll}
    1,&& {\rm if\; } a=\frac{c_i}{q},\, b=\frac{i-1}{p} \;{\rm or}\;b=\frac{i}{p} {\rm \; for\; some\;} i;\\
   0,&& {\rm otherwise} .
  \end{array}
\right.$$
It follows that $n(T)=2(p-r(T))$ and then $2\leq n(T)\leq 2p.$
\end{proof}

By Proposition \ref{the bound of index}, we have the following corollary.

\begin{cor}
Keep notations as above. Let $T$ be a tilting bundle in ${\rm coh}\mbox{-}\mathbb{X}(p,q)$. Then
\begin{itemize}
  \item[(1)] $n(T)=2$ if and only if $c_1=c_2=\cdots=c_p.$
\item[(2)] $n(T)=2p$ if and only if $c_1\textless c_2\textless\cdots\textless c_p.$
  \end{itemize}
\end{cor}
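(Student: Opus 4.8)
The plan is to obtain both statements as immediate consequences of Proposition~\ref{the bound of index}, which gives the identity $n(T)=2(p-r(T))$, where $r(T)=|J(T)|$, $J(T)=\{i\mid c_i=c_{i+1},\ 1\le i\le p-1\}$, and $(c_1,\dots,c_p)=\varphi(T)\in\Lambda_{(p,q)}^0$. So the only work is to translate the numerical conditions $n(T)=2$ and $n(T)=2p$ into conditions on the sequence $(c_1,\dots,c_p)$.

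For part (1): from $n(T)=2(p-r(T))$ we see that $n(T)=2$ is equivalent to $r(T)=p-1$. Since $J(T)$ is a subset of the $(p-1)$-element set $\{1,\dots,p-1\}$, the equality $|J(T)|=p-1$ forces $J(T)=\{1,\dots,p-1\}$, that is, $c_i=c_{i+1}$ for every $1\le i\le p-1$; by transitivity this is precisely $c_1=c_2=\cdots=c_p$. Conversely, if $c_1=\cdots=c_p$ then $J(T)=\{1,\dots,p-1\}$, hence $r(T)=p-1$ and $n(T)=2$.

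For part (2): likewise $n(T)=2p$ is equivalent to $r(T)=0$, i.e. $J(T)=\emptyset$, i.e. $c_i\neq c_{i+1}$ for all $1\le i\le p-1$. Because the defining condition of $\Lambda_{(p,q)}^0$ already supplies the weak inequalities $c_1\le c_2\le\cdots\le c_p$, the non-vanishing of every difference $c_{i+1}-c_i$ upgrades these to strict inequalities $c_1<c_2<\cdots<c_p$; the converse direction is immediate from the same reasoning.

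I do not expect any genuine obstacle here: all the substance is carried by Proposition~\ref{the bound of index}, and what remains is the elementary bookkeeping of converting the cardinality constraints $r(T)=p-1$ and $r(T)=0$ into the displayed chains of equalities, respectively strict inequalities, using only the definition of $J(T)$ together with the monotonicity built into $\Lambda_{(p,q)}^0$.
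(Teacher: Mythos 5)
Your proposal is correct and follows exactly the route the paper intends: the paper states this corollary as an immediate consequence of Proposition~\ref{the bound of index} (the identity $n(T)=2(p-r(T))$) without writing out the details, and your bookkeeping with $J(T)$ and the monotonicity built into $\Lambda_{(p,q)}^0$ is precisely the omitted verification.
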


\begin{rem}
Let $T$ be a tilting bundle, which corresponds to a triangulation $\Gamma$ of $A_{p,q}$ by Theorem \ref{triangulation and tilting}.
Let $Q_{\Gamma}$ be the quiver associated to the triangulation $\Gamma$. Then $n(T)=2$ if and only if $\mathbf{k}Q_{\Gamma}$ is a canonical algebra of type $(p,q)$.
\end{rem}

For any $a\in \frac{\mathbb{Z}}{p},\, b\in \frac{\mathbb{Z}}{q}$, denote by $$T^{a}_{b}=[D^{a}_{b}]\oplus [D^{a+\frac{1}{p}}_{b}]\oplus [D^{a+\frac{1}{p}}_{b+\frac{1}{q}}]\oplus [D^{a+\frac{2}{p}}_{b+\frac{1}{q}}]\oplus \cdots\oplus [D^{a+1}_{b+\frac{p-1}{q}}]\oplus [D^{a+1}_{b+\frac{p}{q}}]\oplus \cdots\oplus [D^{a+1}_{b+\frac{q-1}{q}}].$$
Intuitively, $T^{a}_{b}$ corresponds to the following triangulation of a parallelogram in $\mathbb{U}$ up to shift:
\begin{figure}[H]
\begin{tikzpicture}
\draw[-](0,0)--(6,0);
\draw[-](0.3,1.5)--(6.3,1.5);
\draw[-,dashed](0,0)--(0.3,1.5);
\draw[-,dashed](6,0)--(6.3,1.5);
\node()at(0,0){\tiny$\bullet$};
\node()at(6,0){\tiny$\bullet$};
\node()at(6.15,-0.25){\tiny$b+1$};
\node()at(0.3,1.5){\tiny$\bullet$};
\node()at(0.3,1.75){\tiny$a$};
\node()at(0,-0.25){\tiny$b$};
\node()at(6.3,1.5){\tiny$\bullet$};
\node()at(0.6,0){\tiny$\bullet$};
\node()at(1.2,0){\tiny$\bullet$};
\node()at(1.1,1.5){\tiny$\bullet$};
\node()at(1.9,1.5){\tiny$\bullet$};
\draw[-](5.4,0)--(6.3,1.5);
\node()at(5.4,0){\tiny$\bullet$};
\node()at(5.35,-0.25){\tiny$b+\frac{q-1}{q}$};
\draw[-](3.6,0)--(6.3,1.5);
\node()at(3.6,0){\tiny$\bullet$};
\node()at(3.6,-0.25){\tiny$b+\frac{p-1}{q}$};
\draw[-](4.2,0)--(6.3,1.5);
\draw[-](0,0)--(1.1,1.5);
\draw[-](0.6,0)--(1.1,1.5);
\draw[-](0.6,0)--(1.9,1.5);
\draw[-](1.2,0)--(1.9,1.5);
\draw[-](5.5,1.5)--(3.6,0);
\node()at(1.1,1.75){\tiny$a+\frac{1}{p}$};
\node()at(2,1.75){\tiny$a+\frac{2}{p}$};
\node()at(0.6,-0.25){\tiny$b+\frac{1}{q}$};
\node()at(1.45,-0.35){\tiny$b+\frac{2}{q}$};
\node()at(5.5,1.75){\tiny$a+\frac{p-1}{p}$};
\node()at(6.6,1.75){\tiny$a+1$};
\node()at(5.5,1.5){\tiny$\bullet$};
\node()at(4.2,0){\tiny$\bullet$};
\draw[line width=1pt,dotted](2.4,-0.3)--(2.7,-0.3);
\draw[line width=1pt,dotted](4.3,-0.3)--(4.7,-0.3);
\draw[line width=1pt,dotted](3.1,1.8)--(3.7,1.8);
\draw[line width=1pt,dotted](5.2,0.6)--(5.5,0.6);
\draw[line width=1pt,dotted](2.4,0.7)--(3.3,0.7);
\node()at(-1,1.5){\footnotesize{${\partial}$}};
\node()at(-1,0){\footnotesize{${\partial^{\prime}}$}};
\end{tikzpicture}
\end{figure}
\noindent

We have the following observation.

\begin{prop}\label{translation}
For any $a\in \frac{\mathbb{Z}}{p},\, b\in \frac{\mathbb{Z}}{q}$, $T^{a}_{b}$ is a tilting bundle with $n(T^{a}_{b})=2p$.
Moreover, $$\mu_{2p-1}\cdot\mu_{2p-3}\cdots\cdot\mu_{1}(T^{a}_{b})=T^{a}_{b-\frac{1}{q}}\;\;{\rm and}\;\;\mu_{2p}\cdot\mu_{2p-2}\cdots\cdot\mu_{2}(T^{a}_{b})=T^{a}_{b+\frac{1}{q}}.$$
\end{prop}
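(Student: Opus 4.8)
The plan is to translate everything into the geometry of $\mathbb U$: realise $T^a_b$ as the triangulation pictured above the statement and then read off tilting-bundle-ness, the index $n$, and all the mutations by flipping arcs directly. By \eqref{periodicity of covering map} we may shift so that $a=0$. The $p+q$ arcs occurring in the definition of $T^a_b$ are exactly the edges of the displayed triangulation of a fundamental parallelogram in $\mathbb U$ --- a ``zigzag'' of the first $2p$ arcs between $\partial'$ and $\partial$, followed by a ``fan'' of the remaining $q-p$ arcs at the marked point $(a+1)_\partial$. These are pairwise non-crossing positive bridging arcs and there are $p+q$ of them, so they form a triangulation of $A_{p,q}$; by Theorem \ref{triangulation and tilting} $T^a_b$ is a tilting sheaf, and since every summand is a line bundle (Theorem \ref{corresponding}) it is a tilting bundle. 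To get $n(T^a_b)=2p$ I would run through $T_1,\dots,T_{p+q}$ and apply Proposition \ref{position}: for each of the $2p$ zigzag summands the two triangles adjacent to $T_i$ have their third vertices on opposite boundaries, so $T_{i-1},T_i,T_{i+1}$ form one of the two configurations of Proposition \ref{position}(1); for each of the $q-p$ fan summands both adjacent triangles have their third vertex on $\partial'$, giving a configuration of Proposition \ref{position}(2). Hence $I(T^a_b)=\{1,\dots,2p\}$ and $n(T^a_b)=2p$. (Alternatively $\varphi(T^a_b)=(c_1,\dots,c_p)$ with $c_1<\cdots<c_p$ by \eqref{the map phi}, so $r(T^a_b)=0$ and $n(T^a_b)=2(p-r(T^a_b))=2p$ by Proposition \ref{the bound of index}.)

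Next I would compute the relevant flips. In the normal form of $T^a_b$ the zigzag summands are $T_1=[D^a_b]$, $T_{2k}=[D^{a+k/p}_{b+(k-1)/q}]$ for $1\le k\le p$ and $T_{2k+1}=[D^{a+k/p}_{b+k/q}]$ for $1\le k\le p-1$. For each $k$, the quadrilateral in $\mathbb U$ of which $T_{2k-1}$ is a diagonal has as its two further vertices the third vertices $(b+(k-2)/q)_{\partial'}$ and $(a+k/p)_\partial$ of the two adjacent triangles (for $k=1$ this is read off after a unit shift, giving $(b-1/q)_{\partial'}$ and $(a+1/p)_\partial$), so by Proposition \ref{mutation and flip}
$$\mu_{T^a_b}(T_{2k-1})=[D^{a+k/p}_{b+(k-2)/q}]\quad(1\le k\le p),\qquad \mu_{T^a_b}(T_{2k})=[D^{a+(k-1)/p}_{b+k/q}]\quad(1\le k\le p).$$
Now the quadrilaterals of $T_1,T_3,\dots,T_{2p-1}$ pairwise share no diagonal (their only common edges are the even summands $T_{2k}$ and boundary segments), so these flips are independent; moreover when $T_{2k-1}$ is removed, the new arc $[D^{a+k/p}_{b+(k-2)/q}]$ has top coordinate $a+k/p$ and the smallest bottom coordinate among the arcs sharing that top, hence re-enters the normal-form order at precisely the index vacated by $T_{2k-1}$. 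Thus $\mu_{2p-1}\cdots\mu_3\mu_1$ successively flips the arcs originally labelled $T_1,T_3,\dots,T_{2p-1}$, and its output is the tilting bundle with summand set $\big(\{T_1,\dots,T_{p+q}\}\setminus\{T_1,T_3,\dots,T_{2p-1}\}\big)\cup\{[D^{a+k/p}_{b+(k-2)/q}]:1\le k\le p\}$.

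Finally I would match this against $T^a_{b-1/q}$. The surviving even zigzag arcs $T_2,\dots,T_{2p}$ together with the fan arcs $[D^{a+1}_{b+p/q}],\dots,[D^{a+1}_{b+(q-1)/q}]$ are precisely the $q$ summands common to $T^a_b$ and $T^a_{b-1/q}$ --- here one uses \eqref{periodicity of covering map}, e.g.\ $[D^{a+1}_{b+(q-1)/q}]=[D^a_{b-1/q}]$, to recognise the last fan arc as the first zigzag arc of $T^a_{b-1/q}$ --- while the $p$ arcs $[D^{a+k/p}_{b+(k-2)/q}]$ are exactly the remaining (zigzag) summands of $T^a_{b-1/q}$; this gives $\mu_{2p-1}\cdots\mu_1(T^a_b)=T^a_{b-1/q}$. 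The identity $\mu_{2p}\cdots\mu_2(T^a_b)=T^a_{b+1/q}$ is obtained in the same way, using $\mu_{T^a_b}(T_{2k})=[D^{a+(k-1)/p}_{b+k/q}]$ and the analogous fan/zigzag identification for $T^a_{b+1/q}$. The step I expect to require the most care is this matching together with the end of the previous paragraph: keeping the normal-form indexing of Convention \ref{order} under control through the composite (so that ``$\mu_{2k-1}$'' really acts on the arc one wants), and identifying the two explicit summand sets via the periodicity relation --- the individual flip computations of Step 2 are short and local.
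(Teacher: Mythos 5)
Your proposal is correct and follows essentially the same route as the paper: the first assertion is deduced immediately from Theorem \ref{triangulation and tilting} and Proposition \ref{the bound of index}, and the mutation formulas are obtained by flipping each odd-indexed (resp.\ even-indexed) arc of the displayed zigzag triangulation and identifying the resulting summand set with that of $T^a_{b\mp 1/q}$ via \eqref{periodicity of covering map}. One caveat on the point you yourself flag as delicate: your claim that the flipped arc re-enters the normal form at the vacated index is true for $k\ge 2$ but fails for $k=1$, since after removing $T_1=[D^a_b]$ no remaining summand has top coordinate $a$, so the normal form of $\mu_1(T^a_b)$ must begin with the periodicity shift $[D^a_{b-1/q}]$ of $T_{p+q}$ and every other index moves up by one. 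This does not damage the argument — the $p$ arcs being flipped are pairwise not in a common triangle, so by Proposition \ref{pentagon}(2) the flips commute and the composite is unambiguous once the indices are read off the original labelling of $T^a_b$ — and the paper's own proof treats this point with exactly the same informality.
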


\begin{proof}
The first result follows from Theorem \ref{triangulation and tilting} and Proposition \ref{the bound of index} immediately. Note that the triangulation $\mu_{2p-1}\cdot\mu_{2p-3}\cdots\cdot\mu_{1}(T^{a}_{b})$ can be obtained from that of $T^{a}_{b}$ (black arcs and green arcs) by iterated mutations, and each mutation $\mu_{2i-1}$ for $1\leq i\leq p$ replaces one green arc by a red arc as below:

\begin{figure}[H]
\begin{tikzpicture}
\draw[-](0,0)--(6,0);
\draw[-](0.3,1.5)--(7.1,1.5);
\node()at(7.1,1.5){\tiny$\bullet$};
\draw[-,dashed,green](0,0)--(0.3,1.5);
\draw[-,dashed,green](6,0)--(6.3,1.5);
\node()at(0,0){\tiny$\bullet$};
\node()at(6,0){\tiny$\bullet$};
\node()at(6.15,-0.25){\tiny$b+1$};
\node()at(0.3,1.5){\tiny$\bullet$};
\node()at(0.3,1.75){\tiny$a$};
\node()at(0,-0.25){\tiny$b$};
\node()at(6.3,1.5){\tiny$\bullet$};
\node()at(0.6,0){\tiny$\bullet$};
\node()at(1.2,0){\tiny$\bullet$};
\node()at(1.1,1.5){\tiny$\bullet$};
\node()at(2.7,1.5){\tiny$\bullet$};
\node()at(1.9,1.5){\tiny$\bullet$};
\draw[-](5.4,0)--(6.3,1.5);
\node()at(5.4,0){\tiny$\bullet$};
\node()at(5.35,-0.25){\tiny$b+\frac{q-1}{q}$};
\draw[-](3.6,0)--(6.3,1.5);
\node()at(3.6,0){\tiny$\bullet$};
\node()at(3,0){\tiny$\bullet$};
\draw[-](3,0)--(5.5,1.5);
\node()at(3.6,-0.25){\tiny$b+\frac{p-1}{q}$};
\draw[-,red](0,0)--(1.9,1.5);
\draw[-,red,dashed](5.4,0)--(7.1,1.5);
\draw[-](4.2,0)--(6.3,1.5);
\draw[-](0,0)--(1.1,1.5);
\draw[-,green](0.6,0)--(1.1,1.5);
\draw[-](0.6,0)--(1.9,1.5);
\draw[-,green](1.2,0)--(1.9,1.5);
\draw[-](1.2,0)--(2.7,1.5);
\draw[-,red](0.6,0)--(2.7,1.5);
\draw[-,green](5.5,1.5)--(3.6,0);
\draw[-,red](6.3,1.5)--(3,0);
\node()at(1.1,1.75){\tiny$a+\frac{1}{p}$};
\node()at(2,1.75){\tiny$a+\frac{2}{p}$};
\node()at(0.6,-0.25){\tiny$b+\frac{1}{q}$};
\node()at(1.45,-0.35){\tiny$b+\frac{2}{q}$};
\node()at(5.5,1.75){\tiny$a+\frac{p-1}{p}$};
\node()at(6.5,1.75){\tiny$a+1$};
\node()at(7.4,1.75){\tiny$a+\frac{p+1}{p}$};
\node()at(5.5,1.5){\tiny$\bullet$};
\node()at(4.2,0){\tiny$\bullet$};
\draw[line width=1pt,dotted](2.4,-0.3)--(2.7,-0.3);
\draw[line width=1pt,dotted](4.3,-0.3)--(4.7,-0.3);
\draw[line width=1pt,dotted](3.1,1.8)--(3.7,1.8);
\draw[line width=1pt,dotted](5.2,0.6)--(5.5,0.6);
\draw[line width=1pt,dotted](2.4,0.7)--(3.3,0.7);
\node()at(-1,1.5){\footnotesize{${\partial}$}};
\node()at(-1,0){\footnotesize{${\partial^{\prime}}$}};
\end{tikzpicture}
\end{figure}
\noindent More precisely,
\begin{align}\nonumber
&\mu_{2p-1}\cdot\mu_{2p-3}\cdots\cdot\mu_{1}(T^{a}_{b})\nonumber\\
&=[D^{a+\frac{p+1}{p}}_{b+\frac{q-1}{q}}]\oplus[D^{a+\frac{1}{p}}_{b}]\oplus[D^{a+\frac{2}{p}}_{b}]\oplus\cdots\oplus [D^{a+1}_{b+\frac{p-2}{q}}]\oplus\cdots\oplus[D^{a+1}_{b+\frac{q-1}{q}}]\nonumber\\
&=[D^{a}_{b-\frac{1}{q}}]\oplus[D^{a+\frac{1}{p}}_{b-\frac{1}{q}}]\oplus[D^{a+\frac{1}{p}}_{b}]\oplus[D^{a+\frac{2}{p}}_{b}]\oplus\cdots\oplus [D^{a+1}_{b+\frac{p-2}{q}}]\oplus\cdots\oplus[D^{a+1}_{b+\frac{q-2}{q}}]\nonumber\\
&=T^{a}_{b-\frac{1}{q}}.\nonumber
\end{align}
Similarly, one can obtain the other equation. We are done.
\end{proof}

The tilting bundle $T^{a}_{b}$ plays an important role in the next subsection for the connectedness of the tilting graph of vector bundles category.

\subsection{Connectedness of the tilting graph $\mathcal{G}(\mathcal{T}^{\nu}_{\mathbb{X}})$}

Recall from \cite{BKL2008, GengSF2020} that the \emph{tilting graph} $\mathcal{G}(\mathcal{T}_{\mathbb{X}})$ of ${\rm coh}\mbox{-}\mathbb{X}(p,q)$ has as vertices the isomorphism classes of tilting sheaves in ${\rm coh}\mbox{-}\mathbb{X}(p,q)$, while two vertices are connected by an edge if and only if the associated tilting sheaves differ by precisely one indecomposable direct summand. The \emph{tilting graph} $\mathcal{G}(\mathcal{T}^{\nu}_{\mathbb{X}})$ of ${\rm vec}\mbox{-}\mathbb{X}(p,q)$ is the full subgraph of $\mathcal{G}(\mathcal{T}_{\mathbb{X}})$ consisting of tilting bundles.

The connectedness of titling graph for weighted projective lines has been investigated widely in the literature through category aspect, see for example \cite{BKL2008, HU2005, GengSF2020, FG383}. In this subsection, we investigate the connectedness of $\mathcal{G}(\mathcal{T}^{\nu}_{\mathbb{X}})$ by using our geometric model.

Let $T$ and $T^{\prime}$ be tilting bundles in ${\rm coh}\mbox{-}\mathbb{X}(p,q)$. We say $T$ is \emph{bundle-mutations to} $T'$ if there is a sequence of tilting bundles $T=T^{0},\,T^1,\,\cdots, T^{n-1}$, $T^{n}=T^{\prime}$ such that $T^{i}$ is a mutation of $T^{i-1}$ for any $1\leq i\leq n$.
Under the bijection Theorem \ref{corresponding}, we call a triangulation $\Gamma$ is \emph{bundle-flips to} $\Gamma^{\prime}$ if the associated tilting bundles $T$ and $T'$ satisfy that $T$ is \emph{bundle-mutations to} $T'$.

By Convention \ref{order}, we know that the direct summands of any tilting bundle $T$ can be arranged in a unique order. Hence we can define a map $\iota$ from the set of tilting bundles in ${\rm coh}\mbox{-}\mathbb{X}(p,q)$ to $\mathbb{Z}_2^{p+q}$, in the sense that $\iota(T)_{i}=1$ if $\mu_{i}(T)$ is a tilting bundle, and $\iota(T)_{i}=0$ otherwise.
For example, in the case $p=2$ and $q=2$, if $\mu_{1}(T), \mu_2(T)$ are tilting bundles and $\mu_{3}(T), \mu_4(T)$ are not tilting bundles, we have $\iota(T)=(1,1,0,0)$.

\begin{lem}\label{4 to 3}
Let $T$ be a tilting bundle in ${\rm coh}\mbox{-}\mathbb{X}(p,q)$ and assume $$\iota(T)=(\underbrace{1, \cdots, 1}_{s_1\ \text{times}},\underbrace{0, \cdots, 0}_{t_1\ \text{times}},\underbrace{1, \cdots, 1}_{s_2\ \text{times}},\underbrace{0, \cdots, 0}_{t_2\ \text{times}}).$$ Then $T$ is bundle-mutations to some tilting bundle $T^{\prime}$ satisfying $$\iota(T^{\prime})=(\underbrace{1, \cdots, 1}_{m_1\ \text{times}},\underbrace{0, \cdots, 0}_{n_1\ \text{times}},\underbrace{1, \cdots, 1}_{m_2\ \text{times}})$$ and $m_1+m_2\geq s_1+s_2.$
\end{lem}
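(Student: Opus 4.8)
The plan is to translate the statement into a purely combinatorial assertion about cyclic binary words and then settle that assertion by an extremal argument.

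First I would record how $\iota$ behaves under a single bundle-mutation. Suppose $\mu_i(T)$ is a tilting bundle. Since $\mu_i\bigl(\mu_i(T)\bigr)=T$ is again a tilting bundle, $\iota(\mu_i(T))_i=1$. For an index $j\notin\{i-1,i,i+1\}$ the three summands occupying positions $j-1,j,j+1$ are the same for $T$ and for $\mu_i(T)$ — only the $i$-th summand changes, and by Proposition~\ref{position} (together with Convention~\ref{order}) the new summand again occupies position $i$ in the normal form — hence $\iota(\mu_i(T))_j=\iota(T)_j$. Finally Proposition~\ref{effect of tilting bundle-mutation} says exactly that $\iota(\mu_i(T))_{i\pm1}=1-\iota(T)_{i\pm1}$. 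So a bundle-mutation acts on $\iota\in\mathbb{Z}_2^{p+q}$ by the rule: \emph{pick a position carrying a $1$ and toggle its two cyclic neighbours} (the chosen position stays $1$). In particular the number of $1$'s changes by $+2$ if the chosen position is an isolated $1$, by $0$ if it is an endpoint of a block of $1$'s of length $\ge2$, and by $-2$ if it is an interior point of a block of length $\ge3$.

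Now let $\mathcal R$ be the set of tilting bundles reachable from $T$ by bundle-mutations; by Proposition~\ref{the bound of index} the number of $1$'s of $\iota(T')$ equals $n(T')\le 2p$ for every $T'\in\mathcal R$, so $N:=\max\{\text{number of }1\text{'s in }\iota(T'):T'\in\mathcal R\}$ is finite and $N\ge s_1+s_2$. It suffices to produce $T'\in\mathcal R$ whose word $\iota(T')$ has a single maximal block of $1$'s, of length $\ge s_1+s_2$: reading such a word from position $1$ — after, if necessary, a few length-preserving bundle-mutations that translate this block until it meets position $1$ or position $p+q$ (one peels off a length-$2$ sub-block, slides it around the cycle, and re-merges, which moves the block by $2$, and since the block has length $\ge 2$ this reaches the required placement) — exhibits it in the form $(1^{m_1},0^{n_1},1^{m_2})$ with $m_1+m_2\ge s_1+s_2$. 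To find such a $T'$, choose $T^\ast\in\mathcal R$ with the maximal number $N$ of $1$'s and, among all such, with the smallest number $k$ of maximal blocks of $1$'s; I claim $k=1$. If $k\ge2$, then $\iota(T^\ast)$ has no isolated $1$ (a $+2$-move would contradict maximality of $N$). A block of length exactly $2$ could be translated — each step toggles the two ends of the block, a length-preserving bundle-mutation — until it meets a neighbouring block, merging two blocks into one without changing $N$ and contradicting minimality of $k$. Finally a block of length $s\ge3$: mutating at one of its ends replaces it, length-preservingly, by a block of length $2$ together with a block of length $s-2$. Merging the short piece into an adjacent block and iterating on the longer one, an odd block is reduced to length $3$ — one more mutation then produces an isolated $1$, contradicting maximality of $N$ — while an even block is reduced to length $4$, hence to two length-$2$ blocks whose outward merges lower $k$, again a contradiction (when $k=2$ the two neighbours of a block coincide, but both short pieces can still be merged into it in turn). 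Therefore $k=1$, and $\iota(T^\ast)$ is a single block of $1$'s of length $N\ge s_1+s_2$, which gives the desired $T'$ and finishes the proof.

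The conceptual content is small; the work is bookkeeping, and the two delicate points both lie in the first two steps. One is the claim used in the flip rule that $\mu_i$ changes only the $i$-th summand \emph{and} that the replacement again sits in position $i$ of the normal form of Convention~\ref{order}; this has to be extracted carefully from Proposition~\ref{position}. The other is the termination of the peeling procedure for even-length blocks — making sure the detached length-$2$ pieces can always be merged outward, including the degenerate case $k=2$ — together with the final repositioning of the single block so that $\iota$ literally has the shape $(1^{m_1},0^{n_1},1^{m_2})$. Neither is difficult, but each requires care.
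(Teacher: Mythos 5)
Your argument is correct, but it proves the lemma by a genuinely different route than the paper. The paper's proof is an explicit construction: it splits into cases according to the parity of $s_2$ (and, in the odd case, according to whether $t_2\geq t_1$), writes down concrete mutation sequences $\nu_k$, $\delta_k$, and tracks $\iota$ along them using Proposition \ref{effect of tilting bundle-mutation}; in effect it slides the second block of $1$'s until it merges with the first. You instead first package Propositions \ref{position} and \ref{effect of tilting bundle-mutation} into a local rewriting rule on cyclic binary words (mutate at a $1$, toggle its two cyclic neighbours, everything else fixed — including the fact that the new summand re-occupies position $i$ of the normal form, which the paper also relies on implicitly), and then run an extremal argument: maximize the number of $1$'s, then minimize the number of blocks, and rule out $k\geq 2$ by the isolated-$1$, length-$2$, odd-length and even-length analyses. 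I checked the delicate points: the toggle rule does hold (in both configurations of Proposition \ref{position} the flipped diagonal has coordinates $(a_{i+1},b_{i-1})$ resp. $(a_{i-1},b_{i+1})$ and so sits at position $i$ of the normal form), the peel-and-merge step preserves the gap between blocks so it can be iterated, and the final repositioning by steps of $2$ always reaches a placement of the literal shape $(1^{m_1},0^{n_1},1^{m_2})$ since positions $1$ and $p+q$ have opposite parities when $p+q$ is even and every position is reachable when $p+q$ is odd. What your approach buys is the elimination of the parity case analysis and, in fact, a direct proof of the stronger statement (any $\iota(T)$ reduces to a single cyclic block) that Proposition \ref{any tilting bundle transformed into unique maximal} ultimately needs; what it costs is the extra bookkeeping in the toggle rule and the repositioning step, the latter of which the paper avoids because its explicit sequences land in the right position automatically (and because the relabeling \eqref{periodicity of covering map} is available anyway).
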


\begin{proof}
Note that $p+q=s_1+t_1+s_2+t_2$. We divide the proof into two cases by considering $s_2$ is even or not.

If $s_2$ is even, then $s_2=2s$ for some $s\in \mathbb{Z}_{\geq 1}$. Denote by $$\mu_{[a,b]}=\mu_a\cdot\mu_{a-1}\cdots\cdot\mu_{b}, \;\;b\leq a.$$

Let
$$\nu_{k}=\mu_{[p+q+1-2k, \,\,p+q-t_2+2-2k ]},\;\;1\leq k\leq s.$$
By iterative use of Proposition \ref{effect of tilting bundle-mutation}, we obtain $$\iota(\nu_{s}\cdot\nu_{s-1}\cdots\nu_1(T))=(\underbrace{1, \cdots, 1}_{s_1\ \text{times}},\underbrace{0, \cdots, 0}_{t_1+t_2\ \text{times}},\underbrace{1, \cdots, 1}_{s_2\ \text{times}}),$$
and $\nu_{s}\cdot\nu_{s-1}\cdots\nu_1(T)$ is a tilting bundle.

If $s_2$ is odd, then $s_2=2s+1$ for some $s\in \mathbb{Z}_{\geq 0}$.
We have
$$\iota(\nu_{s}\cdot\nu_{s-1}\cdots\nu_1(T))=(\underbrace{1, \cdots, 1}_{s_1\ \text{times}},\underbrace{0, \cdots, 0}_{t_1\ \text{times}},1,\underbrace{0, \cdots, 0}_{t_2\ \text{times}},\underbrace{1, \cdots, 1}_{2s\ \text{times}}).$$
If $t_2\geq t_1,$ let
$$\delta_{k}=\mu_{[p+q-2s-2k,\,\,p+q-t_2-2s+1-k]},\;\;1\leq k\leq t_1.$$

\noindent Then $$\iota(\delta_{t_1}\cdot\delta_{t_1-1}\cdots\delta_{1}\cdot\nu_{s}\cdot\nu_{s-1}\cdots\nu_{1}(T))=(\underbrace{1, \cdots, 1}_{s_1+1\ \text{times}},\underbrace{0, \cdots, 0}_{t_2-t_1\ \text{times}},\underbrace{1, \cdots, 1}_{2(s+t_1) \text{times}}),$$
\noindent and $s_{1}+1+2(s+t_1)=s_1+s_2+t_1\geq s_{1}+s_{2}$.

If $t_2 < t_1$, let
$$\delta_{k}=\mu_{s_1+2k}\cdot\mu_{s_1+2k+1}\cdots\cdot\mu_{s_1+t_1+k},\;\;1\leq k\leq t_2.$$
\noindent Then
$$\iota(\delta_{t_2}\cdot\delta_{t_2-1}\cdots\delta_{1}\cdot\nu_{s}\cdot\nu_{s-1}\cdots\nu_{1}(T))=(\underbrace{1, \cdots, 1}_{s_1+2t_2\ \text{times}},\underbrace{0, \cdots, 0}_{t_1-t_2\ \text{times}},\underbrace{1, \cdots, 1}_{s_2\ \text{times}}),$$
\noindent and $s_{1}+2t_{2}+s_2\geq s_{1}+s_{2}$.
We are done.
\end{proof}

Now we can state the main result of this subsection.
\begin{prop}\label{any tilting bundle transformed into unique maximal}
Any tilting bundle $T$ in ${\rm coh}\mbox{-}\mathbb{X}(p,q)$ is bundle-mutations to $T^{a}_{b}$ for some $a\in \frac{\mathbb{Z}}{p},\, b\in \frac{\mathbb{Z}}{q}$. Consequently, the tilting graph $\mathcal{G}(\mathcal{T}_{\mathbb{X}}^{\nu})$ is connected.
\end{prop}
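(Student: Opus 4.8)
The plan is to argue entirely with the combinatorial invariant $\iota(T)\in\mathbb{Z}_2^{p+q}$ introduced above, using Proposition~\ref{the bound of index}, Lemma~\ref{4 to 3} and Proposition~\ref{translation}. Read $\iota(T)$ cyclically, so that it is a string of alternating runs of $1$'s and $0$'s; by Proposition~\ref{the bound of index} the number of $1$'s equals $n(T)=2(p-r(T))\le 2p$. First I would show that $T$ is bundle-mutations to a tilting bundle $T'$ for which $\iota(T')$ has a single cyclic run of $1$'s. If $\iota(T)$ has at least two runs of $1$'s, choose two that are cyclically consecutive; the sub-word consisting of these two $1$-runs together with the two $0$-runs flanking them has the shape $1^{s_1}0^{t_1}1^{s_2}0^{t_2}$, and the mutation sequences appearing in the proof of Lemma~\ref{4 to 3} are supported on the block of consecutive indices occupied by this sub-word, so --- by Proposition~\ref{effect of tilting bundle-mutation}, which records only the effect of a mutation $\mu_i$ at the positions $i-1,i,i+1$ --- they act on that block exactly as in Lemma~\ref{4 to 3}. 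Hence one application merges the two chosen $1$-runs into one without decreasing the total number of $1$'s. Iterating strictly decreases the number of $1$-runs, so this terminates at the desired $T'$.

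Next I would identify the terminal configuration. A tilting bundle with a single cyclic $1$-run must have $r(T')=0$: one checks from the description of the $1$-positions in the proof of Proposition~\ref{the bound of index} that a coincidence $c_i=c_{i+1}$ splits a $1$-run, so a single run forces all $c_i$ distinct and hence $n(T')=2p$. Writing $\varphi(T')=(c_1,\dots,c_p)$ and setting $c_{p+1}:=c_1+q$, a single $1$-run moreover forces all the cyclic gaps $c_{i+1}-c_i$ to equal $1$ except for precisely one of them, which is then $q-p+1$. Comparing with the explicit triangulation that defines $T^{a}_{b}$, such a $T'$ is exactly some $T^{a}_{b}$, with $a\in\frac{\mathbb{Z}}{p}$ recording which fan carries the long gap and $b\in\frac{\mathbb{Z}}{q}$ its base point. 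Combined with the previous paragraph, every tilting bundle in ${\rm coh}\mbox{-}\mathbb{X}(p,q)$ is bundle-mutations to some $T^{a}_{b}$.

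Finally, for the assertion that $\mathcal{G}(\mathcal{T}^{\nu}_{\mathbb{X}})$ is connected it suffices to know that all the $T^{a}_{b}$ lie in one component. Proposition~\ref{translation} gives $\mu_{2p-1}\mu_{2p-3}\cdots\mu_1(T^{a}_{b})=T^{a}_{b-1/q}$ and $\mu_{2p}\mu_{2p-2}\cdots\mu_2(T^{a}_{b})=T^{a}_{b+1/q}$, so all $T^{a}_{b}$ with a fixed $a$ are connected; performing one further bundle-mutation at a $1$-position of $T^{a}_{b}$ and then re-running the two reduction steps above lands on a $T^{a'}_{b'}$ with $a'\neq a$ (as one verifies directly on the staircase picture), so all the $T^{a}_{b}$ are pairwise connected. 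Hence any two tilting bundles are joined by a path of bundle-mutations, i.e. $\mathcal{G}(\mathcal{T}^{\nu}_{\mathbb{X}})$ is connected.

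The step I expect to be the real obstacle is the first reduction when $\iota(T)$ has three or more $1$-runs: Lemma~\ref{4 to 3} is phrased for a word that is globally of the form $1^{s_1}0^{t_1}1^{s_2}0^{t_2}$, so one must check carefully that the mutation sequences in its proof are genuinely local --- touching only the indices inside the chosen window and leaving the rest of $\iota(T)$ fixed --- before using it as a ``merge two adjacent $1$-runs'' black box. A lesser point to pin down is the claim in the last paragraph that a single bundle-mutation out of $T^{a}_{b}$ reduces to some $T^{a'}_{b'}$ with a different $a$.
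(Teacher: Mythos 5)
Your proposal follows the same route as the paper's own proof: read $\iota(T)$ cyclically, merge adjacent runs of $1$'s by local applications of Lemma~\ref{4 to 3} until a single cyclic $1$-run remains, identify the result with some $T^{a}_{b}$ via the fan/gap analysis underlying Proposition~\ref{the bound of index}, and conclude with Proposition~\ref{translation}. Your second paragraph (all cyclic gaps equal $1$ except one of size $q-p+1$; note that the wrap-around coincidence $c_p=c_1+q$ must also be excluded, which a single $1$-run does, since the shared edge of the first and last fans is then a $0$ sitting between two $1$'s) is a correct and more explicit version of the paper's terse appeal to ``the proof of Proposition~\ref{the bound of index}''. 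The locality worry you raise about Lemma~\ref{4 to 3} is legitimate but is equally present, and unaddressed, in the paper, which applies the lemma to the first four runs of a longer word; it can be patched by observing that the sequences $\nu_k,\delta_k$ never mutate the first or last index of the window, and a mutation at position $i$ only alters $\iota$ at positions $i-1,i,i+1$, so the word outside the window is untouched.

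The one genuine gap is in your final paragraph, and it is precisely the point at which you are more scrupulous than the paper, which passes over it in silence: Proposition~\ref{translation} only connects $T^{a}_{b}$ to $T^{a}_{b\pm\frac{1}{q}}$ with the \emph{same} $a$, and for $p\neq q$ the $p$ families $\{T^{j/p}_{b}\}_{b}$, $j=0,\dots,p-1$, are pairwise disjoint (for $(p,q)=(2,3)$ one has $\varphi(T^{0}_{b})=(c,c+1)$ while $\varphi(T^{1/2}_{b})=(c,c+2)$). Your proposed fix --- one further bundle-mutation followed by re-running the reduction lands in a different family --- is asserted without verification, and even granted it does not suffice: you need all $p$ families to lie in one component, not merely that each family is adjacent to \emph{some} other one. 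A clean repair: starting from $\varphi(T^{0}_{b})=(c,c+1,\dots,c+p-1)$, increase the last coordinate one unit at a time up to $c+q-1$; each step stays inside $\Lambda_{(p,q)}^0$ and is a bundle-mutation (the flip of the right edge of the $p$-th fan, as in the construction of $\mu_p^{+}$ used to prove Theorem~\ref{description of tilting graph of vector bundles}), and the endpoint $(c,\dots,c+p-2,c+q-1)$ has its unique long gap between the $(p-1)$-st and $p$-th fans, hence equals $T^{(p-1)/p}_{b'}$ for a suitable $b'$. Iterating walks the long gap through all $p$ positions and chains the families together. With this step supplied (or, alternatively, by deducing connectedness directly from the graph isomorphism with $\Lambda_{(p,q)}$), your argument is complete.
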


\begin{proof}
According to equations (\ref{periodicity of covering map}) and Proposition \ref{the bound of index}, we can always assume $\mu_1(T)$ is a tilting bundle. Hence $\iota(T)$ must be one of the following two forms by considering $\mu_{p+q}(T)$ is a tilting bundle or not:
$$\iota(T)=(\underbrace{1, \cdots, 1}_{s_1\ \text{times}},\underbrace{0, \cdots, 0}_{t_1\ \text{times}},\underbrace{1, \cdots, 1}_{s_2\ \text{times}},\underbrace{0, \cdots, 0}_{t_2\ \text{times}}, \underbrace{1, \cdots, 1}_{s_3\ \text{times}},\cdots\cdots\underbrace{1, \cdots, 1}_{s_m\ \text{times}}),$$
\noindent or
$$\iota(T)=(\underbrace{1, \cdots, 1}_{s_1\ \text{times}},\underbrace{0, \cdots, 0}_{t_1\ \text{times}},\underbrace{1, \cdots, 1}_{s_2\ \text{times}},\underbrace{0, \cdots, 0}_{t_2\ \text{times}},\underbrace{1, \cdots, 1}_{s_3\ \text{times}},\cdots\cdots\underbrace{1, \cdots, 1}_{s_m\ \text{times}},\underbrace{0, \cdots, 0}_{t_m\ \text{times}}).$$

Using Lemma \ref{4 to 3}, we can reduce $\iota(T)$ to the following shapes:
$$(\underbrace{1, \cdots, 1}_{m_1^{\prime}\ \text{times}},\underbrace{0, \cdots, 0}_{n_1^{\prime}\ \text{times}},\underbrace{1, \cdots, 1}_{m_2^{\prime}+s_3\ \text{times}},\underbrace{0, \cdots, 0}_{t_3\ \text{times}}\cdots\cdots\underbrace{1, \cdots, 1}_{s_m\ \text{times}})$$
\noindent or
$$(\underbrace{1, \cdots, 1}_{m_1^{\prime}\ \text{times}},\underbrace{0, \cdots, 0}_{n_1^{\prime}\ \text{times}},\underbrace{1, \cdots, 1}_{m_2^{\prime}+s_3\ \text{times}},\underbrace{0, \cdots, 0}_{t_3\ \text{times}}\cdots\cdots\underbrace{1, \cdots, 1}_{s_m\ \text{times}},\underbrace{0, \cdots, 0}_{t_m\ \text{times}})$$
with $m_1^{\prime}+m_2^{\prime}\geq s_1+s_2$ respectively.

By iterating the above process, one finally obtains that $T$ is bundle-mutations to a tilting bundle $T^{\prime}$ such that $$\iota(T^{\prime})=(\underbrace{1, \cdots, 1}_{m_1\ \text{times}},\underbrace{0, \cdots, 0}_{n_1\ \text{times}},\underbrace{1, \cdots, 1}_{m_2\ \text{times}})$$ and $m_1+m_2\geq s_1+s_2+\cdots+s_m$.

Assume $T'=\bigoplus\limits_{i=1}^{p+q}[D^{a_{i}}_{b_{i}}]$ with $0= a_1< a_2\leq \cdots\leq a_{p+q}= 1,\,b_1\leq b_2\leq \cdots\leq b_{p+q}$.
According to (\ref{periodicity of covering map}), we have$$T'=(\bigoplus\limits_{i=1}^{m_{1}+n_{1}}[D^{a_{i}}_{b_{i}}])
\oplus(\bigoplus\limits_{i=m_{1}+n_{1}+1}^{p+q}[D^{a_{i}}_{b_{i}}])
=(\bigoplus\limits_{i=m_{1}+n_{1}+1}^{p+q}[D^{a_{i}-1}_{b_{i}-1}])
\oplus(\bigoplus\limits_{i=1}^{m_{1}+n_{1}}[D^{a_{i}}_{b_{i}}]),$$
which implies that $$\iota(T^{\prime})=(\underbrace{1, \cdots, 1,}_{m_1+m_2\ \text{times}}\underbrace{0, \cdots, 0}_{n_1\ \text{times}}).$$ By the proof of Proposition \ref{the bound of index}, we get $T^{\prime}=T^{a}_{b}$ for some $a\in \frac{\mathbb{Z}}{p},\, b\in \frac{\mathbb{Z}}{q}$.
Combining with Proposition \ref{translation}, we obtain the connectedness of $\mathcal{G}(\mathcal{T}_{\mathbb{X}}^{\nu})$. This finishes the proof.
\end{proof}

\section{Tilting graph}
In this section, we will give more explicit structure for the tilting graph $\mathcal{G}(\mathcal{T}_{\mathbb{X}})$ of ${\rm coh}\mbox{-}\mathbb{X}(p,q)$ and the tilting graph $\mathcal{G}(\mathcal{T}_{\mathbb{X}}^{\nu})$ of ${\rm vec}\mbox{-}\mathbb{X}(p,q)$ respectively by using the geometric model.

\subsection{Local shape of the tilting graphs}

In this subsection, we consider the local shape of the tilting graphs $\mathcal{G}(\mathcal{T}_{\mathbb{X}})$ and $\mathcal{G}(\mathcal{T}_{\mathbb{X}}^{\nu})$ respectively.

For the case $p=q=1$, i.e., for the classical projective line $\mathbb{P}_{\mathbf k}^1$ case, it is well known that any tilting sheaf has the form $T_{n}:=\co(n)\oplus \co(n+1)$ for some $n\in\mathbb{Z}$. Moreover, there exists a tilting mutation between $T_{n}$ and $T_{m}$ if and only if $m=n\pm1$. Therefore, the tilting subgraph $\mathcal{G}(\mathcal{T}_{\mathbb{X}}^{\nu})$ coincides with the tilting graph $\mathcal{G}(\mathcal{T}_{\mathbb{X}})$, which has the form:

\begin{figure}[H]
\begin{tikzpicture}
\node()at(0,0){\tiny$\bullet$};
\node()at(1,0){\tiny$\bullet$};
\node()at(2,0){\tiny$\bullet$};
\node()at(3,0){\tiny$\bullet$};
\node()at(-1,0){\tiny$\bullet$};
\node()at(-2,0){\tiny$\bullet$};
\node()at(-3,0){\tiny$\bullet$};
\draw[-](0,0)--(1,0);
\draw[-](1,0)--(2,0);
\draw[-](2,0)--(3,0);
\draw[-](0,0)--(-1,0);
\draw[-](-1,0)--(-2,0);
\draw[-](-2,0)--(-3,0);
\node()at(0,-0.2){\tiny$T_0$};
\node()at(1,-0.2){\tiny$T_1$};
\node()at(2,-0.2){\tiny$T_2$};
\node()at(3,-0.2){\tiny$T_3$};
\node()at(-1,-0.2){\tiny$T_{-1}$};
\node()at(-2,-0.2){\tiny$T_{-2}$};
\node()at(-3,-0.2){\tiny$T_{-3}$};
\draw[line width=1pt,dotted](3.6,0)--(4.1,0);
\draw[line width=1pt,dotted](-3.6,0)--(-4.1,0);
\end{tikzpicture}
\caption{Tilting graph of ${\rm coh}\mbox{-}\mathbb{P}_{\mathbf k}^1$}
\end{figure}

From now on we focus on the weighted projective line $\mathbb{X}(p,q)$ with $(p,q)\neq (1,1)$.

\begin{prop}\label{pentagon}
Assume $(p,q)\neq (1,1)$. Let $T=\bigoplus\limits_{i=1}^{p+q}T_{i}$ be a tilting sheaf in ${\rm coh}\mbox{-}\mathbb{X}(p,q)$ and $\Gamma$ be the associated triangulation of $A_{p,q}$. Let $\gamma_i$ be the arc in $\Gamma$ corresponding to $T_{i}$ for $1\leq i\leq p+q$. For any $1\leq i\neq j\leq p+q,$ one of the following holds:

\begin{itemize}
  \item[(1)] if two arcs $\gamma_i$ and $\gamma_j$ are in the same triangle, then $\mu_{i}\mu_{j}\mu_{i}(T)=\mu_{i}\mu_{j}(T);$
\item[(2)] if two arcs $\gamma_i$ and $\gamma_j$ are not in the same triangle, then $\mu_{i}\mu_{j}(T)=\mu_{j}\mu_{i}(T).$
  \end{itemize}
\end{prop}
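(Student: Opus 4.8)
The plan is to push both identities through Proposition~\ref{mutation and flip}, which identifies the mutation $\mu_X$ of a tilting sheaf with the flip of the corresponding arc; since the bijection $\phi$ of Theorem~\ref{triangulation and tilting} intertwines mutations and flips, it suffices to prove the two identities for the triangulation $\Gamma$ with arcs $\gamma_1,\dots,\gamma_{p+q}$ corresponding to $T_1,\dots,T_{p+q}$. I index the arcs of every intermediate triangulation compatibly, so that flipping the $m$-th arc leaves the $l$-th arc fixed for $l\neq m$; thus $\mu_i$ always means ``flip the arc currently in slot $i$''. All flips are defined because $A_{p,q}$ has no interior marked points, so its triangulations contain no self-folded triangles (equivalently, by Corollary~\ref{two complements}, every almost complete tilting sheaf has exactly two complements). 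Finally, it is convenient to lift each local picture to the universal cover $\mathbb{U}$, where the relevant triangles, quadrilaterals and pentagons are genuinely embedded; the resulting identities of triangulations then descend along $\pi$.

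Case~(2). Let $\Delta_i^{\pm}$, $\Delta_j^{\pm}$ be the two triangles of $\Gamma$ adjacent to $\gamma_i$, resp.\ $\gamma_j$. If some $\Delta_i^{\varepsilon}$ equalled some $\Delta_j^{\delta}$, that triangle would have both $\gamma_i$ and $\gamma_j$ among its sides, contradicting the hypothesis; hence the four triangles are distinct and have pairwise disjoint interiors. Flipping $\gamma_i$ alters only $\Delta_i^{+}\cup\Delta_i^{-}$ and leaves $\Delta_j^{\pm}$ — hence the quadrilateral across $\gamma_j$ — untouched, and symmetrically. So the two flips commute, giving $\mu_i\mu_j(\Gamma)=\mu_j\mu_i(\Gamma)$ and therefore $\mu_i\mu_j(T)=\mu_j\mu_i(T)$.

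Case~(1). Let $\Delta$ be the common triangle of $\gamma_i$ and $\gamma_j$, with third side $\gamma_k$, and let $\Delta_i$, $\Delta_j$ be the triangles on the far sides of $\gamma_i$, $\gamma_j$. Then $\Delta,\Delta_i,\Delta_j$ are distinct, $\Delta_i\neq\Delta_j$, and the apices of $\Delta_i$ and $\Delta_j$ differ: if they coincided these three triangles would completely surround their common vertex, forcing it to be an interior marked point, which $A_{p,q}$ does not possess. Hence $P:=\Delta\cup\Delta_i\cup\Delta_j$ is a pentagon whose interior meets exactly the arcs $\gamma_i,\gamma_j$ of $\Gamma$, so $\{\gamma_i,\gamma_j\}$ is one of the five diagonal pairs triangulating $P$; since no arc of $\Gamma$ outside $P$ is ever touched and each flip replaces one diagonal of $P$ by another, every flip in $\mu_i\mu_j\mu_i$ or $\mu_i\mu_j$ stays inside $P$, with slots $i,j$ always holding the two diagonals of the current triangulation of $P$.

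It remains to run the pentagon combinatorics. Invoke the classical fact that the five triangulations of a pentagon form a $5$-cycle under flip, each having exactly two neighbours obtained by flipping its two diagonals; consequently, flipping the slot just flipped backtracks, while flipping the other slot advances along the $5$-cycle. Starting at $\Gamma$, the word $\mu_i,\mu_j,\mu_i$ flips slots $i,j,i$ — each different from the immediately preceding one — hence makes three consecutive steps in one direction; the word $\mu_j,\mu_i$ flips slots $j,i$, making two consecutive steps in the opposite direction (opposite since its first flip is at slot $j$, not $i$). As $3\equiv-2\pmod 5$, both words land on the same vertex of the $5$-cycle, i.e.\ the same triangulation of $P$, hence of $A_{p,q}$; applying $\phi$ gives $\mu_i\mu_j\mu_i(T)=\mu_i\mu_j(T)$. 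The only step that genuinely uses the geometry of $A_{p,q}$ is ruling out the degenerate ``three triangles around one vertex'' configuration in case~(1), which I expect to be the main obstacle and which is dispatched exactly by the absence of interior marked points; bookkeeping the slot labels along the flip words and checking that no flip immediately backtracks is routine.
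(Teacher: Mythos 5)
Your case (2) is correct and is exactly the paper's (brief) locality argument, made precise. Your case (1) also follows the paper's route: confine every flip to the region $P=\Delta\cup\Delta_i\cup\Delta_j$ and run the flip combinatorics of a pentagon there; your way of finishing (three non-backtracking steps equal minus two steps on the $5$-cycle of triangulations of a pentagon) is a cleaner bookkeeping than the paper's explicit picture chase. The problem in case (1) is the unproved assertion $\Delta_i\neq\Delta_j$, together with the ensuing claim that $P$ is a genuine pentagon whose interior meets only $\gamma_i,\gamma_j$. Your interior-marked-point argument rules out $\Delta_i$ and $\Delta_j$ sharing their apex across the common vertex of $\gamma_i,\gamma_j$; it does not rule out $\Delta_i=\Delta_j$, which occurs precisely when $\gamma_i$ and $\gamma_j$ are sides of \emph{two} common triangles, i.e.\ when $\gamma_i\cup\gamma_j$ is a closed curve isotopic to the core of the annulus. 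In that configuration the ``pentagon'' is only immersed: two of its boundary edges are further copies of $\gamma_i$ and $\gamma_j$, so the boundary of $P$ does not stay fixed under the flips and the $5$-cycle argument collapses.

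This configuration really occurs for $(p,q)\neq(1,1)$, and there the asserted identity fails, so the gap cannot be closed without an extra hypothesis. Take ${\rm coh}\mbox{-}\mathbb{X}(1,2)$ and $T=\co\oplus\co(\vec{c})\oplus S_{0,0}$ with $T_1=\co$, $T_2=\co(\vec{c})$; the arcs are $\gamma_1=[D^0_0]$, $\gamma_2=[D^1_0]$ and $\phi^{-1}(S_{0,0})=[D_{0,1}]$, and $\gamma_1,\gamma_2$ bound two common triangles (one against the inner boundary, one against $[D_{0,1}]$), so they are ``in the same triangle.'' The two complements of the almost complete tilting sheaf $\co(n\vec{c})\oplus S_{0,0}$ are $\co((n+1)\vec{c})$ and $\co((n-1)\vec{c})$, so alternating $\mu_1$ and $\mu_2$ just walks along the infinite family $\co(n\vec{c})\oplus\co((n+1)\vec{c})\oplus S_{0,0}$, exactly as for $\mathbb{P}^1_{\mathbf k}$: one gets $\mu_1\mu_2\mu_1(T)=\co(4\vec{c})\oplus\co(3\vec{c})\oplus S_{0,0}$ while $\mu_1\mu_2(T)=\co(-2\vec{c})\oplus\co(-\vec{c})\oplus S_{0,0}$, and no pentagon (indeed no finite relation between $\mu_1$ and $\mu_2$) exists at this vertex; these are the infinite horizontal rows visible in the paper's own picture of the tilting graph of ${\rm coh}\mbox{-}\mathbb{X}(1,2)$. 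So case (1) needs the additional hypothesis that $\gamma_i$ and $\gamma_j$ span exactly one common triangle. To be fair, the paper's own proof hides the same assumption inside its ``without loss of generality'' figure, so your argument is no worse than the original on this point --- but the assumption is exactly where the difficulty sits, and it should be isolated and justified (or added as a hypothesis) rather than asserted.
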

\begin{proof} For any arc $\gamma_i$ in $\Gamma,$ there exists a quadrilateral formed by the two triangles of $\Gamma$ containing $\gamma_i$, drawn as follows:
\begin{figure}[H]
\begin{tikzpicture}
\draw[](0,0)--(1,1);
\draw[](0,0)--(1,-1);
\draw[](1,1)--(2,0);
\draw[](1,-1)--(2,0);
\draw[](1,1)--(1,-1);
\node()at(1,1){\tiny$\bullet$};
\node()at(1,-1){\tiny$\bullet$};
\node()at(2,0){\tiny$\bullet$};
\node()at(0,0){\tiny$\bullet$};
\node()at(1.17,0){$\gamma_i$};
\end{tikzpicture}
\end{figure}

(1) If two arcs $\gamma_i$ and $\gamma_j$ are in the same triangle, without loss of generality, we may assume that the position of $\gamma_i$ and $\gamma_j$ is drawn as below:

\begin{figure}[H]
\begin{tikzpicture}
\draw[](0,0)--(1,1);
\draw[](0,0)--(1,-1);
\draw[](1,1)--(2,0);
\draw[](1,-1)--(2,0);
\draw[](1,1)--(1,-1);
\node()at(1,1){\tiny$\bullet$};
\node()at(1,-1){\tiny$\bullet$};
\node()at(2,0){\tiny$\bullet$};
\node()at(0,0){\tiny$\bullet$};
\node()at(1.15,0){$\gamma_i$};
\node()at(1.6,0.4){$\gamma_j$};
\draw[](1,1)--(2.5,0.5);
\draw[](2,0)--(2.5,0.5);
\node()at(2.5,0.5){\tiny$\bullet$};
\end{tikzpicture}
\end{figure}
\noindent By applying the flips $\mu_{i}$ and $\mu_{j}$ on the corresponding arcs, and noting that $(p,q)\neq (1,1)$, we have the following commutative diagram:

\begin{figure}[H]
\begin{tikzpicture}
\draw[](0,0)--(1,1);
\draw[](0,0)--(1,-1);
\draw[](1,1)--(2,0);
\draw[](1,-1)--(2,0);
\draw[](1,1)--(1,-1);
\node()at(1,1){\tiny$\bullet$};
\node()at(1,-1){\tiny$\bullet$};
\node()at(2,0){\tiny$\bullet$};
\node()at(0,0){\tiny$\bullet$};
\node()at(1.1,0){$\gamma_i$};
\node()at(1.42,0.5){$\gamma_j$};
\draw[](1,1)--(2.5,0.5);
\draw[](2,0)--(2.5,0.5);
\node()at(2.5,0.5){\tiny$\bullet$};
\draw[->](3,0)--(3.8,0);
\draw[->](1,-1.5)--(1,-2.3);
\node()at(1.25,-1.9){${\mu_j}$};
\node()at(3.4,0.3){${\mu_i}$};
\draw[](4.5,0)--(5.5,1);
\draw[](4.5,0)--(5.5,-1);
\draw[](5.5,1)--(6.5,0);
\draw[](5.5,-1)--(6.5,0);
\draw[](4.5,0)--(6.5,0);
\node()at(5.5,1){\tiny$\bullet$};
\node()at(5.5,-1){\tiny$\bullet$};
\node()at(6.5,0){\tiny$\bullet$};
\node()at(4.5,0){\tiny$\bullet$};
\node()at(5.5,0.2){$\gamma_i$};
\node()at(5.92,0.5){$\gamma_j$};
\draw[](5.5,1)--(7,0.5);
\draw[](6.5,0)--(7,0.5);
\node()at(7,0.5){\tiny$\bullet$};
\draw[->](7.5,0)--(8.3,0);
\node()at(7.9,0.3){${\mu_j}$};
\draw[](9,0)--(10,1);
\draw[](9,0)--(10,-1);
\draw[](9,0)--(11.5,0.5);
\draw[](10,-1)--(11,0);
\draw[](9,0)--(11,0);
\node()at(10,1){\tiny$\bullet$};
\node()at(10,-1){\tiny$\bullet$};
\node()at(11,0){\tiny$\bullet$};
\node()at(9,0){\tiny$\bullet$};
\node()at(10.1,0.05){$\gamma_i$};
\node()at(10.42,0.5){$\gamma_j$};
\draw[](10,1)--(11.5,0.5);
\draw[](11,0)--(11.5,0.5);
\node()at(11.5,0.5){\tiny$\bullet$};
\draw[->](10,-1.5)--(10,-2.3);
\node()at(10.2,-1.9){$\mu_{i}$};
\draw[](0,-4)--(1,-3);
\draw[](0,-4)--(1,-5);
\draw[](1,-5)--(2,-4);
\draw[](1,-3)--(1,-5);
\node()at(1,-3){\tiny$\bullet$};
\node()at(1,-5){\tiny$\bullet$};
\node()at(2,-4){\tiny$\bullet$};
\node()at(0,-4){\tiny$\bullet$};
\node()at(1.1,-4){$\gamma_i$};
\draw[](1,-3)--(2.5,-3.5);
\draw[](2,-4)--(2.5,-3.5);
\node()at(2.5,-3.5){\tiny$\bullet$};
\draw[->](3,-4)--(8.3,-4);
\node()at(5.15,-3.7){${\mu_i}$};
\draw[](9,-4)--(10,-3);
\draw[](9,-4)--(11.5,-3.5);
\draw[](9,-4)--(10,-5);
\draw[](10,-5)--(11,-4);
\node()at(10,-3){\tiny$\bullet$};
\node()at(10,-5){\tiny$\bullet$};
\node()at(11,-4){\tiny$\bullet$};
\node()at(9,-4){\tiny$\bullet$};
\draw[](10,-3)--(11.5,-3.5);
\draw[](11,-4)--(11.5,-3.5);
\node()at(11.5,-3.5){\tiny$\bullet$};
\node()at(1.7,-4){$\gamma_j$};
\draw[](2.55,-3.5)arc(110:158:2.7);
\draw[](11.55,-3.5)arc(110:158:2.7);
\end{tikzpicture}
\end{figure}
\noindent Therefore $\mu_{i}\mu_{j}\mu_{i}(T)=\mu_{i}\mu_{j}(T).$

\vspace{1mm}

(2) If two arcs $\gamma_i$ and $\gamma_j$ are not in the same triangle, then the mutations $\mu_{T}(T_i)$ and $\mu_{T}(T_j)$ do not affect each other. Therefore, we have $\mu_{i}\mu_{j}(T)=\mu_{j}\mu_{i}(T).$
\end{proof}

As an immediate consequence of the above proposition, we obtain the following result, c.f. \cite[Theorem 3.10]{FST2008}).

\begin{cor}
Assume $(p,q)\neq (1,1)$, then the tilting graph $\mathcal{G}(\mathcal{T}_{\mathbb{X}})$ is composed of quadrilaterals and pentagons.
\end{cor}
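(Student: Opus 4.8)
The plan is to derive the corollary from Proposition~\ref{pentagon} by a homotopy argument on the tilting graph. First I would record the two structural facts that make the argument go: by Corollary~\ref{two complements} every almost complete tilting sheaf has exactly two complements, so $\mathcal{G}(\mathcal{T}_{\mathbb{X}})$ is $(p+q)$-regular; and by the flip-connectedness of triangulations of $A_{p,q}$ together with Theorem~\ref{triangulation and tilting} (or, on the bundle part, Proposition~\ref{any tilting bundle transformed into unique maximal}), $\mathcal{G}(\mathcal{T}_{\mathbb{X}})$ is connected. The assertion to be proved is then that the cycle space of this graph is generated by the boundaries of quadrilaterals and pentagons; equivalently, attaching a $2$-cell along each such cycle yields a simply connected complex.

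Next I would exhibit the quadrilaterals and pentagons explicitly. Fix a tilting sheaf $T=\bigoplus_{i=1}^{p+q}T_i$ with associated triangulation $\Gamma$ and arcs $\gamma_1,\dots,\gamma_{p+q}$, and pick $i\neq j$. If $\gamma_i$ and $\gamma_j$ are not in a common triangle of $\Gamma$, then Proposition~\ref{pentagon}(2) gives $\mu_i\mu_j(T)=\mu_j\mu_i(T)$, so the closed walk $T\to\mu_i(T)\to\mu_j\mu_i(T)=\mu_i\mu_j(T)\to\mu_j(T)\to T$ is a quadrilateral. If $\gamma_i$ and $\gamma_j$ share a triangle, then --- since $(p,q)\neq(1,1)$ guarantees that this is a genuine triangle with three distinct sides, so the associated closed walk does not degenerate to a shorter one --- Proposition~\ref{pentagon}(1) gives $\mu_i\mu_j\mu_i(T)=\mu_i\mu_j(T)$, and the resulting closed walk through $T$ has length five. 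Thus every pair of edges meeting at a vertex of $\mathcal{G}(\mathcal{T}_{\mathbb{X}})$ lies on one such quadrilateral or pentagon, and these are the only short cycles produced.

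Finally I would argue that these faces generate all cycles. Given any closed walk in $\mathcal{G}(\mathcal{T}_{\mathbb{X}})$, encode it as a word in the mutation directions; delete immediate backtracks, and then repeatedly apply the two relations above to any length-two subword $\mu_i\mu_j$ with $i\neq j$ (the square relation transposes $\mu_i$ and $\mu_j$ when the arcs are unrelated; the pentagon relation reshuffles the word when they share a triangle), each application changing the walk by a quadrilateral or a pentagon. This is precisely the combinatorial normalization used for flip graphs of surfaces, so one may either carry it out using the explicit parametrization of tilting sheaves by triangulations (Theorem~\ref{triangulation and tilting}) or invoke \cite[Theorem~3.10]{FST2008} via Proposition~\ref{mutation and flip}. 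The only genuinely delicate point --- and the step I expect to be the main obstacle --- is the termination of this rewriting: the pentagon relation can momentarily lengthen the word, so naive word length is not a monovariant, and one must instead track a finer invariant (for instance a count of inversions read off from the triangulation) that provably decreases; controlling the pentagon case is where the bulk of the careful work lies. If one is content with the weaker reading of the statement --- that every face of $\mathcal{G}(\mathcal{T}_{\mathbb{X}})$ is a quadrilateral or a pentagon --- then the second paragraph already suffices, and this last paragraph is the upgrade to the simply connected formulation.
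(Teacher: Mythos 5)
Your second paragraph is exactly the paper's argument: the corollary is stated there as an immediate consequence of Proposition \ref{pentagon}, read locally --- for each pair $i\neq j$, the commutation relation $\mu_i\mu_j(T)=\mu_j\mu_i(T)$ closes the two edges at $T$ into a quadrilateral when $\gamma_i,\gamma_j$ share no triangle, and the relation $\mu_i\mu_j\mu_i(T)=\mu_i\mu_j(T)$ closes them into a pentagon $T,\mu_i(T),\mu_j\mu_i(T),\mu_i\mu_j(T),\mu_j(T)$ when they do. Your observation that $(p,q)\neq(1,1)$ is what prevents the pentagon from degenerating is also the role that hypothesis plays in the paper (in the $(1,1)$ case the graph is a line and the five-cycle collapses). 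Where you diverge is the third paragraph: you try to upgrade ``composed of quadrilaterals and pentagons'' to the assertion that these cycles generate the cycle space (equivalently, that the resulting $2$-complex is simply connected). The paper neither claims nor proves this stronger statement, and your argument for it is, as you yourself flag, incomplete --- the pentagon relation can lengthen the word, so you have no terminating rewriting procedure, and the appeal to \cite[Theorem 3.10]{FST2008} is only a pointer, not a proof. Relative to the paper's intended (local) reading, your proposal is complete after the second paragraph; the third paragraph should either be dropped or clearly marked as a conjectural strengthening, since in its current form it asserts more than it establishes.
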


By considering the tilting subgraph $\mathcal{G}(\mathcal{T}^{\nu}_{\mathbb{X}})$ of tilting bundles, we have the following result.
\begin{prop}
\begin{itemize}
  \item[(1)] The tilting graph $\mathcal{G}(\mathcal{T}^{\nu}_{\mathbb{X}})$ of ${\rm vec}\mbox{-}\mathbb{X}(1,q)$ is a line.
\item[(2)] For $2\leq p\leq q$, the tilting graph $\mathcal{G}(\mathcal{T}^{\nu}_{\mathbb{X}})$ of ${\rm vec}\mbox{-}\mathbb{X}(p,q)$ is composed of quadrilaterals.
  \end{itemize}
\end{prop}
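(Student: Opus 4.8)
I would treat the two bullets separately, in each case reading the shape of the graph off invariants already in hand. For the first bullet ($p=1$), the plan is to show $\mathcal{G}(\mathcal{T}^{\nu}_{\mathbb{X}})$ is a connected, simple, $2$-regular graph on infinitely many vertices, hence a doubly infinite path, i.e.\ a line. By Theorem~\ref{bijection between tilting bundle and combination vertices} its vertices are in bijection with $\Lambda^{0}_{(1,q)}=\{c_{1}\in\mathbb{Z}\mid c_{1}\le c_{1}+q\}=\mathbb{Z}$, so there are infinitely many of them; and since $p=1$ the set $J(T)$ is empty for every tilting bundle $T$, so $r(T)=0$ and Proposition~\ref{the bound of index} forces $n(T)=2(p-r(T))=2$, i.e.\ each vertex has exactly two incident bundle-mutations. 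There are no loops ($\mu_{i}(T)\neq T$), and no double edges: by Proposition~\ref{translation} the two mutations of $T^{a}_{b}$ are $T^{a}_{b-1/q}$ and $T^{a}_{b+1/q}$, which are distinct; since $\{T^{a}_{b}\}$ is closed under bundle-mutation while the graph is connected by Proposition~\ref{any tilting bundle transformed into unique maximal}, every tilting bundle is of this form and the mutation structure is the single infinite chain $\cdots - T^{a}_{b-1/q} - T^{a}_{b} - T^{a}_{b+1/q} - \cdots$.

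For the second bullet ($2\le p\le q$), the plan is to localise at an arbitrary tilting bundle $T$ with triangulation $\Gamma$ (Theorem~\ref{triangulation and tilting}) and show that the only minimal cycles of $\mathcal{G}(\mathcal{T}^{\nu}_{\mathbb{X}})$ are quadrilaterals. Let $\gamma_{i}$ be the arc of $\Gamma$ corresponding to the summand $T_{i}$, and take $i\neq j$ with both $\mu_{i}(T)$ and $\mu_{j}(T)$ tilting bundles; Proposition~\ref{pentagon} leaves two possibilities. If $\gamma_{i}$ and $\gamma_{j}$ lie in distinct triangles of $\Gamma$, then $\mu_{i}\mu_{j}(T)=\mu_{j}\mu_{i}(T)$; moreover each of the two flips yields a positive bridging arc by (the proof of) Proposition~\ref{position}, and the flip at $\gamma_{j}$ does not disturb the quadrilateral around $\gamma_{i}$, so $\mu_{i}\mu_{j}(T)$ is again a tilting bundle and $T,\mu_{i}(T),\mu_{i}\mu_{j}(T),\mu_{j}(T)$ is a genuine $4$-cycle. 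If instead $\gamma_{i}$ and $\gamma_{j}$ lie in a common triangle, then --- using that the bundle-mutable summands of $T$ are precisely the two lateral sides of the $p$-triangles, which are consecutive in the normal-form order of $T$ (cf.\ the proof of Proposition~\ref{the bound of index}) --- one has $\{i,j\}=\{\ell-1,\ell\}$ for some $\ell$, so Proposition~\ref{effect of tilting bundle-mutation} gives that $\mu_{j}\mu_{i}(T)$ and $\mu_{i}\mu_{j}(T)$ are \emph{not} tilting bundles; hence the pentagon of Proposition~\ref{pentagon}, namely $T,\mu_{i}(T),\mu_{j}\mu_{i}(T),\mu_{i}\mu_{j}\mu_{i}(T)=\mu_{i}\mu_{j}(T),\mu_{j}(T)$, meets $\mathcal{G}(\mathcal{T}^{\nu}_{\mathbb{X}})$ only in the path $\mu_{i}(T)-T-\mu_{j}(T)$ and contributes no cycle. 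Since there are no $3$-cycles (a triangle would identify two of three distinct tilting bundles) and the graph is connected (Proposition~\ref{any tilting bundle transformed into unique maximal}), every minimal cycle arises from the first case and is a quadrilateral, which is exactly the statement that $\mathcal{G}(\mathcal{T}^{\nu}_{\mathbb{X}})$ is composed of quadrilaterals.

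The step I expect to be the main obstacle is the claim used in the second case of the second bullet --- that arcs sharing a triangle of $\Gamma$ correspond to consecutive summands --- together with the bookkeeping at the boundary of the polytope $\Lambda^{0}_{(p,q)}$, where the local configurations degenerate (for instance when $c_{p}=c_{1}+q$, the wrap-around lateral side of the last $p$-triangle coincides with the first lateral side and flips to a peripheral rather than a bridging arc). I would settle this by returning to the parallelogram of Figure~\ref{p-triangles} and recording, for every bundle-mutable summand, exactly which coordinate $c_{k}$ of $\varphi(T)$ changes under the mutation --- increasing by $1$ for the right lateral side of the $k$-th $p$-triangle and decreasing by $1$ for the left one --- and then verifying directly that two such mutations commute as tilting-bundle mutations precisely when the affected indices are non-adjacent in the cyclic order, which matches the dichotomy of Proposition~\ref{pentagon}. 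With this in hand the quadrilaterals of $\mathcal{G}(\mathcal{T}^{\nu}_{\mathbb{X}})$ become the unit lattice squares of the convex region $\Lambda^{0}_{(p,q)}$, and the degenerate case $\Lambda^{0}_{(1,q)}=\mathbb{Z}$ of this picture recovers the first bullet.
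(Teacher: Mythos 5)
Your proposal is correct and follows essentially the same route as the paper: part (1) via $n(T)=2$ from Proposition \ref{the bound of index} together with Proposition \ref{translation}, and part (2) via the dichotomy of Proposition \ref{pentagon} combined with Propositions \ref{position} and \ref{effect of tilting bundle-mutation}. You supply a few details the paper leaves implicit --- that the square $\mu_i\mu_j(T)=\mu_j\mu_i(T)$ actually consists of tilting bundles when $\gamma_i,\gamma_j$ lie in distinct triangles, and that the same-triangle (pentagon) configuration contributes no cycle to the subgraph --- but the underlying argument is the same.
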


\begin{proof}
(1) If $\bbX$ has weight type $(1,q)$, then by Theorem \ref{triangulation and tilting} $T=T^0_b$ for some $b\in \frac{\mathbb{Z}}{q}$, and the bundle-mutation index $n(T)=2$ by Proposition \ref{the bound of index}. Combining with Proposition \ref{translation}, we obtain that the tilting graph $\mathcal{G}(\mathcal{T}^{\nu}_{\mathbb{X}})$ of ${\rm vec}\mbox{-}\mathbb{X}(1,q)$ is a line.

(2) Assume $\bbX$ has weight type $(p,q)$ with $2\leq p\leq q$. Let $T=\bigoplus\limits_{i=1}^{p+q}T_{i}$ be a tilting bundle. Assume $\mu_{i}(T)$ and $\mu_{j}(T)$ are both tilting bundles for some $i,j$, let $\gamma_i$ and $\gamma_j$ be the corresponding arcs of $T_i$ and $T_j$ respectively. If $\mu_{j}\mu_{i}(T)$ is also a tilting bundle, then $\gamma_i$ and $\gamma_j$ are not in the same triangle by Proposition \ref{effect of tilting bundle-mutation}. It follows that $\mu_{i}\mu_{j}(T)=\mu_{j}\mu_{i}(T)$ by Proposition \ref{pentagon} $(2)$. Hence, the tilting graph $\mathcal{G}(\mathcal{T}^{\nu}_{\mathbb{X}})$ of vector bundle category is composed of quadrilaterals.
\end{proof}
\subsection{Proof of Theorem \ref{description of tilting graph of vector bundles}}
Recall that $\Lambda_{(p,q)}$ is the graph with vertex set
$$\Lambda_{(p,q)}^0=\{(c_1, \cdots, c_p)\in\mathbb{Z}^{p}|c_1\leq \cdots\leq c_p\leq c_1+q\},$$ and there exists an edge between two vertices $(c_1, \cdots, c_p)$ and $(d_1, \cdots, d_p)$ if and only if $\sum_{i=1}^{p}|c_i-d_i|=1$, or equivalently, there exists a unique $j$ such that
$$d_i=
  \left\{
  \begin{array}{lll}
    c_i\pm 1&& i=j;\\
   c_i&&  i\neq j.
  \end{array}
\right.$$

By Theorem \ref{bijection between tilting bundle and combination vertices}, the vertices of $\mathcal{G}(\mathcal{T}^{\nu}_{\mathbb{X}})$ and $\Lambda_{(p,q)}^0$ coincide. Let $T$ be a tilting bundle, recall from (\ref{the map phi}) that we can assume $\varphi(T)=(c_1, \cdots, c_{i-1}, c_i, c_{i+1}, \cdots, c_p),$ and we define
$$\mu_{i}^{+}(T):=(T\backslash [D^{\frac{i}{p}}_{\frac{c_i}{q}}])\oplus [D^{\frac{i-1}{p}}_{\frac{c_i+1}{q}}]$$ and $$\mu_{i}^{-}(T):=(T\backslash [D^{\frac{i-1}{p}}_{\frac{c_i}{q}}])\oplus [D^{\frac{i}{p}}_{\frac{c_i-1}{q}}].$$
Then $\mu_{i}^{+}(T)$ and $\mu_{i}^{-}(T)$ are both tilting bundles with
$$\varphi(\mu_{i}^{\pm}(T))=(c_1, \cdots, c_{i-1}, c_i\pm 1, c_{i+1}, \cdots, c_p)=\varphi(T)\pm\epsilon_{i},$$
here, $\epsilon_{i}=(0,\cdots,0,1,0,\cdots,0)$ is $i$-th canonical row vector in $\mathbb{Z}^{p}.$ Therefore, if there exists an edge in the tilting graph $\mathcal{G}(\mathcal{T}_{\mathbb{X}}^{\nu})$,
then there exists a corresponding edge in $\Lambda_{(p,q)}.$
Moreover, according to Proposition \ref{the bound of index}, there are $n(T)=2(p-r(T))$-many edges attached to the tilting bundle $T$ in the graph $\mathcal{G}(\mathcal{T}_{\mathbb{X}}^{\nu}).$

On the other hand, there is an edge between $\alpha=(c_1, c_2, \cdots,c_p)$ and $\beta=(d_1, d_2, \cdots, d_p)$ if and only if there exists a unique $j$ such that

$$d_i=
  \left\{
  \begin{array}{lll}
    c_i\pm 1&& i=j\\
   c_i&&  i\neq j;
  \end{array}
\right.$$
if and only if $\beta=\alpha\pm\epsilon_i$.
Observe that
$\beta=\alpha+\epsilon_i\in \Lambda_{(p,q)}^0$ if and only if $$c_1\leq c_2\leq\cdots\leq c_i\textless c_{i+1}\leq c_{i+2}\leq \cdots\leq c_p,$$ and $\beta=\alpha-\epsilon_i\in \Lambda_{(p,q)}^0$ if and only if $$c_1\leq c_2\leq \cdots\leq c_{i-1}\textless c_{i}\leq c_{i+1}\cdots\leq c_{p}.$$
Therefore, $c_{i}=c_{i+1}$ if and only if $\alpha+\epsilon _i\notin \Lambda_{(p,q)}^0$, $\alpha-\epsilon _{i+1}\notin \Lambda_{(p,q)}^0$. Assume $T=\varphi^{-1}(\alpha),$ then there are $2(p-r(T))=n(T)$-many edges attached to $\alpha$ in $\Lambda_{(p,q)}$. This finishes the proof.

\subsection{Typical examples of tilting graphs}
\begin{exm}
For the case of $p=1$ and $q=2$, recall that $$T^{0}_{\frac{a}{2}}:=\co(-a\vec{x}_2)\oplus \co(-a\vec{x}_2+\vec{x}_2)\oplus\co(-a\vec{x}_2+\vec{c}),\,\,a\in\mathbb{Z}$$ is a tilting bundle. We denote by $$T_{\frac{a}{2}}=\co(-a\vec{x}_2)\oplus\co(-a\vec{x}_2+\vec{c})\oplus S_{0,1},$$ hence $T_{\frac{a}{2}}$ is a tilting sheaf but not a tilting bundle.

By using Proposition \ref{pentagon}, the tilting graph $\mathcal{G}(\mathcal{T}_{\mathbb{X}})$ of ${\rm coh}\mbox{-}\mathbb{X}(1,2)$ can be drawn as below (see also \cite{W2008} Figure 8.1), where the second line is the tilting graph $\mathcal{G}(\mathcal{T}_{\mathbb{X}}^{\nu})$ of ${\rm vec}\mbox{-}\mathbb{X}(1,2)$.
\begin{figure}[H]
\begin{tikzpicture}
\draw[-](0.2,0)--(1.8,0);
\node()at(-0.1,0){\tiny$T_{-\frac{1}{2}}$};
\draw[-](-0.4,0.01)--(-2.4,0.01);
\node()at(2,-0.05){\tiny$T_{\frac{1}{2}}$};
\draw[-,red](0.2,-1)--(0.8,-1);
\draw[-,red](1.2,-1)--(1.8,-1);
\node()at(2,-1.1){{\red \tiny$T^{0}_{\frac{1}{2}}$}};
\draw[-](0,-0.2)--(0,-0.8);
\node()at(0,-1.1){{\red \tiny$T^{0}_{-\frac{1}{2}}$}};
\draw[-](2,-0.8)--(2,-0.8);
\draw[-](2.2,0)--(3.8,0);
\node()at(4,0){\tiny$T_{\frac{3}{2}}$};
\draw[-](4.2,0)--(6.2,0);
\draw[-](4,-0.2)--(4,-0.8);
\draw[-](2,-0.2)--(2,-0.8);
\node()at(4,-1.1){{\red \tiny$T^{0}_{\frac{3}{2}}$}};
\draw[-,red](2.2,-1)--(2.8,-1);
\node()at(3,-1){{\red \tiny$T_{1}^{0}$}};
\draw[-,red](3.2,-1)--(3.8,-1);
\draw[-,red](-0.9,-1)--(-0.3,-1);
\node()at(-1.15,-1){{\red \tiny$T^{0}_{-1}$}};
\draw[-,red](-1.4,-1)--(-2,-1);
\node()at(-1.1,-2){\tiny$T_{-1}$};
\draw[-](-1.4,-2)--(-3.3,-2);
\node()at(1,-2){\tiny$T_{0}$};
\node()at(3,-2){\tiny$T_{1}$};
\node()at(5,-2){\tiny$T_2$};
\draw[-](5.3,-2)--(7.3,-2);
\node()at(5,-1){{\red \tiny$T^{0}_{2}$}};
\draw[-,red](5.2,-1)--(5.8,-1);
\draw[-](1,-1.2)--(1,-1.8);
\draw[-](1.2,-2)--(2.8,-2);
\draw[-](-0.8,-2)--(0.8,-2);
\draw[-](-1,-1.2)--(-1,-1.8);
\draw[-](3,-1.2)--(3,-1.8);
\draw[-](3.2,-2)--(4.8,-2);
\draw[-,red](4.2,-1)--(4.8,-1);
\draw[-](5,-1.2)--(5,-1.8);
\node()at(1,-1){{\red \tiny$T^{0}_{0}$}};
\draw[line width=1pt,dotted](6.1,-1)--(6.6,-1);
\draw[line width=1pt,dotted](-2.3,-1)--(-2.8,-1);
\draw[line width=1pt,dotted](6.5,0)--(7,0);
\draw[line width=1pt,dotted](-2.7,0)--(-3.2,0);
\draw[line width=1pt,dotted](7.5,-2)--(8,-2);
\draw[line width=1pt,dotted](-3.6,-2)--(-4.1,-2);
\end{tikzpicture}
\caption{The tilting graph $\mathcal{G}(\mathcal{T}_{\mathbb{X}})$ of ${\rm coh}\mbox{-}\mathbb{X}(1,2)$}
\end{figure}
\end{exm}

\begin{exm}
For the case of $p=2$ and $q=2,$ by Theorem \ref{description of tilting graph of vector bundles}, we obtain that the tilting graph $\mathcal{G}(\mathcal{T}_{\mathbb{X}}^{\nu})$ of ${\rm vec}\mbox{-}\mathbb{X}(2,2)$ can be described by the graph $\Lambda_{(2,2)}$, it follows that the shape of $\mathcal{G}(\mathcal{T}_{\mathbb{X}}^{\nu})$ can be drawn as below:
\begin{figure}[H]
\begin{tikzpicture}
\node()at(-2,-2){\tiny$\bullet$};
\node()at(-2,-1){\tiny$\bullet$};
\node()at(-2.5,-1.2){\tiny$(-2,-1)$};
\node()at(-2,0){\tiny$\bullet$};
\node()at(-2.6,0){\tiny$(-2,0)$};
\node()at(-3,-1){\tiny$\bullet$};
\node()at(-3.7,-1){\tiny$(-3,-1)$};
\node()at(-1,-1){\tiny$\bullet$};
\node()at(-0.3,-1){\tiny$(-1,-1)$};
\node()at(-1.3,-2){\tiny$(-2,-2)$};
\node()at(-1,0){\tiny$\bullet$};
\node()at(-0.5,0.2){\tiny$(-1,0)$};
\node()at(-1,1){\tiny$\bullet$};
\node()at(-1.6,1){\tiny$(-1,1)$};
\node()at(0,0){\tiny$\bullet$};
\node()at(0.5,0){\tiny$(0,0)$};
\node()at(0,1){\tiny$\bullet$};
\node()at(0.4,1.2){\tiny$(0,1)$};
\node()at(0,2){\tiny$\bullet$};
\node()at(-.5,2){\tiny$(0,2)$};
\node()at(1,1){\tiny$\bullet$};
\node()at(1.5,1){\tiny$(1,1)$};
\node()at(2.5,2){\tiny$(2,2)$};
\node()at(1,2){\tiny$\bullet$};
\node()at(1.4,2.2){\tiny$(1,2)$};
\node()at(1,3){\tiny$\bullet$};
\node()at(0.5,3){\tiny$(1,3)$};
\node()at(2,2){\tiny$\bullet$};
\draw[-](-2,-2)--(-2,-1);
\draw[-](-3,-1)--(-2,-1);
\draw[-](-2,-1)--(-2,0);
\draw[-](-2,0)--(-1,0);
\draw[-](-1,0)--(0,0);
\draw[-](-1,0)--(-1,1);
\draw[-](0,0)--(0,1);
\draw[-](-1,1)--(0,1);
\draw[-](0,1)--(0,2);
\draw[-](0,1)--(1,1);
\draw[-](1,1)--(1,2);
\draw[-](1,2)--(1,3);
\draw[-](1,2)--(2,2);
\draw[-](-1,-1)--(-2,-1);
\draw[-](-1,-1)--(-1,0);
\draw[-](0,2)--(1,2);

\draw[-,red](-3,-3)--(3,3);
\draw[-,red](2,4)--(-4,-2);
\draw[line width=1pt,dotted](2,3)--(2.4,3.4);
\draw[line width=1pt,dotted](-3,-2)--(-3.4,-2.4);
\end{tikzpicture}
\caption{The tilting graph $\mathcal{G}(\mathcal{T}_{\mathbb{X}}^{\nu})$ of ${\rm vec}\mbox{-}\mathbb{X}(2,2)$}
\end{figure}
Combining with Proposition \ref{pentagon}, the tilting graph $\mathcal{G}(\mathcal{T}_{\mathbb{X}})$ of ${\rm coh}\mbox{-}\mathbb{X}(2,2)$ can be drawn as below (see also \cite{W2008} Figure 7.2):
{\begin{figure}[h]
\begin{tikzpicture}
\draw[-](0,0)--(0,-0.6);
\draw[-](0.3,-0.2)--(0,-0.6);
\draw[-](0,0)--(0.3,0.4);
\draw[-](0.3,0.4)--(0.3,-0.2);
\node at (0,0) {{\tiny{$\bullet$}}};
\node at (0,-0.6) {{\tiny{$\bullet$}}};
\node at (0.3,0.4) {{\tiny{$\bullet$}}};
\node at (0.3,-0.2) {{\tiny{$\bullet$}}};
\draw[-](0.6,-0.5)--(0.6,-1.1);
\draw[-](0.9,-0.7)--(0.6,-1.1);
\draw[-](0.9,-0.7)--(0.9,-0.1);
\draw[-](0.6,-0.5)--(0.9,-0.1);
\node at (0.6,-0.5) {{\tiny{$\bullet$}}};
\node at (0.6,-1.1) {{\tiny{$\bullet$}}};
\node at (0.9,-0.7) {{\tiny{$\bullet$}}};
\node at (0.9,-0.1) {{\tiny{$\bullet$}}};
\draw[-](1.8,-1.5)--(1.8,-2.1);
\draw[-](2.1,-1.7)--(1.8,-2.1);
\draw[-](2.1,-1.7)--(2.1,-1.1);
\draw[-](1.8,-1.5)--(2.1,-1.1);
\node at (1.8,-1.5) {{\tiny{$\bullet$}}};
\node at (1.8,-2.1) {{\tiny{$\bullet$}}};
\node at (2.1,-1.7) {{\tiny{$\bullet$}}};
\node at (2.1,-1.1) {{\tiny{$\bullet$}}};
\draw[-](-1.2,1)--(-1.2,0.4);
\draw[-](-0.9,0.8)--(-1.2,0.4);
\draw[-](-1.2,1)--(-0.9,1.4);
\draw[-](-0.9,1.4)--(-0.9,0.8);
\node at (-1.2,1) {{\tiny{$\bullet$}}};
\node at (-1.2,0.4) {{\tiny{$\bullet$}}};
\node at (-0.9,1.4) {{\tiny{$\bullet$}}};
\node at (-0.9,0.8) {{\tiny{$\bullet$}}};
\draw[-](0,-0.6)--(1.8,-1.5);
\draw[-](3.5,-0.6)--(1.8,-1.5);
\draw[-](0,0)--(3.5,0);
\draw[-](0.3,0.4)--(2.3,1);
\draw[-](3.8,0.4)--(2.3,1);
\node at (1.4,-0.3) {{\tiny{$\bullet$}}};
\draw[-](0.3,-0.2)--(1.4,-0.3);
\draw[-](2.3,0.4)--(1.4,-0.3);
\draw[-](2.1,-1.1)--(1.4,-0.3);
\node at (2.9,-0.4) {{\tiny{$\bullet$}}};
\node at (-0.6,-0.4) {{\tiny{$\bullet$}}};
\draw[-](2.3,0.4)--(2.9,-0.4);
\draw[-](2.1,-1.1)--(2.9,-0.4);
\draw[-](3.8,-0.2)--(2.9,-0.4);
\draw[-](3.5,0)--(3.5,-0.6);
\draw[-](3.8,-0.2)--(3.5,-0.6);
\draw[-](3.5,0)--(3.8,0.4);
\draw[-](3.8,0.4)--(3.8,-0.2);
\node at (3.5,0) {{\tiny{$\bullet$}}};
\node at (3.5,-0.6) {{\tiny{$\bullet$}}};
\node at (3.8,0.4) {{\tiny{$\bullet$}}};
\node at (3.8,-0.2) {{\tiny{$\bullet$}}};
\draw[-](4.1,-0.5)--(4.1,-1.1);
\draw[-](4.4,-0.7)--(4.1,-1.1);
\draw[-](4.4,-0.7)--(4.4,-0.1);
\draw[-](4.1,-0.5)--(4.4,-0.1);
\node at (4.1,-0.5) {{\tiny{$\bullet$}}};
\node at (4.1,-1.1) {{\tiny{$\bullet$}}};
\node at (4.4,-0.7) {{\tiny{$\bullet$}}};
\node at (4.4,-0.1) {{\tiny{$\bullet$}}};
\draw[-](5.3,-1.5)--(5.3,-2.1);
\draw[-](5.6,-1.7)--(5.3,-2.1);
\draw[-](5.6,-1.7)--(5.6,-1.1);
\draw[-](5.3,-1.5)--(5.6,-1.1);
\node at (5.3,-1.5) {{\tiny{$\bullet$}}};
\node at (5.3,-2.1) {{\tiny{$\bullet$}}};
\node at (5.6,-1.7) {{\tiny{$\bullet$}}};
\node at (5.6,-1.1) {{\tiny{$\bullet$}}};
\draw[-](2.3,1)--(2.3,0.4);
\draw[-](2.6,0.8)--(2.3,0.4);
\draw[-](2.3,1)--(2.6,1.4);
\draw[-](2.6,1.4)--(2.6,0.8);
\node at (2.3,1) {{\tiny{$\bullet$}}};
\node at (2.3,0.4) {{\tiny{$\bullet$}}};
\node at (2.6,1.4) {{\tiny{$\bullet$}}};
\node at (2.6,0.8) {{\tiny{$\bullet$}}};
\draw[-](2.6,1.4)--(-0.9,1.4);
\draw[-](-1.2,1)--(0.3,0.4);
\draw[-](-1.2,0.4)--(-0.6,-0.4);
\draw[-](-0.6,-0.4)--(0.3,-0.2);
\draw[-](-0.9,0.8)--(0.9,-0.1);
\draw[-](2.6,0.8)--(0.9,-0.1);
\draw[-](-0.6,-0.4)--(0.6,-0.5);
\draw[-](0.6,-0.5)--(1.4,-0.3);
\draw[-](2.1,-1.7)--(0.6,-1.1);
\draw[-](4.4,-0.7)--(0.9,-0.7);
\draw[-](4.1,-1.1)--(2.1,-1.7);
\draw[-](5.3,-2.1)--(1.8,-2.1);
\draw[-](5.6,-1.7)--(4.1,-1.1);
\draw[-](5.3,-1.5)--(3.5,-0.6);
\draw[-](4.1,-0.5)--(2.9,-0.4);
\draw[-](4.4,-0.1)--(2.6,0.8);
\draw[-](3.5,0)--(3.5,-0.6);
\draw[-](3.8,-0.2)--(3.5,-0.6);
\draw[-](3.5,0)--(3.8,0.4);
\draw[-](3.8,0.4)--(3.8,-0.2);
\node at (3.5,0) {{\tiny{$\bullet$}}};
\node at (3.5,-0.6) {{\tiny{$\bullet$}}};
\node at (3.8,0.4) {{\tiny{$\bullet$}}};
\node at (3.8,-0.2) {{\tiny{$\bullet$}}};

\draw[-](4.1,-0.5)--(4.1,-1.1);
\draw[-](4.4,-0.7)--(4.1,-1.1);
\draw[-](4.4,-0.7)--(4.4,-0.1);
\draw[-](4.1,-0.5)--(4.4,-0.1);
\node at (4.1,-0.5) {{\tiny{$\bullet$}}};
\node at (4.1,-1.1) {{\tiny{$\bullet$}}};
\node at (4.4,-0.7) {{\tiny{$\bullet$}}};
\node at (4.4,-0.1) {{\tiny{$\bullet$}}};
\draw[-](5.3,-1.5)--(5.3,-2.1);
\draw[-](5.6,-1.7)--(5.3,-2.1);
\draw[-](5.6,-1.7)--(5.6,-1.1);
\draw[-](5.3,-1.5)--(5.6,-1.1);
\node at (5.3,-1.5) {{\tiny{$\bullet$}}};
\node at (5.3,-2.1) {{\tiny{$\bullet$}}};
\node at (5.6,-1.7) {{\tiny{$\bullet$}}};
\node at (5.6,-1.1) {{\tiny{$\bullet$}}};
\draw[-](2.3,1)--(2.3,0.4);
\draw[-](2.6,0.8)--(2.3,0.4);
\draw[-](2.3,1)--(2.6,1.4);
\draw[-](2.6,1.4)--(2.6,0.8);
\node at (2.3,1) {{\tiny{$\bullet$}}};
\node at (2.3,0.4) {{\tiny{$\bullet$}}};
\node at (2.6,1.4) {{\tiny{$\bullet$}}};
\node at (2.6,0.8) {{\tiny{$\bullet$}}};
\draw[-](3.5,-0.6)--(5.3,-1.5);
\draw[-](7,-0.6)--(5.3,-1.5);
\draw[-](3.5,0)--(7,0);
\draw[-](3.8,0.4)--(5.8,1);
\draw[-](7.3,0.4)--(5.8,1);
\node at (4.9,-0.3) {{\tiny{$\bullet$}}};
\draw[-](3.8,-0.2)--(4.9,-0.3);
\draw[-](5.8,0.4)--(4.9,-0.3);
\draw[-](5.6,-1.1)--(4.9,-0.3);
\node at (6.4,-0.4) {{\tiny{$\bullet$}}};
\node at (2.9,-0.4) {{\tiny{$\bullet$}}};
\draw[-](5.8,0.4)--(6.4,-0.4);
\draw[-](5.6,-1.1)--(6.4,-0.4);
\draw[-](7.3,-0.2)--(6.4,-0.4);
\draw[-](7,0)--(7,-0.6);
\draw[-](7.3,-0.2)--(7,-0.6);
\draw[-](7,0)--(7.3,0.4);
\draw[-](7.3,0.4)--(7.3,-0.2);
\node at (7,0) {{\tiny{$\bullet$}}};
\node at (7,-0.6) {{\tiny{$\bullet$}}};
\node at (7.3,0.4) {{\tiny{$\bullet$}}};
\node at (7.3,-0.2) {{\tiny{$\bullet$}}};
\draw[-](7.6,-0.5)--(7.6,-1.1);
\draw[-](7.9,-0.7)--(7.6,-1.1);
\draw[-](7.9,-0.7)--(7.9,-0.1);
\draw[-](7.6,-0.5)--(7.9,-0.1);
\node at (7.6,-0.5) {{\tiny{$\bullet$}}};
\node at (7.6,-1.1) {{\tiny{$\bullet$}}};
\node at (7.9,-0.7) {{\tiny{$\bullet$}}};
\node at (7.9,-0.1) {{\tiny{$\bullet$}}};
\draw[-](8.8,-1.5)--(8.8,-2.1);
\draw[-](9.1,-1.7)--(8.8,-2.1);
\draw[-](9.1,-1.7)--(9.1,-1.1);
\draw[-](8.8,-1.5)--(9.1,-1.1);
\node at (8.8,-1.5) {{\tiny{$\bullet$}}};
\node at (8.8,-2.1) {{\tiny{$\bullet$}}};
\node at (9.1,-1.7) {{\tiny{$\bullet$}}};
\node at (9.1,-1.1) {{\tiny{$\bullet$}}};
\draw[-](5.8,1)--(5.8,0.4);
\draw[-](6.1,0.8)--(5.8,0.4);
\draw[-](5.8,1)--(6.1,1.4);
\draw[-](6.1,1.4)--(6.1,0.8);
\node at (5.8,1) {{\tiny{$\bullet$}}};
\node at (5.8,0.4) {{\tiny{$\bullet$}}};
\node at (6.1,1.4) {{\tiny{$\bullet$}}};
\node at (6.1,0.8) {{\tiny{$\bullet$}}};
\draw[-](6.1,1.4)--(2.6,1.4);
\draw[-](2.3,1)--(3.8,0.4);
\draw[-](2.3,0.4)--(2.9,-0.4);
\draw[-](2.9,-0.4)--(3.8,-0.2);
\draw[-](2.6,0.8)--(4.4,-0.1);
\draw[-](6.1,0.8)--(4.4,-0.1);
\draw[-](2.9,-0.4)--(4.1,-0.5);
\draw[-](4.1,-0.5)--(4.9,-0.3);
\draw[-](5.6,-1.7)--(4.1,-1.1);
\draw[-](7.9,-0.7)--(4.4,-0.7);
\draw[-](7.6,-1.1)--(5.6,-1.7);
\draw[-](8.8,-2.1)--(5.3,-2.1);
\draw[-](9.1,-1.7)--(7.6,-1.1);
\draw[-](8.8,-1.5)--(7,-0.6);
\draw[-](7.6,-0.5)--(6.4,-0.4);
\draw[-](7.9,-0.1)--(6.1,0.8);
\draw[line width=1pt,dotted](8.7,0)--(9.3,0);
\draw[line width=1pt,dotted](-2,0)--(-1.4,0);
\end{tikzpicture}
\caption{The tilting graph $\mathcal{G}(\mathcal{T}_{\mathbb{X}})$ of ${\rm coh}\mbox{-}\mathbb{X}(2,2)$}
\end{figure}}
\end{exm}

\begin{exm}
Consider the case of $p=2$ and $q=3,$ by using Theorem \ref{description of tilting graph of vector bundles}, the shape of the tilting graph $\mathcal{G}(\mathcal{T}_{\mathbb{X}}^{\nu})$ of ${\rm vec}\mbox{-}\mathbb{X}(2,3)$ is drawn as below:
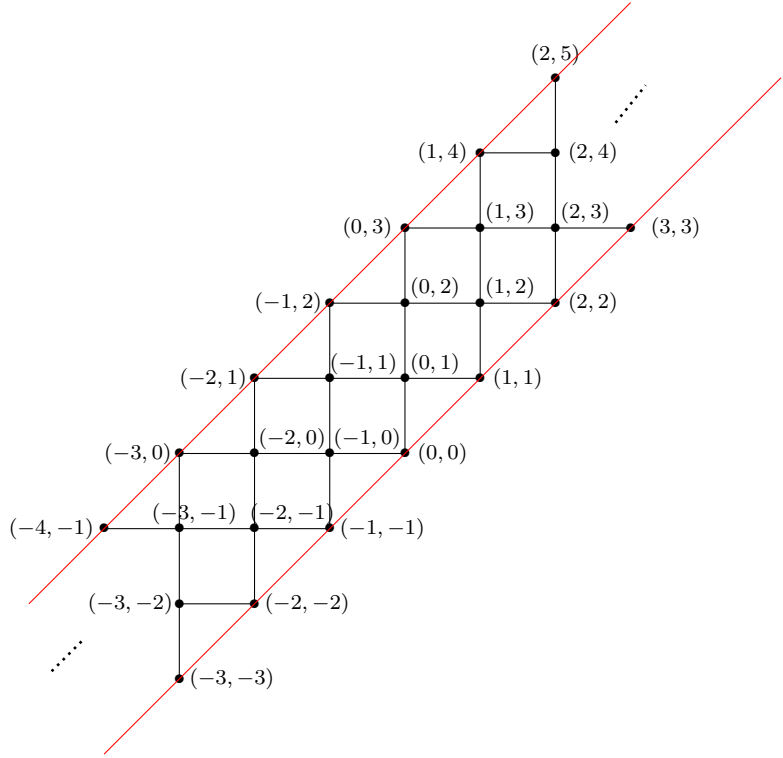
\begin{figure}[H]
\begin{tikzpicture}
\node()at(-3,-3){\tiny$\bullet$};
\node()at(-2.3,-3){\tiny$(-3,-3)$};
\node()at(-3,-2){\tiny$\bullet$};
\node()at(-3.65,-2){\tiny$(-3,-2)$};
\node()at(-3,-1){\tiny$\bullet$};
\node()at(-2.8,-0.8){\tiny$(-3,-1)$};
\node()at(-4,-1){\tiny$\bullet$};
\node()at(-4.7,-1){\tiny$(-4,-1)$};
\node()at(-3,0){\tiny$\bullet$};
\node()at(-3.55,0){\tiny$(-3,0)$};
\node()at(-2,-2){\tiny$\bullet$};
\node()at(-1.3,-2){\tiny$(-2,-2)$};
\node()at(-2,-1){\tiny$\bullet$};
\node()at(-1.5,-0.8){\tiny$(-2,-1)$};
\node()at(-2,0){\tiny$\bullet$};
\node()at(-1.5,0.2){\tiny$(-2,0)$};
\node()at(-2,1){\tiny$\bullet$};
\node()at(-2.55,1){\tiny$(-2,1)$};
\node()at(-1,-1){\tiny$\bullet$};
\node()at(-0.3,-1){\tiny$(-1,-1)$};
\node()at(-1,0){\tiny$\bullet$};
\node()at(-0.5,0.2){\tiny$(-1,0)$};
\node()at(-1,1){\tiny$\bullet$};
\node()at(-0.55,1.2){\tiny$(-1,1)$};
\node()at(-1,2){\tiny$\bullet$};
\node()at(-1.55,2){\tiny$(-1,2)$};
\node()at(0,0){\tiny$\bullet$};
\node()at(0.5,0){\tiny$(0,0)$};
\node()at(0,1){\tiny$\bullet$};
\node()at(0.4,1.2){\tiny$(0,1)$};
\node()at(0.4,2.2){\tiny$(0,2)$};
\node()at(0,2){\tiny$\bullet$};
\node()at(0,3){\tiny$\bullet$};
\node()at(-0.5,3){\tiny$(0,3)$};
\node()at(1,1){\tiny$\bullet$};
\node()at(1.5,1){\tiny$(1,1)$};
\node()at(1,2){\tiny$\bullet$};
\node()at(1.4,2.2){\tiny$(1,2)$};
\node()at(1,3){\tiny$\bullet$};
\node()at(1,4){\tiny$\bullet$};
\node()at(0.5,4){\tiny$(1,4)$};
\node()at(2,2){\tiny$\bullet$};
\node()at(2.5,2){\tiny$(2,2)$};
\node()at(2,3){\tiny$\bullet$};
\node()at(2.4,3.2){\tiny$(2,3)$};
\node()at(1.4,3.2){\tiny$(1,3)$};
\node()at(2,4){\tiny$\bullet$};
\node()at(2.5,4){\tiny$(2,4)$};
\node()at(2,5){\tiny$\bullet$};
\node()at(3,3){\tiny$\bullet$};
\node()at(3.6,3){\tiny$(3,3)$};
\draw[-](-3,-3)--(-3,-2);
\draw[-](2,3)--(3,3);
\draw[-](-3,-2)--(-3,-1);
\draw[-](-3,-2)--(-2,-2);
\draw[-](-3,-1)--(-2,-1);
\draw[-](-3,-1)--(-3,0);
\draw[-](-3,0)--(-2,0);
\draw[-](-2,-1)--(-2,0);
\draw[-](-2,-2)--(-2,-1);
\draw[-](-2,0)--(-2,1);
\draw[-](-2,0)--(-1,0);
\draw[-](-1,0)--(-1,1);
\draw[-](-1,0)--(0,0);
\draw[-](-1,0)--(-1,-1);
\draw[-](-1,1)--(-1,2);
\draw[-](-1,1)--(0,1);
\draw[-](0,1)--(0,0);
\draw[-](0,1)--(0,2);
\draw[-](-3,-1)--(-4,-1);
\draw[-](0,2)--(-1,2);
\draw[-](1,1)--(0,1);
\draw[-](1,1)--(1,2);
\draw[-](0,2)--(0,3);
\draw[-](0,2)--(1,2);
\draw[-](1,2)--(1,3);
\draw[-](1,2)--(2,2);
\draw[-](0,3)--(1,3);
\draw[-](1,3)--(1,4);
\draw[-](1,4)--(2,4);
\draw[-](1,3)--(2,3);
\draw[-](2,2)--(2,3);
\draw[-](2,3)--(2,4);
\draw[-](2,4)--(2,5);
\draw[-](-1,-1)--(-2,-1);
\draw[-](-1,1)--(-2,1);
\node()at(2,5.3){\tiny$(2,5)$};
\draw[-,red](-4,-4)--(5,5);
\draw[-,red](3,6)--(-5,-2);
\draw[line width=1pt,dotted](-4.3,-2.5)--(-4.7,-2.9);
\draw[line width=1pt,dotted](2.8,4.4)--(3.2,4.9);
\end{tikzpicture}
\caption{The tilting graph $\mathcal{G}(\mathcal{T}_{\mathbb{X}}^{\nu})$ of ${\rm vec}\mbox{-}\mathbb{X}(2,3)$}
\end{figure}
It can be imaginable that the shape of the tilting graph $\mathcal{G}(\mathcal{T}_{\mathbb{X}})$ of ${\rm coh}\mbox{-}\mathbb{X}(2,3)$ is very difficult, here we omit the description of the tilting graph of ${\rm coh}\mbox{-}\mathbb{X}(2,3)$.
\end{exm}

\section{Geometric interpretation of automorphism group of ${\rm coh}\mbox{-}\mathbb{X}(p,q)$}

This section is devoted to giving a geometric interpretation for the automorphism group $\Aut({\rm coh}\mbox{-}\mathbb{X}(p,q))$ of the category ${\rm coh}\mbox{-}\mathbb{X}(p,q)$. We show that $\Aut({\rm coh}\mbox{-}\mathbb{X}(p,q))$ is isomorphic to the mapping class group of $A_{p,q}.$

\subsection{Mapping class group} Let's recall from \cite{ASS2012} for the mapping class group of the marked annulus $A_{p,q}$ first.
Write $A_{p,q}=(S, M)$, where
$S$ is an annulus and $M$ is the set of all the marked points in $A_{p,q}.$

Recall that two homeomorphisms $f,\, g$ of $S$ are  \emph{isotopic} if there is a continuous function $H: S\times [0, 1]\longrightarrow S$ such that $H(x, 0)=f$, $H(x, 1)=g$ and $H(x, t):S\longrightarrow S$ is a homeomorphism for each $t\in [0,1]$.

Denote by ${\rm Homeo^{+}}(S, M)$ the group of orientation preserving homeomorphisms from $S$ to $S$ which map $M$ to $M$. Note that there do not require that the points in $M$ are fixed, neither that the points on the boundary of $S$ are fixed nor that each boundary component is mapped to itself. However, if a boundary component is mapped to another component, then $p=q$.

\begin{defn}
A homeomorphism $f: S\longrightarrow S$ is \emph{isotopic to the identity relative to} $M$, if $f$ is isotopic to the identity via an isotopy $H$ that fixes $M$ pointwise, i.e., $H(x, t)=x$ for all $x\in M$ and $t\in [0,1]$.
\end{defn}

Let ${\rm Homeo_0}(S, M)$ be the set $$\{f\in{\rm Homeo^{+}}(S, M)| f \;{\rm is\; isotopic\; to\; the\; identity\; relative\; to\;} M\}.$$ The \emph{mapping class group} $\mathcal{MG} (S, M)$ of $(S, M)$ is defined to be the quotient
$$\mathcal{MG} (S, M)={\rm Homeo^{+}}(S, M)/{\rm Homeo_0}(S, M).$$
Denote by $H_{p,q}=\langle r_1, r_2|r_1r_2=r_2r_1, r_1^{p}=r_2^{q}\rangle$ and
\begin{equation}\label{mapping class group}
\widetilde{H}_{p,q}=
  \left\{
  \begin{array}{ll}
   H_{p,q}, & p\neq q;\\
   H_{p,q}\times\mathbb{Z}_2, &  p=q.\\
  \end{array}
\right.\end{equation}

It is well known that the mapping class group $\mathcal{MG} (A_{p,q})$ of the marked annulus $A_{p,q}$ is isomorphism to $\widetilde{H}_{p,q}$, c.f. \cite{ASS2012}. We identify them in the following.

\begin{rem}\label{rotation}
In our paper, let $r_1$ (\emph{resp.} $r_2$) be the rotation in anti-clockwise direction (\emph{resp.} in clockwise direction) sending each marked point in $\partial$ (\emph{resp.} $\partial^{\prime}$) to the next marked point in $\partial$ (\emph{resp.} $\partial^{\prime}$), i.e., for any $a, b\in\mathbb{Z},$
\begin{align*}&r_1((\frac{a}{p})_{\partial})=(\frac{a+1}{p})_{\partial}, &&r_1((\frac{b}{q})_{\partial^{\prime}})=(\frac{b}{q})_{\partial^{\prime}};\\
&r_2((\frac{a}{p})_{\partial})=(\frac{a}{p})_{\partial}, &&r_2((\frac{b}{q})_{\partial^{\prime}})=(\frac{b-1}{q})_{\partial^{\prime}}.
\end{align*}
For the case of $p=q$, there exists an involution $\sigma$ of $\mathcal{MG} (A_{p,q})$ maps $\partial$ to $\partial^{\prime}$, satisfying that $\sigma((\frac{a}{p})_{\partial})=(-\frac{a}{p})_{\partial^{\prime}}$ for any $a\in\mathbb{Z}$, in particular, $\sigma(0_\partial)=0_{\partial^{\prime}}$.
\end{rem}

Let $\Aut({\rm coh}\mbox{-}\mathbb{X}(p,q))$ be the automorphism group of the category ${\rm coh}\mbox{-}\mathbb{X}(p,q)$ consists of isomorphism classes of $\mathbf k$-linear self-equivalences on ${\rm coh}\mbox{-}\mathbb{X}(p,q)$.
Then we have
\begin{prop}\label{isomorphism}
$\Aut({\rm coh}\mbox{-}\mathbb{X}(p,q))\cong \mathcal{MG} (A_{p,q}).$
\end{prop}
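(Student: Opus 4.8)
The plan is to exhibit an explicit group homomorphism $\psi \colon \widetilde{H}_{p,q} \to \Aut({\rm coh}\mbox{-}\mathbb{X}(p,q))$ and show it is an isomorphism, using the identification $\mathcal{MG}(A_{p,q}) \cong \widetilde{H}_{p,q}$ recalled above. First I would recall the standard description of $\Aut({\rm coh}\mbox{-}\mathbb{X}(p,q))$: the grading group $\mathbb{L}(p,q)$ acts by the degree-shift autoequivalences $X \mapsto X(\vec{x})$, and — because the parameter sequence $\boldsymbol\lambda = (\infty, 0)$ has two entries with weights $p$ and $q$ — there is, when $p = q$, an additional autoequivalence swapping the two exceptional points $\infty$ and $0$. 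A theorem of Lenzing--Meltzer (see \cite{Len2011}) identifies $\Aut({\rm coh}\mbox{-}\mathbb{X}(p,q))$ with the semidirect product of $\mathbb{L}(p,q)$ (acting by twists) and the finite group of symmetries of the weight datum; since $\boldsymbol\lambda$ consists of just two marked points, that symmetry group is trivial for $p \neq q$ and is $\mathbb{Z}_2$ for $p = q$. Thus $\Aut({\rm coh}\mbox{-}\mathbb{X}(p,q)) \cong \mathbb{L}(p,q)$ for $p \neq q$ and $\cong \mathbb{L}(p,q) \rtimes \mathbb{Z}_2$ for $p = q$.

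Next I would check that $\mathbb{L}(p,q) \cong H_{p,q}$ as abelian groups: both are the abelian group on two generators $r_1, r_2$ (resp.\ $\vec{x}_1, \vec{x}_2$) with the single relation $r_1^p = r_2^q$ (resp.\ $p\vec{x}_1 = q\vec{x}_2$), and $H_{p,q}$ is abelian by its defining relation $r_1 r_2 = r_2 r_1$. So one defines $\psi$ on generators by sending $r_1 \mapsto (X \mapsto X(\vec{x}_1))$ and $r_2 \mapsto (X \mapsto X(\vec{x}_2))$, and, when $p = q$, sending the extra involution $\sigma$ from Remark \ref{rotation} to the exceptional-point-swapping autoequivalence. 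One verifies the relations hold (twists commute, $X(\vec{x}_1)$ iterated $p$ times equals $X(\vec{c})$ equals $X(\vec{x}_2)$ iterated $q$ times, and $\sigma^2 = \mathrm{id}$), so $\psi$ is a well-defined homomorphism, and it is surjective by the structural description of $\Aut$ above; injectivity follows since a twist $X \mapsto X(\vec{x})$ is isomorphic to the identity functor only if $\vec{x} = 0$ (e.g.\ test on line bundles using Proposition \ref{the properties of coherent sheaves}(5)), and the $\mathbb{Z}_2$-factor is detected by whether $\infty$ and $0$ are interchanged.

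The main obstacle is verifying that $\Aut({\rm coh}\mbox{-}\mathbb{X}(p,q))$ is \emph{exactly} this group and contains nothing more — in particular that there is no "continuous" part coming from automorphisms of the underlying $\mathbb{P}^1_{\mathbf k}$ fixing $\{\infty, 0\}$ pointwise (such as the scalar action $\lambda \mapsto t\lambda$). Here I would use that any $\mathbf{k}$-linear autoequivalence of ${\rm coh}\mbox{-}\mathbb{X}(p,q)$ permutes the rank-one summands and preserves the line bundles, hence induces an automorphism of the Picard group $\mathbb{L}(p,q)$ and permutes the tubes by rank, forcing it to fix or swap the two exceptional points and to fix every ordinary point (since a scalar automorphism of $\mathbb{P}^1$ fixing $0$ and $\infty$ acts nontrivially on $\mathbf{k}^*$, it cannot be realized as an autoequivalence fixing the canonical tilting bundle up to the twists already accounted for — this is the delicate categorical rigidity step, for which I would cite \cite{Len2011} or \cite{Hubner1996}). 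Once the group of autoequivalences is pinned down to $\mathbb{L}(p,q)$ (resp.\ $\mathbb{L}(p,q) \rtimes \mathbb{Z}_2$), the comparison with $\widetilde{H}_{p,q}$ is the routine identification above, completing the proof.
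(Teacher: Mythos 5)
Your proposal is correct and follows essentially the same route as the paper: both rest on the Lenzing--Meltzer computation of $\Aut({\rm coh}\mbox{-}\mathbb{X}(p,q))$ (cited in the paper as Theorem 3.4 of their 2000 paper) as the twist group $\mathbb{L}(p,q)$, enlarged by the point-swapping involution when $p=q$, and then match the presentation of $\mathbb{L}(p,q)$ on $\vec{x}_1,\vec{x}_2$ with $p\vec{x}_1=q\vec{x}_2$ against that of $H_{p,q}$ via $\vec{x}_i\mapsto r_i$, exactly as the paper does in the proof and the subsequent Remark defining $\psi$. The only divergence is that you describe the $p=q$ case as a semidirect product $\mathbb{L}(p,p)\rtimes\mathbb{Z}_2$ where the paper writes a direct product; since the swap conjugates the twist by $\vec{x}_1$ into the twist by $\vec{x}_2$, and the same extension appears on the mapping class group side, this does not affect the conclusion.
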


\begin{proof}
By \cite[Theorem 3.4]{LenzingMelter2000}, we have
\begin{equation}\label{automorphsim group formula}
\Aut({\rm coh}\mbox{-}\mathbb{X}(p,q))\cong
  \left\{
  \begin{array}{ll}
   \mathbb{L}(p,q), & p\neq q;\\
   \mathbb{L}(p,q)\times\mathbb{Z}_2, &  p=q.\\
  \end{array}
\right.\end{equation}

\noindent Observe that $\mathbb{L}(p,q)$ is generated by $\vec{x}_1, \vec{x}_2$ subject to the relation $p\vec{x}_1=q\vec{x}_2$.
By comparing \eqref{mapping class group} and \eqref{automorphsim group formula}, we see that the map $\vec{x}_1\mapsto r_1, \,\,\vec{x}_2\mapsto r_2$ gives a group isomorphism from $\mathbb{L}(p,q)$ to $H_{p,q}$, which induces a group isomorphism from $\Aut({\rm coh}\mbox{-}\mathbb{X}(p,q))$ to $\mathcal{MG} (A_{p,q}).$
\end{proof}

\begin{rem}\label{rem of psi}
In the case $p=q$, recall from \cite{LenzingMelter2000} that there exists a geometric automorphism of $\mathbb{X}(p,q)$ by exchanging the exceptional points $\infty$ and $0$, which induces isomorphism
\begin{equation}\label{involution on category}\sigma_{1,2}: {\rm coh}\mbox{-}\mathbb{X}(p,q)\longrightarrow{\rm coh}\mbox{-}\mathbb{X}(p,q),
\end{equation}
by exchanging $S_{\infty,i}^{(j)}$ with $S_{0,i}^{(j)}$ for any $0\leq i\leq p-1$ and $j\in\mathbb{Z}_{\geq 1}$, and sending line bundles $\co(l_1\vec{x}_1+l_2\vec{x}_2+l\vc)$ to $\co(l_1\vec{x}_2+l_2\vec{x}_1+l\vc)$ for any $0\leq l_i\leq p-1$ with $i=1,2$ and $l\in\mathbb{Z}$.

For any $\vec{x}\in\mathbb{L}(p,q)$, denote by $(\vec{x}):=-\otimes \co(\vec{x})$, which is the auto-equivalence functor in $\Aut({\rm coh}\mbox{-}\mathbb{X}(p,q))$ given by the grading twist with $\vec{x}$. Then we have an isomorphism of groups
\begin{equation}\label{automor psi}\psi: \mathcal{MG} (A_{p,q})\longrightarrow\Aut({\rm coh}\mbox{-}\mathbb{X}(p,q)),
\end{equation}
which sends
$$r_1\mapsto (\vec{x}_1);\quad r_2\mapsto (\vec{x}_2); \quad {\text {and}} \quad \sigma\mapsto \sigma_{1,2} \;\;({\text {when}}\;\; p=q).$$
\end{rem}

\subsection{Proof of Theorem \ref{compatible}}

For any triangulation $\Gamma$ of the marked annulus $A_{p,q}$, we assume $\Gamma$ consists of the following positive bridging arcs and peripheral arcs:
$$[D^{\frac{a_{i}}{p}}_{\frac{b_{i}}{q}}] \;(1\leq i\leq n_1),\; [D^{\frac{i_k-j_k-1}{p}, \frac{i_k}{p}}]\;(1\leq k\leq n_2),\; [D_{\frac{-i_k}{q}, \frac{j_k-i_k+1}{q}}]\;(1\leq k\leq n_3).$$
According to the map (\ref{map}), we obtain a tilting sheaf $$\phi(\Gamma)=\oplus_{i=1}^{n_1}\mathcal{O}(a_{i}\vec{x}_1-b_{i}\vec{x}_2)\oplus(\oplus_{k=1}^{n_2}S_{\infty,i_k}^{(j_k)})\oplus(\oplus_{k=1}^{n_3}S_{0,i_k}^{(j_k)}).$$

Recall from \eqref{automor psi} that $\psi$ is a group isomorphism, we only need to prove that (\ref{compatible of groups action}) holds for any generator $f\in\mathcal{MG} (A_{p,q}).$ First assume $f=r_1$, then we have $\psi(f)=(\vec{x}_1)$. Recall that $$r_1((\frac{a}{p})_{\partial})=(\frac{a+1}{p})_{\partial},\,\, r_1((\frac{b}{q})_{\partial^{\prime}})=(\frac{b}{q})_{\partial^{\prime}},\,\,a, b\in\mathbb{Z}.$$ It follows that the triangulation $f(\Gamma)$ consists of the following arcs:
$$[D^{\frac{a_{i}+1}{p}}_{\frac{b_{i}}{q}}] \;(1\leq i\leq n_1),\; [D^{\frac{i_k-j_k}{p}, \frac{i_k+1}{p}}]\;(1\leq k\leq n_2),\; [D_{\frac{-i_k}{q}, \frac{j_k-i_k+1}{q}}]\;(1\leq k\leq n_3).$$
Then \begin{align*}\phi(f(\Gamma))=&\oplus_{i=1}^{n_1}\mathcal{O}((a_{i}+1)\vec{x}_1-b_{i}\vec{x}_2)
\oplus(\oplus_{k=1}^{n_2}S_{\infty,i_k+1}^{(j_k)})\oplus(\oplus_{k=1}^{n_3}S_{0,i_k}^{(j_k)})\\
=&\phi(\Gamma)(\vec{x}_1)\\
=&\psi(f)(\phi(\Gamma)).
\end{align*}
Hence (\ref{compatible of groups action}) holds in this case. For $f=(r_2)$, the proof is similar.

If $p=q$, then $\mathcal{MG} (A_{p,q})$ has another generator (involution) $f=\sigma$ such that $\sigma((\frac{a}{p})_{\partial})=(-\frac{a}{p})_{\partial^{\prime}}$.
It follows that the triangulation $f(\Gamma)$ consists of the following arcs:
$$[D^{-\frac{b_{i}}{p}}_{-\frac{a_{i}}{p}}] \;(1\leq i\leq n_1),\; [D_{-\frac{i_k}{p}, \frac{j_k-i_k+1}{p}}]\,(1\leq k\leq n_2),\, [D^{\frac{i_k-j_k-1}{p}, \frac{i_k}{p}}]\,(1\leq k\leq n_3).$$
Then $$\phi (f(\Gamma))=\oplus_{i=1}^{n_1}\mathcal{O}(a_{i}\vec{x}_2-b_{i}\vec{x}_1)\oplus(\oplus_{k=1}^{n_2}S_{0,i_k}^{(j_k)})\oplus(\oplus_{k=1}^{n_3}S_{\infty,i_k}^{(j_k)})=\psi(f)(\phi(\Gamma)).$$
Hence (\ref{compatible of groups action}) holds.
This finishes the proof.

Recall from \cite{BQ2012} that the \emph{exchange graph} $EG(A_{p,q})$ of $A_{p,q}$ has as vertices the triangulations of $A_{p,q}$, with an edge between two vertices $\Gamma$ and $\Gamma^{\prime}$ whenever $\Gamma^{\prime}$ is obtained from $\Gamma$ by the flip of an arc. By Theorem \ref{compatible}, we have the following corollary.

\begin{cor}
For any $f\in \mathcal{MG} (A_{p,q})$, we have $\phi(f(EG(A_{p,q}))=\psi(f)(\phi(EG(A_{p,q}))$.
\end{cor}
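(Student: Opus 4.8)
The plan is to deduce the corollary from Theorem~\ref{compatible} by upgrading the vertex-wise identity \eqref{compatible of groups action} to an identity of graph automorphisms. First I would recall that, by Theorem~\ref{triangulation and tilting}, the map $\phi$ is a bijection between the vertex set $\mathcal{T}_{A_{p,q}}$ of $EG(A_{p,q})$ and the vertex set $\mathcal{T}_{\mathbb{X}}$ of $\mathcal{G}(\mathcal{T}_{\mathbb{X}})$, and that by Proposition~\ref{mutation and flip} the flip of an arc corresponds under $\phi$ to a tilting mutation. Hence two triangulations $\Gamma,\Gamma'$ are joined by an edge in $EG(A_{p,q})$ (that is, $\Gamma'=\mu_{\gamma}(\Gamma)$ for some arc $\gamma$) if and only if $\phi(\Gamma)$ and $\phi(\Gamma')$ differ in exactly one indecomposable summand, i.e. are joined by an edge in $\mathcal{G}(\mathcal{T}_{\mathbb{X}})$. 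Thus $\phi$ is an isomorphism of graphs $EG(A_{p,q})\xrightarrow{\sim}\mathcal{G}(\mathcal{T}_{\mathbb{X}})$.

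Next I would observe that both actions in play are by graph automorphisms. On the geometric side, $f\in\mathcal{MG}(A_{p,q})$ is represented by an orientation-preserving homeomorphism of $(S,M)$, so it sends any triangulation to a triangulation and any quadrilateral formed by two adjacent triangles of a triangulation to another such quadrilateral; consequently it commutes with flips and acts as an automorphism of $EG(A_{p,q})$. On the categorical side, $\psi(f)\in\Aut({\rm coh}\mbox{-}\mathbb{X}(p,q))$ is a $\mathbf{k}$-linear self-equivalence, hence preserves $\Ext^{1}$-vanishing and the rank of the Grothendieck group; by Proposition~\ref{tilting2} it therefore maps tilting sheaves to tilting sheaves and preserves the relation ``differ by precisely one indecomposable direct summand'', so it acts as an automorphism of $\mathcal{G}(\mathcal{T}_{\mathbb{X}})$.

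Finally I would combine these. Theorem~\ref{compatible} says exactly that $\phi\circ f=\psi(f)\circ\phi$ on vertices, i.e. $\phi(f(\Gamma))=\psi(f)(\phi(\Gamma))$ for every triangulation $\Gamma$. Since $\phi$, $f$ and $\psi(f)$ are all graph morphisms and a graph morphism is determined by its effect on vertices, the equality of the vertex maps forces $\phi\circ f=\psi(f)\circ\phi$ as maps of graphs. Applying both sides to the graph $EG(A_{p,q})$ yields $\phi\big(f(EG(A_{p,q}))\big)=\psi(f)\big(\phi(EG(A_{p,q}))\big)$, which is the claim.

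The one point requiring a little care — and which I expect to be the only real obstacle — is the bookkeeping of orientations: in our model the arcs of a triangulation carry the fixed orientations coming from the set $\mathcal{C}$ of Theorem~\ref{corresponding} (bridging arcs always oriented from $\partial^{\prime}$ to $\partial$), and after a flip one must re-orient the new arc to stay inside $\mathcal{C}$; one has to check that $f$ — in particular the involution $\sigma$ when $p=q$, which interchanges the two boundary components — respects these orientation choices, so that it genuinely induces an automorphism of the \emph{oriented} exchange graph $EG(A_{p,q})$ rather than merely a bijection of unoriented triangulations. This compatibility is already implicit in the proof of Theorem~\ref{compatible} (through the explicit description of $\phi(f(\Gamma))$ there), so no essentially new work is needed beyond recording it.
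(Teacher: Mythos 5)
Your proposal is correct and takes essentially the same route as the paper, which derives the corollary directly from Theorem~\ref{compatible} without further argument; you merely make explicit the routine verifications (that $\phi$ is a graph isomorphism via Theorem~\ref{triangulation and tilting} and Proposition~\ref{mutation and flip}, and that $f$ and $\psi(f)$ act by graph automorphisms) that the paper leaves implicit.
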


\subsection{An involution on $\Lambda_{(p,p)}$}

Recall from (\ref{the map phi}) that there exists a bijective map
$$\varphi: \mathcal{T}^{\nu}_{\mathbb{X}}\longrightarrow \Lambda_{(p,p)}^0; \quad T\mapsto (c_1, c_2, \cdots, c_p)$$
where $\Lambda_{(p,p)}^0=\{(c_1, \cdots, c_p)\in\mathbb{Z}^{p}|c_1\leq \cdots\leq c_p\leq c_1+p\}.$

In case $p=q$, by \eqref{involution on category} there exists an involution $\sigma_{1,2}$ on ${\rm coh}\mbox{-}\mathbb{X}(p,p)$, which induces an automorphism on $\mathcal{G}(\mathcal{T}^{\nu}_{\mathbb{X}})$, still denoted by $\sigma_{1,2}$.

\begin{prop}\label{bijective map of vertex such that commutative}
There exists an involution $\rho: \Lambda_{(p,p)}\longrightarrow \Lambda_{(p,p)}$ such that the following diagram commutes
\begin{figure}[H]\label{commutative of varphirho}
\begin{tikzpicture}
\draw[->](0.25,0)--(1.7,0);
\node()at(-0.4,0){$\mathcal{G}(\mathcal{T}^{\nu}_{\mathbb{X}})$};
\node()at(1,0.2){\tiny{$\sigma_{1,2}$}};
\node()at(1,-2.3){\tiny{$\rho$}};
\node()at(-0.6,-1.2){\tiny{$\varphi$}};
\node()at(2.6,-1.2){\tiny{$\varphi$}};
\node()at(2.3,0){$\mathcal{G}(\mathcal{T}^{\nu}_{\mathbb{X}})$};
\draw[->](-0.4,-0.3)--(-0.4,-2.3);
\draw[->](2.4,-0.3)--(2.4,-2.3);
\node()at(2.45,-2.55){{\tiny{$\Lambda_{(p,p)}$}}};
\node()at(-0.35,-2.55){{\tiny{$\Lambda_{(p,p)}$}}};
\draw[->](0,-2.5)--(2,-2.5);
\end{tikzpicture}
\end{figure}
\end{prop}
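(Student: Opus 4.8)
The plan is to obtain $\rho$ as the transport of the categorical involution $\sigma_{1,2}$ along the bijection $\varphi$ of \eqref{the map phi}, to check that this transported map is an involutive automorphism of the graph $\Lambda_{(p,p)}$, and finally to make it explicit on tuples.

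First I would record that the self-equivalence $\sigma_{1,2}$ of \eqref{involution on category} is exactly $\psi(\sigma)$, where $\sigma\in\mathcal{MG}(A_{p,p})$ is the boundary-swapping involution of Remark \ref{rotation}; this is immediate from the definition \eqref{automor psi} of $\psi$. Since $\sigma$ interchanges the two boundary circles $\partial$ and $\partial^{\prime}$, it carries every bridging arc to a bridging arc (reversing the orientation, which we then restore so as to land again among positive bridging arcs) and every peripheral arc to a peripheral arc; being a homeomorphism, it carries triangulations to triangulations and commutes with flips. Consequently, by Theorem \ref{compatible}, if $T\in\mathcal{T}^{\nu}_{\mathbb{X}}$ has associated triangulation $\Gamma$, all of whose arcs are positive bridging arcs, then $\sigma(\Gamma)$ is again a triangulation by positive bridging arcs, and it corresponds to $\sigma_{1,2}(T)$. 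Hence $\sigma_{1,2}$ restricts to a permutation of $\mathcal{T}^{\nu}_{\mathbb{X}}$, and, because $\sigma$ commutes with flips, to an automorphism of the tilting graph $\mathcal{G}(\mathcal{T}^{\nu}_{\mathbb{X}})$.

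Next I would set $\rho:=\varphi\circ\sigma_{1,2}\circ\varphi^{-1}$. By Theorems \ref{bijection between tilting bundle and combination vertices} and \ref{description of tilting graph of vector bundles}, $\varphi$ is an isomorphism of graphs $\mathcal{G}(\mathcal{T}^{\nu}_{\mathbb{X}})\to\Lambda_{(p,p)}$, so $\rho$ is a well-defined automorphism of $\Lambda_{(p,p)}$, and the identity $\varphi\circ\sigma_{1,2}=\rho\circ\varphi$ is precisely the claimed commutativity of the diagram. That $\rho$ is an involution follows from $\rho^{2}=\varphi\circ\sigma_{1,2}^{2}\circ\varphi^{-1}=\mathrm{id}$, since the geometric automorphism of $\mathbb{X}(p,p)$ exchanging the exceptional points $\infty$ and $0$ — equivalently $\sigma\in\mathcal{MG}(A_{p,p})$ — squares to the identity.

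The remaining task, which I expect to be the main obstacle, is to give $\rho$ in closed combinatorial form. Given $(c_1,\dots,c_p)\in\Lambda_{(p,p)}^0$, one draws the triangulation of a fundamental parallelogram attached to the tilting bundle $\varphi^{-1}(c_1,\dots,c_p)$ as in Figure \ref{p-triangles}, applies $\sigma$ (which turns the $p$ distinguished ``inner'' triangles there into $p$ ``outer'' triangles and reverses the relevant orientations), and then reads off $\rho(c_1,\dots,c_p)$ by recording, for each inner-boundary segment of $\sigma(\Gamma)$, the label of the outer marked point at its apex. Working this out should exhibit $\rho$ as a conjugation/transposition-type operation on the staircase $c_1\le\cdots\le c_p$, together with a cyclic shift correction forced by the relation $p\vec x_1=q\vec x_2$ and the periodicity \eqref{periodicity of covering map}; checking that the resulting tuple lies in $\Lambda_{(p,p)}^0$ and agrees with $\varphi(\sigma_{1,2}(\varphi^{-1}(-)))$ can be done with the same intersection-number bookkeeping already used in the proof of Theorem \ref{dimension and positive intersection}.
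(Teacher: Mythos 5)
Your proposal is correct and follows essentially the same route as the paper: the paper also defines $\rho$ by transporting $\sigma_{1,2}$ through $\varphi$ (it applies $\sigma$ to the triangulation of a fundamental parallelogram, renormalizes via \eqref{periodicity of covering map} to read off the new tuple, and deduces commutativity by construction and the involution property from $\sigma_{1,2}^2=\mathrm{id}$). The explicit combinatorial formula you defer is likewise not written in closed form in the paper, so nothing essential is missing from your argument for the statement as given.
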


\begin{proof}
Let $T$ be a tilting bundle in ${\rm coh}\mbox{-}\mathbb{X}(p,p),$ recall from (\ref{the map phi}) that we can assume $$\varphi(T)=(c_1, \cdots, c_{i-1}, c_i, c_{i+1}, \cdots, c_p),$$ that is, $T$ corresponds to a triangulation $\Gamma$ of (a parallelogram in) $\mathbb{U}$ as follows:

\begin{figure}[H]
\begin{tikzpicture}
\draw[-](0,0)--(0.3,1.8);
\draw[-](0,0)--(6,0);
\draw[-](6,0)--(6.3,1.8);
\draw[-](0.3,1.8)--(6.3,1.8);
\node()at(0,0){\tiny$\bullet$};
\node()at(6,0){\tiny$\bullet$};
\node()at(0.3,1.8){\tiny$\bullet$};
\node()at(6.3,1.8){\tiny$\bullet$};
\node()at(1.1,1.8){\tiny$\bullet$};
\draw[-](0,0)--(1.1,1.8);
\node()at(2.7,1.8){\tiny$\bullet$};
\node()at(3.5,1.8){\tiny$\bullet$};
\node()at(4.3,1.8){\tiny$\bullet$};
\node()at(2.3,0){\tiny$\bullet$};
\node()at(3.7,0){\tiny$\bullet$};
\draw[-](2.7,1.8)--(2.3,0);
\draw[-](3.5,1.8)--(2.3,0);
\draw[-](3.5,1.8)--(3.7,0);
\draw[-](4.3,1.8)--(3.7,0);
\node()at(5.5,1.8){\tiny$\bullet$};
\node()at(4.8,0){\tiny$\bullet$};
\draw[-](5.5,1.8)--(4.8,0);
\draw[-](6.3,1.8)--(4.8,0);
\node()at(0,-0.3){\tiny$\frac{c_1}{p}$};

\node()at(2.3,-0.3){\tiny$\frac{c_i}{p}$};
\node()at(3.8,-0.3){\tiny$\frac{c_{i+1}}{p}$};
\node()at(4.8,-0.3){\tiny$\frac{c_p}{p}$};
\node()at(0.3,2.1){\tiny$0$};
\node()at(1.1,2.1){\tiny$\frac{1}{p}$};
\node()at(2.7,2.1){\tiny$\frac{i-1}{p}$};
\node()at(3.5,2.1){\tiny$\frac{i}{p}$};
\node()at(4.3,2.1){\tiny$\frac{i+1}{p}$};
\node()at(5.5,2.1){\tiny$\frac{p-1}{p}$};
\node()at(6.3,2.1){\tiny$1$};
\node()at(6,-0.3){\tiny$\frac{c_1}{p}+1$};
\draw[line width=1pt,dotted](0.8,-0.2)--(1.4,-0.2);
\draw[line width=1pt,dotted](4.2,-0.2)--(4.5,-0.2);
\draw[line width=1pt,dotted](5.1,-0.2)--(5.5,-0.2);
\draw[line width=1pt,dotted](1.65,2)--(2.15,2);
\draw[line width=1pt,dotted](4.7,2)--(5.1,2);
\draw[line width=1pt,dotted](2.7,-0.2)--(3.3,-0.2);
\node()at(-1,0){\tiny$\partial^{\prime}$};
\node()at(-1,1.8){\tiny$\partial$};
\end{tikzpicture}
\end{figure}
\noindent According to Remark \ref{rotation}, we obtain that $\sigma(\Gamma)$ is a triangulation of the following form:

\begin{figure}[H]
\begin{tikzpicture}
\draw[-](-0.7,0)--(-0.3,1.8);
\draw[-](-0.7,0)--(5.3,0);
\draw[-](5.7,1.8)--(5.3,0);
\draw[-](-0.3,1.8)--(5.7,1.8);
\node()at(-0.7,0){\tiny$\bullet$};
\node()at(5.3,0){\tiny$\bullet$};
\node()at(-0.3,1.8){\tiny$\bullet$};
\node()at(5.7,1.8){\tiny$\bullet$};
\node()at(5.3,-0.3){\tiny$0$};
\node()at(-0.8,-0.3){\tiny$-1$};
\node()at(-0.4,2.1){\tiny$-\frac{c_1}{p}-1$};
\node()at(5.6,2.1){\tiny$-\frac{c_1}{p}$};
\node()at(0.9,1.8){\tiny$\bullet$};
\node()at(2.1,1.8){\tiny$\bullet$};
\node()at(-0.1,0){\tiny$\bullet$};
\node()at(4.5,1.8){\tiny$\bullet$};
\node()at(4.7,0){\tiny$\bullet$};
\node()at(4.7,-0.3){\tiny$-\frac{1}{p}$};
\node()at(4.5,2.1){\tiny$-\frac{c_2}{p}$};
\draw[-](5.7,1.8)--(4.7,0);
\draw[-](4.5,1.8)--(4.7,0);
\node()at(0.9,2.1){\tiny$-\frac{c_p}{p}$};
\node()at(2.1,2.1){\tiny$-\frac{c_{i}}{p}$};
\draw[-](0.9,1.8)--(-0.7,0);
\draw[-](0.9,1.8)--(-0.1,0);
\node()at(-0.1,-0.3){\tiny$-\frac{p-1}{p}$};
\node()at(-1.5,0){\tiny$\partial^{\prime}$};
\node()at(-1.5,1.8){\tiny$\partial$};
\node()at(3.3,1.8){\tiny$\bullet$};
\node()at(3.25,2.1){\tiny$-\frac{c_{i-1}}{p}$};
\node()at(2,-0.3){\tiny$-\frac{i-1}{p}$};
\node()at(2,0){\tiny$\bullet$};
\draw[-](2,0)--(3.3,1.8);
\draw[-](2,0)--(2.1,1.8);
\draw[line width=1pt,dotted](0.25,2.1)--(0.55,2.1);
\draw[line width=1pt,dotted](1.4,2.1)--(1.7,2.1);
\draw[line width=1pt,dotted](4.95,2.1)--(5.25,2.1);
\draw[line width=1pt,dotted](2.5,2.1)--(2.8,2.1);
\draw[line width=1pt,dotted](3.85,2.1)--(4.15,2.1);
\draw[line width=1pt,dotted](0.7,-0.3)--(1.2,-0.3);
\draw[line width=1pt,dotted](3.1,-0.3)--(3.6,-0.3);
\end{tikzpicture}
\end{figure}
\noindent which corresponds to the tilting bundle $\sigma_{1,2}(T)$; see (\ref{involution on category}).
By using (\ref{periodicity of covering map}), the above triangulation can be converted to its normal form (\ref{assumption on chains}) as follows:
\begin{figure}[H]
\begin{tikzpicture}
\draw[-](0,0)--(0.3,1.8);
\draw[-](0,0)--(6,0);
\draw[-](6,0)--(6.3,1.8);
\draw[-](0.3,1.8)--(6.3,1.8);
\node()at(0,0){\tiny$\bullet$};
\node()at(6,0){\tiny$\bullet$};
\node()at(0.3,1.8){\tiny$\bullet$};
\node()at(6.3,1.8){\tiny$\bullet$};
\node()at(1.1,1.8){\tiny$\bullet$};
\draw[-](0,0)--(1.1,1.8);
\node()at(2.7,1.8){\tiny$\bullet$};
\node()at(3.5,1.8){\tiny$\bullet$};
\node()at(4.3,1.8){\tiny$\bullet$};
\node()at(2.3,0){\tiny$\bullet$};
\node()at(3.7,0){\tiny$\bullet$};
\draw[-](2.7,1.8)--(2.3,0);
\draw[-](3.5,1.8)--(2.3,0);
\draw[-](3.5,1.8)--(3.7,0);
\draw[-](4.3,1.8)--(3.7,0);
\node()at(5.5,1.8){\tiny$\bullet$};
\node()at(4.8,0){\tiny$\bullet$};
\draw[-](5.5,1.8)--(4.8,0);
\draw[-](6.3,1.8)--(4.8,0);
\node()at(0,-0.3){\tiny$\frac{d_1}{p}$};

\node()at(2.3,-0.3){\tiny$\frac{d_i}{p}$};
\node()at(3.8,-0.3){\tiny$\frac{d_{i+1}}{p}$};
\node()at(4.8,-0.3){\tiny$\frac{d_p}{p}$};
\node()at(0.3,2.1){\tiny$0$};
\node()at(1.1,2.1){\tiny$\frac{1}{p}$};
\node()at(2.7,2.1){\tiny$\frac{i-1}{p}$};
\node()at(3.5,2.1){\tiny$\frac{i}{p}$};
\node()at(4.3,2.1){\tiny$\frac{i+1}{p}$};
\node()at(5.5,2.1){\tiny$\frac{p-1}{p}$};
\node()at(6.3,2.1){\tiny$1$};
\node()at(6,-0.3){\tiny$\frac{d_1}{p}+1$};
\draw[line width=1pt,dotted](0.8,-0.2)--(1.4,-0.2);
\draw[line width=1pt,dotted](4.2,-0.2)--(4.5,-0.2);
\draw[line width=1pt,dotted](5.1,-0.2)--(5.5,-0.2);
\draw[line width=1pt,dotted](1.65,2)--(2.15,2);
\draw[line width=1pt,dotted](4.7,2)--(5.1,2);
\draw[line width=1pt,dotted](2.7,-0.2)--(3.3,-0.2);
\node()at(-1,0){\tiny$\partial^{\prime}$};
\node()at(-1,1.8){\tiny$\partial$};
\end{tikzpicture}
\end{figure}
\noindent This induces a well-defined map:
$$\rho: \Lambda_{(p,p)}^0\longrightarrow \Lambda_{(p,p)}^0, \quad (c_1, c_2, \cdots, c_p)\mapsto (d_1, d_2, \cdots, d_p).$$
By construction we have $\varphi(\sigma_{1,2}(T))=\rho(\varphi(T))$.
According to the proof of Theorem \ref{description of tilting graph of vector bundles}, we see that $\varphi$ is an isomorphism between the graphs $\mathcal{G}(\mathcal{T}^{\nu}_{\mathbb{X}})$ and $\Lambda_{(p,p)}$. This proves the commutativity of the diagram. Moreover,
since $\sigma_{1,2}$ is an involution, it follows that $\rho$ is also an involution.
\end{proof}

\begin{exm}
For the case of $p=q=4,$ let $T$ be a tilting bundle and $\varphi(T)=(0, 0, 1, 4),$ that is, $T$ corresponds to a triangulation $\Gamma$ of (a parallelogram in) $\mathbb{U}$ as follows:
\begin{figure}[H]
\begin{tikzpicture}
\draw[-](0,0)--(0,1.5);
\draw[-](0,0)--(4,0);
\draw[-](0,1.5)--(4,1.5);
\draw[-](4,0)--(4,1.5);
\node()at(0,0){\tiny$\bullet$};
\node()at(4,0){\tiny$\bullet$};
\node()at(0,1.5){\tiny$\bullet$};
\node()at(4,1.5){\tiny$\bullet$};
\node()at(1,0){\tiny$\bullet$};
\node()at(2,0){\tiny$\bullet$};
\node()at(3,0){\tiny$\bullet$};
\node()at(1,1.5){\tiny$\bullet$};
\node()at(2,1.5){\tiny$\bullet$};
\node()at(3,1.5){\tiny$\bullet$};
\node()at(0,1.8){\tiny$0$};
\node()at(1,1.8){\tiny$\frac{1}{4}$};
\node()at(2,1.8){\tiny$\frac{2}{4}$};
\node()at(3,1.8){\tiny$\frac{3}{4}$};
\node()at(4,1.8){\tiny$1$};
\node()at(0,-0.3){\tiny$0$};
\node()at(1,-0.3){\tiny$\frac{1}{4}$};
\node()at(2,-0.3){\tiny$\frac{2}{4}$};
\node()at(3,-0.3){\tiny$\frac{3}{4}$};
\node()at(4,-0.3){\tiny$1$};
\node()at(-1,1.5){\tiny$\partial$};
\node()at(-1,0){\tiny$\partial^{\prime}$};

\draw[-](0,0)--(1,1.5);
\draw[-](0,0)--(2,1.5);
\draw[-](1,0)--(2,1.5);
\draw[-](1,0)--(3,1.5);
\draw[-](2,0)--(3,1.5);
\draw[-](3,0)--(3,1.5);
\draw[-](4,0)--(3,1.5);
\end{tikzpicture}
\end{figure}
\noindent Then $\sigma_{1,2}(T)$ corresponds to a triangulation $\sigma(\Gamma)$ of (a parallelogram in) $\mathbb{U}$ as follows:

\begin{figure}[H]
\begin{tikzpicture}
\draw[-](0,0)--(0,1.5);
\draw[-](0,0)--(4,0);
\draw[-](0,1.5)--(4,1.5);
\draw[-](4,0)--(4,1.5);
\node()at(0,0){\tiny$\bullet$};
\node()at(4,0){\tiny$\bullet$};
\node()at(0,1.5){\tiny$\bullet$};
\node()at(4,1.5){\tiny$\bullet$};
\node()at(1,0){\tiny$\bullet$};
\node()at(2,0){\tiny$\bullet$};
\node()at(3,0){\tiny$\bullet$};
\node()at(1,1.5){\tiny$\bullet$};
\node()at(2,1.5){\tiny$\bullet$};
\node()at(3,1.5){\tiny$\bullet$};
\node()at(0,1.8){\tiny$-1$};
\node()at(0.9,1.8){\tiny$-\frac{3}{4}$};
\node()at(1.9,1.8){\tiny$-\frac{2}{4}$};
\node()at(2.9,1.8){\tiny$-\frac{1}{4}$};
\node()at(4,1.8){\tiny$0$};
\node()at(0,-0.3){\tiny$-1$};
\node()at(0.9,-0.3){\tiny$-\frac{3}{4}$};
\node()at(1.9,-0.3){\tiny$-\frac{2}{4}$};
\node()at(2.9,-0.3){\tiny$-\frac{1}{4}$};
\node()at(4,-0.3){\tiny$0$};
\node()at(-1,1.5){\tiny$\partial$};
\node()at(-1,0){\tiny$\partial^{\prime}$};

\draw[-](1,0)--(0,1.5);
\draw[-](1,0)--(1,1.5);
\draw[-](1,0)--(2,1.5);
\draw[-](1,0)--(3,1.5);
\draw[-](2,0)--(3,1.5);
\draw[-](2,0)--(4,1.5);
\draw[-](3,0)--(4,1.5);
\draw[-](4,0)--(4,1.5);
\end{tikzpicture}
\end{figure}
\noindent By using (\ref{periodicity of covering map}), we can change the above triangulation to its normal form (red arcs):
\begin{figure}[H]
\begin{tikzpicture}
\draw[-](0,0)--(0,1.5);
\draw[-](0,0)--(4,0);
\draw[-](0,1.5)--(4,1.5);
\draw[-](4,0)--(4,1.5);
\node()at(0,0){\tiny$\bullet$};
\node()at(4,0){\tiny$\bullet$};
\node()at(0,1.5){\tiny$\bullet$};
\node()at(4,1.5){\tiny$\bullet$};
\node()at(1,0){\tiny$\bullet$};
\node()at(2,0){\tiny$\bullet$};
\node()at(3,0){\tiny$\bullet$};
\node()at(1,1.5){\tiny$\bullet$};
\node()at(2,1.5){\tiny$\bullet$};
\node()at(3,1.5){\tiny$\bullet$};
\node()at(0,1.8){\tiny$-1$};
\node()at(0.9,1.8){\tiny$-\frac{3}{4}$};
\node()at(1.9,1.8){\tiny$-\frac{2}{4}$};
\node()at(2.9,1.8){\tiny$-\frac{1}{4}$};
\node()at(4,1.8){\tiny$0$};
\node()at(0,-0.3){\tiny$-1$};
\node()at(0.9,-0.3){\tiny$-\frac{3}{4}$};
\node()at(1.9,-0.3){\tiny$-\frac{2}{4}$};
\node()at(2.9,-0.3){\tiny$-\frac{1}{4}$};
\node()at(4,-0.3){\tiny$0$};
\node()at(-1,1.5){\tiny$\partial$};
\node()at(-1,0){\tiny$\partial^{\prime}$};
\draw[-](1,0)--(0,1.5);
\draw[-](1,0)--(1,1.5);
\draw[-](1,0)--(2,1.5);
\draw[-](1,0)--(3,1.5);
\draw[-](2,0)--(3,1.5);
\draw[-](2,0)--(4,1.5);
\draw[-](3,0)--(4,1.5);
\draw[-](4,0)--(4,1.5);

\draw[-](4,0)--(5,0);
\draw[-,red](5,0)--(9,0);
\draw[-,red](4,1.5)--(8,1.5);
\draw[-,red](8,0)--(8,1.5);
\node()at(4,0){\tiny$\bullet$};
\node()at(8,0){\tiny$\bullet$};
\node()at(4,1.5){\tiny$\bullet$};

\node()at(8,1.5){\tiny$\bullet$};
\node()at(1,0){\tiny$\bullet$};
\node()at(6,0){\tiny$\bullet$};
\draw[-,red](6,0)--(8,1.5);
\node()at(7,0){\tiny$\bullet$};
\node()at(5,1.5){\tiny$\bullet$};
\node()at(6,1.5){\tiny$\bullet$};
\node()at(7,1.5){\tiny$\bullet$};
\node()at(5,1.8){\tiny$\frac{1}{4}$};
\node()at(6,1.8){\tiny$\frac{2}{4}$};
\node()at(7,1.8){\tiny$\frac{3}{4}$};
\node()at(8,1.8){\tiny$1$};
\node()at(5,-0.3){\tiny$\frac{1}{4}$};
\node()at(5,0){\tiny$\bullet$};
\node()at(6,-0.3){\tiny$\frac{2}{4}$};
\node()at(7,-0.3){\tiny$\frac{3}{4}$};
\node()at(8,-0.3){\tiny$1$};
\draw[-,red](5,0)--(4,1.5);
\draw[-,red](5,0)--(5,1.5);
\draw[-,red](5,0)--(6,1.5);
\draw[-,red](5,0)--(7,1.5);
\draw[-,red](6,0)--(7,1.5);
\draw[-,red](6,0)--(7,1.5);
\draw[-,red](7,0)--(8,1.5);
\draw[-,red](8,0)--(8,1.5);
\draw[-,red](8,1.5)--(9,0);

\node()at(9,0){\tiny$\bullet$};
\end{tikzpicture}
\end{figure}
\noindent Then $\rho(0, 0, 1, 4)=(1, 1, 1, 2)$, and $\varphi(\sigma_{1,2}(T))=\rho(\varphi(T))$.
\end{exm}

\subsection{Proof of Theorem \ref{compatible2}}
Assume $\nu=(c_1, c_2, \cdots, c_p)$. Let $T$ be a tilting bundle in ${\rm coh}\mbox{-}\mathbb{X}(p,q)$ corresponding to $\nu=(c_1, c_2, \cdots, c_p)$, that is, $\varphi(T)=(c_1, c_2, \cdots, c_p)$; see \eqref{the map phi}. According to the proof of Theorem \ref{bijection between tilting bundle and combination vertices}, we obtain
$$\varphi(T(\vec{x}_1))=(c_p-q, c_1, c_2, \cdots, c_{p-1}),\;\;\;\varphi(T(\vec{x}_2))=(c_1-1, c_2-1, \cdots, c_{p}-1).$$
The following assignments define bijective maps $$\rho_1: \Lambda_{(p,q)}^0\longrightarrow \Lambda_{(p,q)}^0;\quad (c_1, c_2, \cdots, c_{p-1}, c_p)\mapsto (c_p-q, c_1, c_2, \cdots, c_{p-1});$$
and $$\rho_2: \Lambda_{(p,q)}^0\longrightarrow \Lambda_{(p,q)}^0; \quad (c_1, c_2, \cdots, c_{p-1}, c_p)\mapsto (c_1-1, c_2-1, \cdots, c_{p}-1).$$

Observe that the grading shift by $(\vec{x}_i)$ on ${\rm coh}\mbox{-}\mathbb{X}(p,q)$ induces an automorphism on $\mathcal{G}(\mathcal{T}^{\nu}_{\mathbb{X}})$, still denoted by $(\vec{x}_i)$.
Similar as the proof of Proposition \ref{bijective map of vertex such that commutative}, we have the following commutative diagrams for $i=1,2$
\begin{figure}[H]
\begin{tikzpicture}
\draw[->](0.3,0)--(1.7,0);
\node()at(-0.4,0){$\mathcal{G}(\mathcal{T}^{\nu}_{\mathbb{X}})$};
\node()at(1,0.2){\tiny{$(\vec{x}_i)$}};
\node()at(1,-2.3){\tiny{$\rho_i$}};
\node()at(-0.6,-1.2){\tiny{$\varphi$}};
\node()at(2.6,-1.2){\tiny{$\varphi$}};
\node()at(2.3,0){$\mathcal{G}(\mathcal{T}^{\nu}_{\mathbb{X}})$};
\draw[->](-0.4,-0.3)--(-0.4,-2.3);
\draw[->](2.4,-0.3)--(2.4,-2.3);
\node()at(2.45,-2.55){{\tiny{$\Lambda_{(p,q)}$}}};
\node()at(-0.35,-2.55){{\tiny{$\Lambda_{(p,q)}$}}};
\draw[->](0,-2.5)--(2,-2.5);
\end{tikzpicture}
\end{figure}

It is easy to check that $\rho_1\rho_2=\rho_2\rho_1$ and $\rho_1^p=\rho_2^q$. Hence, the assignments $r_1\mapsto \rho_1, r_2\mapsto \rho_2$ defines a group homomorphism from $\widetilde{H}_{p,q}$ to $\Aut(\Lambda_{(p,q)})$.
Combining with Proposition \ref{bijective map of vertex such that commutative}, we obtain a group action of $\widetilde{H}_{p,q}$ on $\Lambda_{(p,q)}$.
Then (\ref{compatible of groups action2}) follows from Theorem \ref{description of tilting graph of vector bundles}, and hence we finish the proof of Theorem \ref{compatible2}.

\section{Geometric interpretation of perpendicular category}

In this section, we focus on the geometric interpretation of the perpendicular category of an exceptional object in ${\rm coh}\mbox{-}\mathbb{X}(p,q)$.

Recall that a coherent sheaf $E$ is called \emph{exceptional} if ${\rm Hom}(E,E)=\mathbf{k}$ and ${\rm Ext}^{1}(E,E)=0$.
For an exceptional sheaf $E\in {\rm coh}\mbox{-}\mathbb{X}(p,q)$, define the \emph{right perpendicular category}
$$E^{\bot}:=\{X\in{\rm coh}\mbox{-}\mathbb{X}(p,q) \,|{\rm Hom}(E, X)=0={\rm Ext^{1}}(E, X) \};$$
and the \emph{left perpendicular category}
$$^{\bot}E:=\{X\in{\rm coh}\mbox{-}\mathbb{X}(p,q)\, |{\rm Hom}(X, E)=0={\rm Ext^{1}}(X, E)\}.$$

By Proposition \ref{the properties of coherent sheaves} $(1)$, we obtain that  $^{\bot}E=(E(-\vec{\omega}))^{\bot}$, where $\vec{\omega}=-(\vec{x}_1+\vec{x}_2)$ is the dualizing element in $\mathbb{L}(p,q).$

\begin{lem}\label{exceptional object and arc}
Let $E$ be an indecomposable sheaf in ${\rm coh}\mbox{-}\mathbb{X}(p,q)$, then $E$ is an exceptional object if and only if $\phi^{-1}(E)$ is an arc in $A_{p,q}$.
\end{lem}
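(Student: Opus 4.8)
The plan is to separate the indecomposable sheaves into those supported at an ordinary point $\lambda\in\mathbf k^{*}$ and the rest, reducing the statement for the rest to the positive-intersection theorem (Theorem~\ref{dimension and positive intersection}), which is already available. If $E=S_\lambda^{(j)}$ is supported at an ordinary point it lies in a homogeneous tube, so $\tau E=E$ and $\Ext^{1}(E,E)\cong D\Hom(E,E)\neq 0$; hence $E$ is not exceptional. Correspondingly $\phi^{-1}(E)$ is a $\mathbf k^{*}$-parameterized $j$-cycle, namely the image of a curve in the open interior $\mathbb{U}^{0}$ whose two ends are interior points rather than marked points, so it is not an arc of $A_{p,q}$. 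Both conditions fail, so the equivalence holds in this case.

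Now suppose $E$ is indecomposable and not supported at an ordinary point, so that $E$ is a line bundle, an $S_{\infty,i}^{(j)}$, or an $S_{0,i}^{(j)}$, and $\gamma:=\phi^{-1}(E)$ is a positive bridging curve or a peripheral curve belonging to $\mathcal{C}$. I would first observe that such a curve has its endpoints at marked points and, being a bridging or a peripheral curve in $\mathcal{C}$, bounds neither a monogon nor a digon; therefore $\gamma$ is an arc of $A_{p,q}$ if and only if $\gamma$ is homotopic to an embedded curve. Secondly, at any transverse self-crossing of the oriented curve $\gamma$ one strand crosses the other from the right, so each self-crossing is a positive self-intersection, and hence $I^{+}(\gamma,\gamma)=0$ if and only if $\gamma$ is homotopic to an embedded curve. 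Since $E$ is not supported at an ordinary point, Theorem~\ref{dimension and positive intersection} gives ${\rm dim}_{\mathbf k}\Ext^{1}(E,E)=I^{+}(\gamma,\gamma)$, and chaining these equivalences shows that $\gamma$ is an arc of $A_{p,q}$ if and only if $\Ext^{1}(E,E)=0$. It then remains to upgrade $\Ext^{1}(E,E)=0$ to exceptionality, i.e. to check $\Hom(E,E)\cong\mathbf k$: for a line bundle this is Proposition~\ref{the properties of coherent sheaves}(4), while for $E=S_{\infty,i}^{(j)}$ in the tube of rank $p$ (and symmetrically at $0$) the vanishing $\Ext^{1}(E,E)\cong D\Hom(S_{\infty,i}^{(j)},S_{\infty,i-1}^{(j)})=0$ forces the quasi-length $j$ to satisfy $j\le p-1$, and an indecomposable of quasi-length $<p$ in such a tube has one-dimensional endomorphism ring; this is routine tube combinatorics (cf. \cite{SS2007, BM2012}). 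Assembling the cases proves the Lemma.

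The step I expect to be the main obstacle is the purely topological middle claim --- that among the bridging and peripheral curves of $\mathcal{C}$ the vanishing of $I^{+}(\gamma,\gamma)$ detects exactly the arcs. This requires the sign bookkeeping at double points (the minimal self-crossing number equals the minimal number of positive self-intersections) together with the verification that an embedded representative of an element of $\mathcal{C}$ is genuinely an arc in the sense of Section~\ref{gmwpl}: its endpoints are marked points, and it cuts out neither a monogon nor a digon. If one prefers to bypass Theorem~\ref{dimension and positive intersection}, the same dichotomy follows directly from Theorem~\ref{corresponding}: the lift to $\mathbb{U}$ of $\phi^{-1}(S_{\infty,i}^{(j)})$ spans a horizontal band of width $\tfrac{j+1}{p}$, which $\pi$ embeds precisely when that width is at most $1$, i.e. precisely when $j\le p-1$, and a parallel count handles the line bundle and $S_{0,i}^{(j)}$ cases.
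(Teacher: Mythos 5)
Your proof is correct and follows essentially the same route as the paper, which simply cites Meltzer's result that an indecomposable sheaf is exceptional iff it is rigid and then invokes Theorem~\ref{dimension and positive intersection}. The only differences are that you verify the ``rigid $\Rightarrow$ $\End(E)=\mathbf k$'' step directly (via Proposition~\ref{the properties of coherent sheaves}(4) for line bundles and tube combinatorics for torsion sheaves, rather than citing \cite{M2004}), and that you explicitly treat the homogeneous-tube case and the topological claim that $I^{+}(\gamma,\gamma)=0$ detects embeddedness --- details the paper's two-line proof leaves implicit.
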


\begin{proof}
By \cite[Lemma 3.2.3]{M2004}, $E$ is exceptional in ${\rm coh}\mbox{-}\mathbb{X}(p,q)$ if and only if $\Ext^1(E,E)=0$. Combining with Theorem \ref{dimension and positive intersection}, we get the result.
\end{proof}

We recall some basic facts on cutting marked annulus $A_{p,q}$ along an arc, cf. \cite{MP2014}.

Let $\alpha$ be an arc in $A_{p,q}$. Denote by $A_{p,q}/\alpha$ the new marked surface obtained from $A_{p,q}$ by cutting along the arc $\alpha$ and then removing components which are homeomorphic to a triangle. Up to homeomorphism, it does not depend on the choice of representative of $\alpha$.

\begin{thm}\label{perpendicular category}
Let $E$ be an exceptional object in ${\rm coh}\mbox{-}\mathbb{X}(p,q)$, and assume the oriented arc corresponding to $E$ is $\alpha$. Then the marked surface $A_{p,q}/\alpha$ gives a geometric model for the category $E^{\bot}$.
\end{thm}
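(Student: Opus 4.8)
The plan is to deduce the statement from the geometric models already established, by matching the perpendicular calculus of Geigle--Lenzing with the arc-cutting operation of \cite{MP2014}. By Lemma \ref{exceptional object and arc} the curve $\alpha=\phi^{-1}(E)$ is an honest arc of $A_{p,q}$, so two cases occur: either $E$ is a line bundle and $\alpha$ is a positive bridging arc, or $E=S_{\infty,i}^{(j)}$ (resp. $E=S_{0,i}^{(j)}$) and $\alpha$ is a peripheral arc — here $\Ext^1(E,E)=0$ forces $1\le j\le p-1$ (resp. $1\le j\le q-1$), which is exactly the range in which the peripheral curve is an arc; note also that torsion sheaves at ordinary points are never exceptional, consistently with the fact that $n$-cycles are never arcs. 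Using the autoequivalences of Section~7 (grading twists, and the involution $\sigma_{1,2}$ when $p=q$), which by Theorem \ref{compatible} act compatibly on ${\rm coh}\mbox{-}\mathbb{X}(p,q)$ and on $A_{p,q}$, I would normalize $E$ inside its orbit. In each case $A_{p,q}/\alpha$ is then read off directly: cutting along a bridging arc and discarding the triangle components turns the annulus into a disc with $p+q$ marked points, while cutting along the peripheral arc of $S_{\infty,i}^{(j)}$ (resp. $S_{0,i}^{(j)}$) deletes $j$ of the inner (resp. outer) marked points together with the polygon they span, leaving the marked annulus $A_{p-j,\,q}$ (resp. $A_{p,\,q-j}$, up to relabelling).

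On the categorical side I would invoke the perpendicular calculus for weighted projective lines (\cite{M2004}, \cite{GL87}): $E^{\bot}$ is again a connected hereditary abelian $\mathbf{k}$-category with Serre duality and a tilting object, and the rank of $K_0(E^{\bot})$ is $p+q-1$. When $\alpha$ is peripheral, removing the length-$j$ sincere segment of the exceptional tube of rank $p$ (resp. $q$) lowers that weight by $j$, so $E^{\bot}\simeq {\rm coh}\mbox{-}\mathbb{X}(p-j,q)$ (resp. ${\rm coh}\mbox{-}\mathbb{X}(p,q-j)$). When $\alpha$ is a bridging arc, after normalization $E=\mathcal{O}$ is the source of the canonical exceptional sequence and $E^{\bot}$ is equivalent to the module category of a path algebra of Dynkin type $\mathbb{A}_{p+q-1}$ (with the two arms oriented towards the central vertex). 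In both situations the target category already carries a geometric model of exactly the required shape: Theorem \ref{corresponding} of the present paper for the smaller weighted projective lines, and the standard polygon model (as in \cite{BT2019}) for the type-$\mathbb{A}$ module category. Combined with the topological description of $A_{p,q}/\alpha$ from the first paragraph, this produces the candidate geometric model of $E^{\bot}$.

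What remains to be checked — and this is where I expect the genuine work to lie — is that the abstract equivalence above is transported by $\phi$ to the naive operation of restricting a curve of $A_{p,q}$ to $A_{p,q}/\alpha$; equivalently, that an indecomposable $X\not\cong E$ lies in $E^{\bot}$ if and only if $\phi^{-1}(X)$ is homotopic to a curve disjoint from $\alpha$. The implication ``$\Leftarrow$'' is immediate. For ``$\Rightarrow$'' one rewrites the two defining conditions of $E^{\bot}$ using Theorem \ref{dimension and positive intersection} ($\Ext^1(E,X)\cong$ positive intersections of $\alpha$ with $\phi^{-1}(X)$), Serre duality (Proposition \ref{the properties of coherent sheaves}(1)), and $\phi^{-1}(\tau E)=\tau^{\prime}(\alpha)$ (Theorem \ref{equivalence of categories}(2)): they become the statements that $\phi^{-1}(X)$ crosses neither $\alpha$ from the right nor $\tau^{\prime}(\alpha)$ from the left, and a taut-representative (no-bigon) argument in the universal cover $\mathbb{U}$ identifies this with disjointness from $\alpha$. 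Granting this, the earlier package transfers verbatim — the curves of $A_{p,q}$ disjoint from $\alpha$ are precisely the curves of $A_{p,q}/\alpha$, and their positive intersection numbers, elementary moves and Auslander--Reiten translation are unaffected by the cut — and one concludes via the geometric-model theorems for the smaller category. The main obstacle is thus this compatibility, and in particular the equivalence ``$\phi^{-1}(X)$ avoids $\alpha$ from the left $\Leftrightarrow$ it avoids $\tau^{\prime}(\alpha)$ from the left''; a secondary nuisance is the bridging case, where one must track carefully how many triangle components are discarded, the resulting marked-point count, and the orientation of the type-$\mathbb{A}$ quiver, so that the admissible oriented arcs of its polygon model really coincide with the oriented arcs of $A_{p,q}/\alpha$ inherited from $A_{p,q}$.
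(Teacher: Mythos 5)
Your overall strategy coincides with the paper's: treat the line-bundle case and the torsion case separately, compute $E^{\bot}$ by the Geigle--Lenzing perpendicular calculus, describe $A_{p,q}/\alpha$ topologically, and match the two sides against known geometric models. However, your torsion case contains a genuine error. For $E=S_{\infty,i}^{(j)}$ the peripheral arc $\alpha$ encloses exactly $j$ marked points of the inner boundary, and cutting along $\alpha$ produces \emph{two} components: an annulus with $p-j$ inner and $q$ outer marked points, \emph{and} a polygon with $j+2$ marked points. By the definition of $A_{p,q}/\alpha$ taken from \cite{MP2014}, only components homeomorphic to a triangle are discarded, so this polygon survives whenever $j\geq 2$. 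Correspondingly, by \cite{GL1991} one has $(S_{\infty,i}^{(j)})^{\bot}\cong {\rm coh}\mbox{-}\mathbb{X}(p-j,q)\coprod {\rm mod}(A_{j-1})$, the second factor being generated by the composition factors $S_{\infty,i-1},\dots,S_{\infty,i-j+1}$. Your identification $E^{\bot}\simeq{\rm coh}\mbox{-}\mathbb{X}(p-j,q)$, with the spanned polygon ``deleted,'' drops this factor and is incompatible with your own correct remark that $K_0(E^{\bot})$ has rank $p+q-1$: your right-hand side has rank $p+q-j$. What you prove is therefore only correct for $j=1$.

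Two smaller points. In the bridging case the disk obtained by cutting has $p+q+2$ marked points, not $p+q$: each endpoint of $\alpha$ is doubled by the cut, and $p+q+2=(p+q-1)+3$ is exactly the polygon needed to model ${\rm mod}(A_{p+q-1})$ via \cite{BS2021}. Finally, the compatibility programme you outline in your last paragraph (showing that $X\in E^{\bot}$ iff $\phi^{-1}(X)$ can be made disjoint from $\alpha$, via Theorem \ref{dimension and positive intersection}, Serre duality and a taut-representative argument) is more than the paper attempts; its proof stops at matching the two sides abstractly against the models of \cite{BS2021} and Theorem \ref{corresponding}. That extra check is a reasonable strengthening, but it is not needed for the statement as formulated, and flagging it as the ``main obstacle'' misplaces the difficulty relative to the actual gap above.
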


\begin{proof}
To prove the theorem, we consider the following two cases:
\vspace{1mm}

(1) $E\in {\rm vec}\mbox{-}\mathbb{X}(p,q).$
\vspace{1mm}

In this case, $E$ is a line bundle, hence $E^{\bot}\cong {\rm mod}(A_{p+q-1})$ by \cite[Proposition 2.14]{Len2011}. On the other hand, the marked surface $A_{p,q}/\alpha$ is a disk with $p+q+2$ marked points on its boundary. According to \cite{BS2021}, this gives a geometric model for the category ${\rm mod}(A_{p+q-1})\cong E^{\bot}$.

(2) $E\in {\rm coh}_{0}\mbox{-}\mathbb{X}(p,q).$

In this case, $E=S_{\infty,i}^{(j)}$, where $i\in\mathbb{Z}/p\mathbb{Z}$ and $1\leq j\leq p-1$; or $E=S_{0,i}^{(j)}$, where $i\in\mathbb{Z}/q\mathbb{Z}$ and $1\leq j\leq q-1$. We only consider the second case, the other one being similar.

Note that the composition factors of $S_{0,i}^{(j)}$ are given by $S_{0,i},\,S_{0,i-1},\cdots,\,\,S_{0,i-j+1}.$
According to \cite{GL1991}, $(S_{0,i}^{(j)})^{\bot}$ contains two disjoint components, one is $\{S_{0,i},\,S_{0,i-1},\cdots,\,\,S_{0,i-j+1}\}^{\bot}$, which equivalent to the weighted projective line of weight type $(p,q-j)$; the other one is $\langle S_{0,i-1},\,S_{0,i-2},\cdots,\,\,S_{0,i-j+1}\rangle$, which is equivalent to the module category of type $A_{j-1}$. Therefore,
$$(S_{0,i}^{(j)})^{\bot}\cong {\rm coh}\mbox{-}\mathbb{X}(p,q-j)\coprod {\rm mod} (A_{j-1}).$$

On the other hand, the marked surface $A_{p,q}/\alpha$ has two connected components: an annulus with $p$ marked points on the inner boundary and $q-j$ marked points on the outer boundary, and a disk with $j+2$ marked points on its boundary. Combining with \cite{BS2021}, this gives a geometric model for the category $(S_{0,i}^{(j)})^{\bot}$. We are done.
\vspace{2mm}
\end{proof}

\noindent {\bf Acknowledgements.}\quad
Jianmin Chen and Hongxia Zhang were partially supported by the National Natural Science Foundation of China
(Grant Nos. 12371040, 11971398, 12131018 and 12161141001). Shiquan Ruan was partially supported by the Natural Science Foundation of Xiamen (No. 3502Z20227184), the Natural Science Foundation of Fujian Province (No. 2022J01034), the National Natural Science Foundation of China (Nos. 12271448), and the Fundamental Research Funds for Central Universities of
China (No. 20720220043).

\vskip 5pt
\noindent {\tiny  \noindent Jianmin Chen, Shiquan Ruan and Hongxia Zhang\\
School of Mathematical Sciences, \\
Xiamen University, Xiamen, 361005, Fujian, PR China.\\
E-mails: chenjianmin@xmu.edu.cn, sqruan@xmu.edu.cn,
hxzhangxmu@163.com\\ }
\vskip 3pt
\end{document}